\theoremstyle{plain}
\newtheorem{theorem}{Theorem}[section]
\newtheorem{corollary}[theorem]{Corollary}
\newtheorem{lemma}[theorem]{Lemma}
\newtheorem{proposition}[theorem]{Proposition}
\theoremstyle{definition}
\newtheorem{definition}[theorem]{Definition}
\newtheorem{example}[theorem]{Example}
\newtheorem{construction}[theorem]{Construction}
\newtheorem*{ack}{Acknowledgement}
\theoremstyle{remark}
\newtheorem{claim}[theorem]{Claim}
\newtheorem{remark}[theorem]{Remark}
\newtheorem{setup}[theorem]{}
\newtheorem*{remark*}{Remark}
\newtheorem*{notation*}{Notation and Terminology}
\numberwithin{equation}{section}
\DeclareMathOperator{\Aut}{Aut}
\DeclareMathOperator{\Span}{Span}
\DeclareMathOperator{\Sym}{Sym}
\DeclareMathOperator{\length}{length}
\DeclareMathOperator{\rank}{rank}
\DeclareMathOperator{\rk}{rk}
\DeclareMathOperator{\GL}{GL}
\DeclareMathOperator{\coker}{coker}
\DeclareMathOperator{\Ext}{Ext}
\DeclareMathOperator{\Hom}{Hom}
\DeclareMathOperator{\id}{id}
\DeclareMathOperator{\im}{Im} 
\DeclareMathOperator{\Bs}{Bs}
\DeclareMathOperator{\Exc}{Exc}
\DeclareMathOperator{\Gr}{Gr}
\DeclareMathOperator{\Grass}{Grass}
\DeclareMathOperator{\Proj}{Proj}
\DeclareMathOperator{\Sing}{Sing}
\DeclareMathOperator{\Spec}{Spec} 
\DeclareMathOperator{\Supp}{Supp} 
\DeclareMathOperator{\ch}{ch}
\DeclareMathOperator{\cNE}{\overline{NE}} 
\DeclareMathOperator{\Hilb}{Hilb}
\DeclareMathOperator{\Quot}{Quot}
\DeclareMathOperator{\pt}{pt}
\newcommand{\Romannum}[1]{\uppercase\expandafter{\romannumeral #1}}
\newcommand{\hooklongrightarrow}{\lhook\joinrel\longrightarrow}
\renewcommand{\div}{\operatorname{div}}
\DeclarePairedDelimiter{\rdown}{\lfloor}{\rfloor}
\DeclarePairedDelimiter{\abs}{\lvert}{\rvert}
\DeclarePairedDelimiterX{\pair}[2]{\langle}{\rangle}{#1,#2}
\title{Sheaf stable pairs, Quot-schemes, and birational geometry}
\author{Caucher Birkar}
\address{Yau Mathematical Sciences Center,
Jingzhai, Tsinghua University, Haidian District,
Beijing, China, 100084}
\email{birkar@tsinghua.edu.cn}
\author{Jia Jia}
\address{Yau Mathematical Sciences Center,
Jingzhai, Tsinghua University, Haidian District,
Beijing, China, 100084}
\email{jia\_jia@u.nus.edu,mathjiajia@tsinghua.edu.cn}
\author{Artan Sheshmani}
\address{Beijing Institute of Mathematical Sciences and Applications,
No. 544, Hefangkou Village, Huaibei Town, Huairou District,
Beijing, China, 101408}
\email{artan@bimsa.cn}
\address{Massachusetts Institute of Technology (MIT), IAiFi Institute, 182 Memorial Drive, Cambridge, MA 02139, USA}
\email{artan@mit.edu}
\date{\today}
\subjclass[2020]{
	14J10, 
	14E30, 
	14H60. 
}
\keywords{sheaf stable pair, moduli space, Quot-scheme, stable minimal model}
\begin{document}

\begin{abstract}
	In this paper we build bridges between moduli theory of sheaf stable pairs on one hand
	and birational geometry on the other hand.
	We will in particular treat moduli of sheaf stable pairs on smooth projective curves in detail
	and present some calculations in low degrees.
	We will also outline problems in various directions.
\end{abstract}

\maketitle

\setcounter{tocdepth}{1}
\tableofcontents

We work over an algebraically closed field \(k\) of characteristic \(0\).

\section{Introduction}

Given an algebraic variety, the Quot-schemes parameterise flat families of quotient sheaves
with fixed numerical characteristics,
for instance Hilbert polynomial on that variety.
Hilbert and Quot schemes owe their construction to Grothendieck in \cite{GrothIV},
and later they got further developed following results of Mumford, Altman and Kleiman.
Here is a short review:

Let \(X\) be a reduced connected projective scheme over \(k\),
equipped with a very ample line bundle \(\mathcal{O}_{X}(1)\),
and let \(P\in\mathbb Q[t]\) be a numerical polynomial with rational coefficients.
As the family of semi-stable coherent sheaves on \(X\) with Hilbert polynomial \(P\) is bounded
(e.g.\@ \cite[Theorem~3.3.7, p.~78]{huybrechts2010geometry}),
there is \(m\in\mathbb{N}\), such that any such sheaf \(\mathcal{F}\) is \(m\)-regular
(\cite[Theorem~1.13, p.~623]{Kleiman}).
From \(m\)-regularity of \(\mathcal{F}\) it follows that for all \(i\geq 0\),
the sheaf \(\mathcal{F}(i+m)\) is globally generated and
\begin{equation}\label{eq:surjectivity}
	H^0(X,\mathcal{O}_{X}(i))\otimes H^0(X,\mathcal{F}(m))\longrightarrow H^0(X,\mathcal{F}(i+m)))
\end{equation}
is surjective (\cite[p.~100]{Mumford}).
In particular we can find any such \(\mathcal{F}\) among quotients of \(\mathcal{O}_{X}^{\oplus n}(-m)\),
where \(n\coloneqq P(m)\).
So we need to consider the Quot-scheme \(\Quot(\mathcal{O}_{X}^{\oplus n}(-m), P)\)
(\cite[Th\'eor\'em~3.2, p.~260]{GrothIV}).
The kernels of the quotients \(\mathcal{O}_{X}^{\oplus n}(-m)\twoheadrightarrow\mathcal{F}\)
do not have to be globally generated,
however, the family of all these kernels is bounded
(\cite[Proposition~1.2, p.~252]{GrothIV}),
and hence \(m'\)-regular for some \(m'\in\mathbb{N}\).
Therefore there is \(p\in\mathbb{N}\) such that for all \(l\geq p\),
\(\mathcal{O}_{X}^{\oplus n}(-m)\) is \(l\)-regular as well as any \(\mathcal{F}\) as above,
and the kernel of \(\mathcal{O}_{X}^{\oplus n}(-m)\twoheadrightarrow\mathcal{F}\).
Then surjectivity of maps as in \eqref{eq:surjectivity} gives us for each \(l\geq p\) a realisation of
\(\Quot(\mathcal{O}_{X}^{\oplus n}(-m), P)\) as a closed subscheme
\begin{equation}
	\Quot(\mathcal{O}_{X}^{\oplus n}(-m), P)\hooklongrightarrow \Gr(N_{l}-P(l), N_{l}),
\end{equation}
where \(N_{l}\coloneqq nP_{\mathcal{O}_{X}}(l-m)\) and \(\Gr(N_{l}-P(l), N_{l})\) is the Grassmannian of
\(N_{l}-P(l)\)-dimensional subspaces in an \(N_{l}\)-dimensional space (\cite[Lemmes~3.3 and 3.7]{GrothIV}).

\medskip

The action of \(\GL_{n}(k)\) on \(\mathcal{O}_{X}^{\oplus n}\) induces an action of
\(\GL_{n}(k)\) on
\[
	H^0(X, \mathcal{O}_{X}^{\oplus n}(l-m))
\]
where each \(n\times n\) matrix becomes a matrix of \(P_{\mathcal{O}_{X}}(l-m)\times P_{\mathcal{O}_{X}}(l-m)\)
scalar matrices (of matrices entries).\footnote{In particular, scalar \(n\times n\)-matrices are mapped to scalar matrices.}
Thus we have a right\footnote{We regard global sections of \(\mathcal{O}_{X}^{\oplus n}\) as row vectors,
	i.e.\@ the action of \(\GL_{n}(k)\) is from the right.}
action of \(\GL_{n}(k)\) on \(\Gr(N_{l}-P(l), N_{l})\).
The Pl\"ucker embedding
\begin{equation}
	\Gr(N_{l}-P(l), N_{l})\hooklongrightarrow\mathbb{P}^{M_{l}},
	\quad M_{l}=\binom{N_l}{P(l)}-1
\end{equation}
comes with a \(\GL_{n}(k)\)-linearisation of the very ample line bundle,
that is induced from the canonical \(\GL_{N_{l}}(k)\)-linearisation.
As \(\Quot(\mathcal{O}_{X}^{\oplus n}(-m), P)\subseteq\Gr(N_{l}-P(l), N_{l})\) is \(\GL_{n}(k)\)-invariant,
the induced very ample line bundle on \(\Quot(\mathcal{O}_{X}^{\oplus n}(-m), P)\) is \(\GL_{n}(k)\)-linearised
(e.g.\@ \cite[p.~101]{huybrechts2010geometry}).
This linearised ample line bundle allows one to describe the moduli space of (semi) stable coherent sheaves,
as the GIT quotient of a locus of ``\emph{(semi) stable}'' quotient sheaves cut out in the Quot-scheme.
The moduli space of coherent sheaves became an instrumental tool to study many fundamental problems
in modern algebraic geometry.
In the 1990s Le Potier studied the moduli space of coherent systems \cite{Lepotier}.
These parameterise further, the information of pairs \((V,\mathcal{F})\) composed of coherent sheaf \(\mathcal{F}\)
with fixed numerical characteristics, together with a subspace \(V\) of its space of global sections,
that is, morphisms
\[
	V\otimes \mathcal{O}_{X}\to \mathcal{F}, \quad V\subset H^0(X, \mathcal{F}),
\]
equipped with a suitable notion of stability condition associated to \((V, \mathcal{F})\).

\medskip

The current article aims at studying a particular instance of coherent systems, known as stable pairs,
with support over a fixed algebraic variety and
explores the connections between the birational geometry of the underlying variety
and the associated moduli space of stable pairs.

\begin{definition}
	Let \(Z\) be an algebraic variety.
	In this paper, a \emph{sheaf stable pair} \(\mathcal{E},s\) on \(Z\) consists of
	a torsion-free coherent sheaf \(\mathcal{E}\) and a morphism
	\[
		\mathcal{O}_Z^r\xlongrightarrow{s}\mathcal{E}
	\]
	of sheaves (of \(\mathcal{O}_{Z}\)-modules) such that
	\[
		\dim\Supp\coker(s)<\dim Z,
	\]
    where \(r=\rank(\mathcal{E})\).
\end{definition}

For simplicity, we will usually drop ``sheaf'' and
just refer to \(\mathcal{E},s\) as a stable pair \cite{pandharipande2009curve,sheshmani2016higher,lin2018pair}.

\medskip

We say two stable pairs \(\mathcal{E},s\) and \(\mathcal{G},t\) are equivalent
if there is a commutative diagram
\[
	\xymatrix{
	\mathcal{O}_Z^r \ar[r]^s \ar@{=}[d]_{\id} & \mathcal{E} \ar[d]_{\rotatebox{90}{\(\sim\)}}^{\text{isomorphism}} \\
	\mathcal{O}_{Z}^r \ar[r]_t & \mathcal{G}.
	}
\]
The equivalence class of \(\mathcal{E},s\) is denoted by \([\mathcal{E},s]\).

However, above stable pairs should not be confused with
stable pairs studied in birational geometry.
In fact, we relate above stable pairs with stable minimal models
in birational geometry \cite{birkar2022moduli}.
It is this connection that inspired this work.

Much of this paper is devoted to understanding the moduli spaces \(M_Z(r,n)\)
of stable pairs classes \([\mathcal{E},s]\) on a smooth curve \(Z\)
where \(\mathcal{E}\) is of rank \(r\) (mostly \(r=2\))
and \(\deg\mathcal{E}=n\).
Here \(\deg\mathcal{E}\) is defined in terms of \(\coker(s)\).
This case already exhibits a rich geometry.
To such \([\mathcal{E},s]\) one can associate \(X=\mathbb{P}(\mathcal{E})\to Z\)
together with divisors \(D_1,\cdots,D_r,A\).
The structure
\[
	(X,D_1+\cdots+D_r),A \longrightarrow Z
\]
is a stable minimal model over the generic point of \(Z\) but often not over the whole \(Z\).
But a birational procedure produces a stable minimal model
\[
	(X',D_1'+\cdots+D_r'),A' \longrightarrow Z.
\]

It turns out that the moduli space of the initial stable pairs \([\mathcal{E},s]\)
parametrises the \textbf{procedure} of going
from \((X,D_1+D_2),A\) to \((X',D_1'+D_2'),A'\).
This geometric picture provides a crucial tool to study the above moduli spaces.

We can now state the first result of this paper for stable sheaves on curves.
\begin{theorem}\label{thm:main_rank_2_on_curves}
	Let \(Z\) be a smooth projective curve and \(n\) be a non-negative integer.
	Then
	\begin{enumerate}[label=(\arabic*),ref=(\arabic*)]
		\item \(M_Z(r,n)\) is a smooth projective variety.
		\item \label{enu:1.2_2}
		      Consider the natural morphism \(M_Z(r,n)\xlongrightarrow{\pi}\Hilb_Z^n\)
		      sending \([\mathcal{E},s]\) to the divisor of \(\coker(s)\).
		      The fibre of \(\pi\) over \(\sum_{1}^{\ell}n_{j}q_{j}\) is isomorphic to
		      \[
			      F_1 \times \dots \times F_{\ell}
		      \]
		      where \(F_{j}\) depends only on \(r\) and \(n_{j}\)
		      (so it is independent of \(Z\) and the choice of \(\sum_{1}^{\ell}n_{j}q_{j}\)).
		\item \(F_{j}\) in \ref{enu:1.2_2} is a normal variety of dimension \(n_j(r-1)\) with Cartier canonical divisor.\footnote{there are related works by Biswas, Gangopadhyay and Sebastian in \cite{gangopadhyay2020fundamental,biswas2024infinitesimal}.}
	\end{enumerate}
\end{theorem}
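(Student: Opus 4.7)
The plan is to identify $M_Z(r,n)$ with a classical Quot scheme via duality, which makes (1) and (2) essentially formal and reduces (3) to a study of the punctual Quot scheme of $k[[t]]^r$.

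Since $Z$ is a smooth curve, every torsion-free coherent sheaf on $Z$ is locally free. In a stable pair $(\mathcal{E},s)$ the kernel of $s$ is a torsion-free subsheaf of $\mathcal{O}_Z^r$ of generic rank $0$, hence zero; so $s$ is injective with torsion cokernel $\mathcal{T}$ of length $n$. Applying $\SHom(-,\mathcal{O}_Z)$ to $0\to\mathcal{O}_Z^r\xlongrightarrow{s}\mathcal{E}\to\mathcal{T}\to 0$ and using $\SHom(\mathcal{T},\mathcal{O}_Z)=0$ together with the local freeness of $\mathcal{E}$ produces
\[
	0 \longrightarrow \mathcal{E}^\vee \xlongrightarrow{s^\vee} \mathcal{O}_Z^r \longrightarrow \mathcal{T}' \longrightarrow 0,
\]
with $\mathcal{T}'=\SExt^1(\mathcal{T},\mathcal{O}_Z)$ again torsion of length $n$. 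The equivalence relation on stable pairs corresponds exactly to equality of the subsheaves $\mathcal{E}^\vee\subset\mathcal{O}_Z^r$, and the assignment is functorial in families, so
\[
	M_Z(r,n)\;\cong\;\Quot^n(\mathcal{O}_Z^r),
\]
Grothendieck's Quot scheme of length-$n$ torsion quotients of $\mathcal{O}_Z^r$.

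Part (1) is now standard. Projectivity comes from Grothendieck's construction. Smoothness follows from deformation theory: at a quotient with kernel $\mathcal{K}$ (locally free) and cokernel $\mathcal{T}'$ (torsion of length $n$) the obstruction space $\Ext^1(\mathcal{K},\mathcal{T}')$ vanishes because $\mathcal{K}$ is locally free and $\mathcal{T}'$ has $0$-dimensional support, while the tangent space $\Hom(\mathcal{K},\mathcal{T}')$ has dimension $nr$. For (2), a torsion sheaf on $Z$ decomposes canonically as the direct sum of its localisations at its finite support; hence a surjection $\mathcal{O}_Z^r\twoheadrightarrow\mathcal{T}'$ with $\length_{q_j}\mathcal{T}'=n_j$ is equivalent to the data of independent surjections at each $q_j$, yielding
\[
	\pi^{-1}\Bigl(\sum n_j q_j\Bigr)\;\cong\;\prod_j F_j,
\]
where $F_j=\Quot^{n_j}_{q_j}(\mathcal{O}_Z^r)$ is the punctual piece. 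Since this depends only on the formal ring $\hat{\mathcal{O}}_{Z,q_j}\cong k[[t]]$, it depends only on $(r,n_j)$.

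For (3), write $F=F(r,m)$ and identify it with the closed subscheme of the affine Grassmannian of $\GL_r$ parametrising $k[[t]]$-lattices $L\subset k[[t]]^r$ of colength $m$. The dimension $m(r-1)$ is a classical cell-count in this description (and is consistent with $\dim M_Z(r,n)=nr$ and $\dim Z^{(n)}=n$). The principal obstacle will be normality and Cartier-ness of $K_F$. My preferred approach is to exhibit a small (or at least birational) flag-type resolution $\widetilde F\to F$ with $\widetilde F$ smooth, modelled on the Grothendieck resolution for $\GL_r$; normality of $F$ would then follow by Zariski's main theorem once one verifies connectedness of fibres, and the Cartier property of $K_F$ would follow from a Koszul-type computation of the dualising complex along the resolution. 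For the rank-$2$ case emphasised in the paper, one can alternatively proceed by an explicit stratification of $F(2,m)$ indexed by partitions of $m$ (from the classification of length-$m$ modules over $k[[t]]$) and check normality and the Cartier condition directly at representative points of each stratum.
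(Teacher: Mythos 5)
Your treatment of parts (1) and (2) matches the paper's: the duality $[\mathcal{E},s]\leftrightarrow(\mathcal{E}^\vee\hookrightarrow\mathcal{O}_Z^r)$ identifying $M_Z(r,n)$ with $\Quot(\mathcal{O}_Z^r,n)$, smoothness via the vanishing of $\Ext^1(\mathcal{K},\mathcal{T}')$ for $\mathcal{K}$ locally free and $\mathcal{T}'$ punctual, and the product decomposition of the fibre from the canonical decomposition of a torsion sheaf over its support (the paper phrases this as a gluing argument over an open cover, but it is the same idea). Up to there the proposal is fine.

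Part (3) has a genuine gap. Your route to normality --- produce a smooth birational model $\widetilde F\to F$ and ``conclude by Zariski's main theorem once one verifies connectedness of fibres'' --- is logically backwards: ZMT says that a proper birational morphism \emph{onto a normal variety} has connected fibres; the converse is false (the normalisation of a cuspidal curve is a bijection, hence has connected fibres, yet the curve is not normal). Normality is $R_1+S_2$, and nothing in your sketch supplies $S_2$. The paper gets $S_2$ from the observation that every fibre of $\pi$ has the expected dimension $n(r-1)$, so the fibre over $h$ is $\pi^*H_1\cap\dots\cap\pi^*H_n$ for general ample $H_i$ through $h$ in the smooth $n$-dimensional $\Hilb_Z^n$, hence a local complete intersection in the smooth $nr$-dimensional Quot scheme and therefore Cohen--Macaulay; $R_1$ is then checked by an explicit stratification of the punctual fibre by the isomorphism type of the length-$n_j$ module $L$ over $k[t]/\langle t^{n_j}\rangle$, with the open stratum ($L$ cyclic) smooth of dimension $n_j(r-1)$ and all other strata of dimension at most $n_j(r-1)-2$. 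Your rank-$2$ ``alternative'' of checking normality at representative points of each stratum has the same defect: pointwise checks along strata can give $R_1$ but not $S_2$. Finally, the Cartier property of $K_{F_j}$ is not established by your unexecuted ``Koszul-type computation along the resolution''; the paper deduces it from miracle flatness of $\pi$ (equidimensional fibres, smooth source and target) together with the Cartier-ness of $K_{M_Z(r,n)}$, which is a short and complete argument you would do well to adopt.
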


We have a more precise description of \(M_Z(2,n)\) and
the fibres of \(\pi\) in low degrees \(n\leq 3\).

\begin{theorem}\label{thm:main_m21}
	Let \(Z\) be a smooth projective curve.
	Then \(M_Z(2,1)\) is isomorphic to \(Z\times\mathbb{P}^1\).
\end{theorem}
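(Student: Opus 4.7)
The plan is to identify $M_Z(2,1)$ with the Quot scheme $\Quot(\mathcal{O}_Z^2,1)$ parametrising length-$1$ quotients of $\mathcal{O}_Z^2$, and then to use the standard identification of that Quot scheme with $\mathbb{P}(\mathcal{O}_Z^2)=Z\times\mathbb{P}^1$.

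For any $[\mathcal{E},s]\in M_Z(2,1)$, the rank-$2$ torsion-free sheaf $\mathcal{E}$ is locally free, and $s$ is injective since its kernel would be a torsion subsheaf of the torsion-free $\mathcal{O}_Z^2$; the degree condition $n=1$ then yields
\[
0\longrightarrow \mathcal{O}_Z^2 \xrightarrow{\ s\ } \mathcal{E}\longrightarrow k(q)\longrightarrow 0
\]
for a unique $q\in Z$. Dualising and using $\SExt^1(k(q),\mathcal{O}_Z)\cong k(q)$ together with $\SExt^{\geq 1}(\mathcal{E},\mathcal{O}_Z)=0$, we obtain a length-$1$ quotient
\[
0\longrightarrow \mathcal{E}^\vee \xrightarrow{\ s^\vee\ } \mathcal{O}_Z^2 \longrightarrow k(q)\longrightarrow 0,
\]
that is, a point of $\Quot(\mathcal{O}_Z^2,1)$. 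The stable-pair equivalence relation matches precisely the identification of Quot points (two surjections $\phi,\phi'$ are identified exactly when $\ker\phi=\ker\phi'$, i.e.\ when they differ by an automorphism of the target); conversely any length-$1$ quotient $\phi$ recovers a stable pair via $[(\ker\phi)^\vee,(\ker\phi\hookrightarrow\mathcal{O}_Z^2)^\vee]$, using reflexivity of vector bundles on the smooth curve. This gives a bijection $M_Z(2,1)\leftrightarrow \Quot(\mathcal{O}_Z^2,1)$ on $k$-points.

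A standard computation gives $\Quot(\mathcal{O}_Z^2,1)\cong \mathbb{P}(\mathcal{O}_Z^2)\cong Z\times\mathbb{P}^1$, since a $T$-valued point is a $T$-point $f\colon T\to Z$ together with a rank-$1$ quotient of $f^*\mathcal{O}_Z^2=\mathcal{O}_T^2$. Equivalently, the fibre of $\pi\colon M_Z(2,1)\to Z$ over $q$ is $\mathbb{P}(\Ext^1(k(q),\mathcal{O}_Z^2))$, and as $q$ varies this fibre comes from the rank-$2$ bundle $T_Z\oplus T_Z$ on $Z$ (via Serre duality $\Ext^1(k(q),\mathcal{O}_Z)\cong T_qZ$), whose projectivisation is $Z\times\mathbb{P}^1$.

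The main obstacle is promoting this pointwise bijection to a scheme-theoretic isomorphism. This requires carrying out the duality in a flat family on $\Quot(\mathcal{O}_Z^2,1)\times Z$: one must verify that the universal kernel $\mathcal{K}$ is locally free of rank $2$ and that the relative $\SExt^1$ of the universal quotient $\mathcal{T}$ assembles into a universal stable pair $\mathcal{O}^{\oplus 2}\to \mathcal{K}^\vee$. Both assertions reduce to local computations, using that the support of $\mathcal{T}$ is a relative Cartier divisor on the smooth threefold $\Quot(\mathcal{O}_Z^2,1)\times Z$. Alternatively, should the universal family on $M_Z(2,1)$ be inconvenient to handle directly, one can construct the morphism $Z\times\mathbb{P}^1\to M_Z(2,1)$ explicitly via the inverse duality, verify bijectivity on $k$-points, and conclude via Zariski's Main Theorem from the smoothness of both varieties provided by Theorem~\ref{thm:main_rank_2_on_curves}.
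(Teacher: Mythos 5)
Your proof is correct, but it takes a genuinely different route from the paper's. The paper proves \cref{thm:main_m21} geometrically in \cref{exa:degree_one}: it passes to the associated ruled surface \(X=\mathbb{P}(\mathcal{E})\) with divisors \(D_1,D_2,A\), uses \(D_1\cdot D_2=\deg\mathcal{E}=1\) to place the cokernel at a single point \(q\), and performs an elementary transformation (blow up \(D_1\cap D_2\), blow down the old fibre) to reach the trivial stable minimal model \(X'\simeq Z\times\mathbb{P}^1\) marked with a point \(p\) on the fibre over \(q\); after a three-case analysis of vertical components, the class \([\mathcal{E},s]\) is read off from the pair \((q,p)\). You instead combine the paper's own identification \(M_Z(2,1)\simeq\Quot(\mathcal{O}_Z^2,1)\) (\cref{lem:moduli_quot_scheme}, via the dualisation in \cref{lem:class_equiv_inclusion}, so there is no circularity) with the classical isomorphism \(\Quot(E,1)\simeq\mathbb{P}(E)\) for locally free \(E\). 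Your route is shorter, handles the scheme structure cleanly (either through the tautological quotient on \(\mathbb{P}(\mathcal{O}_Z^2)\times Z\) supported on the graph of the projection, which gives the morphism \(Z\times\mathbb{P}^1\to\Quot(\mathcal{O}_Z^2,1)\) needed for your Zariski Main Theorem fallback, or through the universal property directly), and generalises at once to arbitrary rank, giving \(M_Z(r,1)\simeq Z\times\mathbb{P}^{r-1}\) — compare \cref{rem:the_edge_case}, where the paper records exactly this for \(Z=\mathbb{P}^1\) by a matrix computation. What the paper's longer argument buys is the explicit dictionary between a stable pair and its stable minimal model, which is the template reused for the degree two and three computations in later sections; your argument does not produce that picture. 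One small point worth making explicit in your extension-theoretic aside: stable-pair equivalence forces only the identity on \(\mathcal{O}_Z^2\), so the induced map on \(k(q)\) is an arbitrary scalar, which is precisely why the fibre is \(\mathbb{P}(\Ext^1(k(q),\mathcal{O}_Z^2))\) rather than the affine space of extension classes.
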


\begin{theorem}\label{thm:main_m22}
	Let \(Z\) be a smooth projective curve.
	Then the fibre of
	\[
		\pi\colon M_Z(2,2)\longrightarrow\Hilb_Z^2
	\]
	over a point \(\sum_{1}^{\ell}n_{j}q_{j}\) is isomorphic to
	\[
		\begin{cases}
			\text{smooth quadric in } \mathbb{P}^3,   & \text{when } \ell=2, \\
			\text{singular quadric in } \mathbb{P}^3, & \text{when } \ell=1.
		\end{cases}
	\]
\end{theorem}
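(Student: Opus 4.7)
The strategy is to reduce both cases to a local calculation via Theorem~\ref{thm:main_rank_2_on_curves}\ref{enu:1.2_2}, which presents the fiber of $\pi$ as a product of universal local factors $F_{n_j}$ depending only on $r$ and $n_j$. When $\ell = 2$, we have $n_1 = n_2 = 1$, so the fiber is $F_1 \times F_1$; by Theorem~\ref{thm:main_m21}, $M_Z(2,1) \cong Z \times \mathbb{P}^1$ with its map to $\Hilb_Z^1 = Z$ being the first projection, hence $F_1 \cong \mathbb{P}^1$ and the fiber is $\mathbb{P}^1 \times \mathbb{P}^1$, embedded as a smooth quadric in $\mathbb{P}^3$ via the Segre map.

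When $\ell = 1$, the fiber equals the single local factor $F = F_2$, which I would compute by working in the complete local ring $R = \widehat{\mathcal{O}}_{Z,q} \cong k[[t]]$. Concretely, $F$ parametrizes isomorphism classes of short exact sequences $0 \to R^2 \xrightarrow{s} R^2 \to Q \to 0$ modulo left $\GL_2(R)$-action (the allowed isomorphisms of $\mathcal{E}$), equivalently the data of a surjection $R^2 \twoheadrightarrow Q$ modulo $\Aut(Q)$. Since $R$ is a DVR, there are exactly two length-two modules to consider: the split $Q_A = (R/\mathfrak{m}_q)^{\oplus 2}$ and the non-split $Q_B = R/\mathfrak{m}_q^2$.

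The $Q_A$-stratum is a single point because $\Aut(Q_A) = \GL_2(k)$ acts transitively on surjections $R^2 \twoheadrightarrow k^{\oplus 2}$. The $Q_B$-stratum $F^B$ consists of $(a, b) \in (R/t^2)^{\oplus 2}$ with $(a(0), b(0)) \neq (0, 0)$, modulo $\Aut(Q_B) = (R/t^2)^\times \cong k^\times \ltimes k$ acting diagonally. Writing $a = a_0 + a_1 t$, $b = b_0 + b_1 t$, a direct invariant-theoretic computation shows the basic invariants are $a_0, b_0$ of weight $1$ under the $k^\times$-factor together with the Wronskian $\Delta := a_0 b_1 - a_1 b_0$ of weight $2$ (and invariant under the additive factor). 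This identifies $F^B$ with $\mathbb{P}(1,1,2) \setminus \{[0{:}0{:}1]\}$.

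To finish, I would exhibit an explicit one-parameter degeneration from $Q_B$ to $Q_A$, such as $M_\varepsilon = \bigl(\begin{smallmatrix}t & \varepsilon \\ 0 & t\end{smallmatrix}\bigr)$, and verify that as $\varepsilon \to 0$ the associated invariants $(a_0, b_0, \Delta)$ tend toward the vertex $[0{:}0{:}1]$ of $\mathbb{P}(1,1,2)$. This gives $F \cong \mathbb{P}(1,1,2)$, which via the complete linear system $|\mathcal{O}(2)|$ embeds into $\mathbb{P}^3$ as the singular quadric $X_0 X_1 = X_2^2$ (cone over a smooth conic). The main obstacle is upgrading this set-theoretic bijection $F \leftrightarrow \mathbb{P}(1,1,2)$ to a scheme isomorphism: one has to check that the universal family over $F^B$ extends across the $Q_A$-point with the correct infinitesimal behaviour, so that the scheme structure near the vertex is indeed the $A_1$ cone singularity.
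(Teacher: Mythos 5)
Your reduction of the \(\ell=2\) case to \(F_1\times F_1\simeq\mathbb{P}^1\times\mathbb{P}^1\) is exactly the paper's route (via \cref{thm:moduli_of_product} and \cref{rem:reduced_cokernel_divisor_algebraic}). For \(\ell=1\) the paper likewise reduces to classifying length-two quotients of \((k[t]/\langle t^2\rangle)^{\oplus 2}\), but from there the two arguments diverge: the paper records each quotient with \(L=k[t]/\langle t^2\rangle\) by its kernel \(\langle(-h,e)\rangle\), normalises the generator to \((m_1+m_2t,1)\) or \((1,l_1+l_2t)\), and glues the two resulting copies of \(\mathbb{A}^2\) along \((\mathbb{A}^1\setminus\{0\})\times\mathbb{A}^1\) via \((m_1,m_2)\mapsto(1/m_1,-m_2/m_1^2)\), observing that this is literally the two-chart description of \(V(xz+y^2)\setminus\{(0{:}0{:}0{:}1)\}\). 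Your invariant-theoretic description of the \(Q_B\)-stratum as \(\mathbb{P}(1,1,2)\setminus\{[0{:}0{:}1]\}\) via \((a_0,b_0,\Delta)\) is a correct repackaging of the same computation (your weighted coordinates are, up to sign, the paper's transition functions \(-b_0/a_0\) and \(-\Delta/a_0^2\)), and it has the advantage of exhibiting the global quadric-cone structure in one stroke rather than chart by chart; your degeneration \(M_\varepsilon\) landing at \([0{:}\sqrt{\varepsilon}{:}1]\to[0{:}0{:}1]\) also checks out.

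The step you leave open --- promoting the set-theoretic bijection to a scheme isomorphism across the vertex --- is genuinely the crux, and analysing the universal family's infinitesimal behaviour at the \(Q_A\)-point by hand would be delicate. The paper closes this gap differently and more cheaply with \cref{lem:fibre_normal}: the fibre over \(2q\) is cut out in the smooth fourfold \(\Quot(\mathcal{O}_Z^2,2)\) by the pullbacks of two general hyperplanes through \(2q\), and since all nearby fibres have dimension \(2\) this intersection has the expected codimension, hence is a local complete intersection and Cohen--Macaulay; combined with the stratification of \cref{rem:mq_smooth_cod_one} showing the fibre is smooth in codimension one, Serre's criterion gives normality. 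Once both the fibre and \(V(xz+y^2)\) are known to be normal projective surfaces isomorphic away from a single point on each side, the isomorphism extends automatically, and no analysis of the scheme structure at the vertex is required. You should import that lemma (or reprove the lci/normality argument) to finish; as written, your proof is incomplete at exactly the point you flag.
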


\begin{theorem}\label{thm:main_m23}
	Let \(Z\) be a smooth projective curve.
	Then the fibre of
	\[
		\pi\colon M_Z(2,3) \longrightarrow \Hilb_Z^3
	\]
	over a point \(\sum_{1}^{\ell}n_{j}q_{j}\) is isomorphic to
	\[
		\begin{cases}
			\mathbb{P}^1\times\mathbb{P}^1\times\mathbb{P}^1, & \text{when } \ell=3, \\
			\mathbb{P}^1\times \text{singular quadric},       & \text{when } \ell=2, \\
			F_3,                                              & \text{when } \ell=1
		\end{cases}
	\]
	where \(F_3\) is a \(\mathbb{Q}\)-factorial Fano \(3\)-fold of Picard number one
	with canonical singularities
	along a copy of \(\mathbb{P}^1\).
	Moreover, \(F_3\) is birational to \(\mathbb{P}^3\).
\end{theorem}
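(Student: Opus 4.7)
The plan is to combine Theorem~\ref{thm:main_rank_2_on_curves}\ref{enu:1.2_2} with the established descriptions of $F_1$ and $F_2$, then carry out an explicit local study of the remaining universal factor $F_3$. The partitions of $n=3$ are $1+1+1$, $1+2$, and $3$, producing the product fibres $F_1^{\times 3}$, $F_1\times F_2$, and $F_3$. By Theorem~\ref{thm:main_m21}, $F_1\cong\mathbb{P}^1$, and by Theorem~\ref{thm:main_m22}, $F_2$ is a singular quadric in $\mathbb{P}^3$; these handle the cases $\ell=3$ and $\ell=2$ directly, so the real content is the identification of $F_3$.

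For $F_3$, I would work locally. Setting $R=\mathcal{O}_{Z,q}$ with uniformiser $t$, a stable pair whose cokernel has length $3$ at $q$ is represented---after a choice of basis of $\mathcal{E}_q\cong R^2$---by a $2\times 2$ matrix $A\in M_2(R)$ with $v(\det A)=3$, modulo left multiplication by $GL_2(R)$. A direct computation shows that the equivalence class depends only on $A\bmod t^3$: given $A'=A+t^3B$, the element $U=I+t^3BA^{-1}$ lies in $GL_2(R)$ and satisfies $UA=A'$. Equivalently, dualising $0\to\mathcal{O}_Z^2\to\mathcal{E}\to\coker(s)\to 0$ identifies $F_3$ with the punctual Quot-scheme $\Quot^3_q(\mathcal{O}_Z^2)$.

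Next, stratify $F_3$ by the $R$-isomorphism class of $\coker(s)$. Only the cyclic type $R/t^3$ and the type $R/t\oplus R/t^2$ occur, since the totally split type $(R/t)^{\oplus 3}$ is ruled out by the fact that $\coker(s)$ is $2$-generated. A $GL_2(R)$-row reduction puts $A$ in upper-triangular normal form on each stratum. On the cyclic stratum, the normal form $\bigl(\begin{smallmatrix}1 & \alpha\\ 0 & t^3\end{smallmatrix}\bigr)$ with $\alpha\in R/t^3$ gives a dense open $\mathbb{A}^3\subset F_3$, already exhibiting the birational equivalence $F_3\dashrightarrow\mathbb{P}^3$. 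On the split stratum, the normal form $\bigl(\begin{smallmatrix}t & \alpha\\ 0 & t^2\end{smallmatrix}\bigr)$ modulo its residual centraliser collapses to a closed $\mathbb{P}^1\subset F_3$.

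The main remaining task---and in my view the principal obstacle---is to verify that $F_3$, assembled from these local parameters, satisfies the claimed global properties: $\mathbb{Q}$-factorial, Fano, Picard number one, with canonical singularities precisely along the $\mathbb{P}^1$-stratum. My plan is to identify $F_3$ with the affine Schubert variety $\overline{\Gr_{(3,0)}}$ in the $GL_2$ affine Grassmannian: a lattice $L\subseteq R^2$ with $[R^2:L]=3$ is precisely a point of this Schubert variety, and the two strata above correspond to the Schubert cells $\Gr_{(3,0)}$ and $\Gr_{(2,1)}$. Standard results on affine Schubert varieties then supply projectivity, normality, $\mathbb{Q}$-factoriality, Picard rank one, and rational (hence canonical) singularities. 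The Fano property and the identification of the singular locus with the closed $\mathbb{P}^1$ can be extracted either from this general theory or, alternatively, from a self-contained small resolution obtained by blowing up the closed $\mathbb{P}^1$; the transverse slice is expected to be a Du Val surface singularity, and the discrepancy computation along this resolution is the hands-on step that verifies the canonical condition.
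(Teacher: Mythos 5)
The reduction via Theorem~\ref{thm:main_rank_2_on_curves}\ref{enu:1.2_2} and the identification of the $\ell=3$ and $\ell=2$ fibres is exactly what the paper does, and your local description of $F_3$ --- quotients of $(k[t]/\langle t^3\rangle)^{\oplus 2}$ of length $3$, the two module types $k[t]/\langle t^3\rangle$ and $k[t]/\langle t^2\rangle\oplus k$, the dense $\mathbb{A}^3$ chart giving birationality to $\mathbb{P}^3$, and the closed $\mathbb{P}^1$ stratum --- also matches Sections 8.1--8.2. The gap is that everything after that, which is the actual content of the theorem, is not proved. The properties ``$\mathbb{Q}$-factorial, Fano, Picard number one, canonical singularities along the $\mathbb{P}^1$'' are \emph{not} supplied by general affine Schubert variety theory. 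What that theory gives for $\overline{\Gr_{(3,0)}}$ is projectivity, normality, Cohen--Macaulayness and rational singularities; it does not give $\mathbb{Q}$-factoriality, Picard rank one, or ampleness of $-K$, and plenty of Schubert varieties fail each of these. The paper has to work for these facts: it writes the transition between the two $\mathbb{A}^3$ charts as an explicit degree-$3$ Cremona-type involution $\varphi$ of $\mathbb{P}^3$, computes log discrepancies of the contracted hyperplanes, extracts the divisor $A_1$ to get a Fano-type $W$ of Picard number two, determines the effective cone and shows $\kappa(H_4^{\sim})=0$, runs an MMP contracting $H_4^{\sim}$ to a $\mathbb{Q}$-factorial variety $T$ of Picard number one, and identifies $T$ with $F_3$ by matching big open subsets. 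None of this is replaced by a citation in your plan. (The canonical condition alone could be salvaged cheaply: rational singularities plus $K_{F_3}$ Cartier --- the latter from Theorem~\ref{thm:main_rank_2_on_curves}(3) --- implies canonical; and the paper independently exhibits the local equation $a^3=g(b-ac)$, a $cA_2$ point.)

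Your proposed fallback is also internally inconsistent: a ``small resolution obtained by blowing up the closed $\mathbb{P}^1$'' cannot exist. Blowing up a (non-Cartier) curve in a normal threefold always produces an exceptional divisor, so it is never small; and if $F_3$ admitted a genuine small resolution it could not be $\mathbb{Q}$-factorial, which is precisely one of the properties you are trying to establish. In fact the transverse slice along the singular $\mathbb{P}^1$ is the $A_2$ singularity $xy=z^3$, whose analytic local class group is torsion, which is consistent with --- but does not by itself prove --- the global $\mathbb{Q}$-factoriality and Picard-number-one statements. To close the argument you would either have to reproduce something like the paper's MMP computation or find a genuinely citable source for the divisor class group and the ample cone of $\overline{\Gr_{(3,0)}}$; as written, the proposal establishes the easy two-thirds of the theorem and defers the hard third to references that do not contain it.
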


We will give an explicit construction of \(F_3\) from \(\mathbb{P}^3\).
\medskip

The present article is only the beginning of a long term project on sheaf stable pairs and birational geometry.
There are various directions to explore. We outline some of these:
\begin{itemize}
	\item Study \(M_Z(2,n)\) over \(Z=\mathbb{P}^1\) for degrees \(n\geq 4\);
	\item Study \(M_Z(r,n)\) over \(Z=\mathbb{P}^1\) for higher ranks \(r\geq 3\);
	\item Study \(M_Z(r,n)\) over curves \(Z\) of genus \(g(Z)\geq 1\);
	\item Study \(M_Z(\ch)\) over higher dimensional bases \(Z\) with fixed Chern character \(\ch\);
	\item Use techniques of enumerative geometry to get results in birational geometry.
\end{itemize}

Each direction exhibits its own challenges.
Overall, the above program will enrich both birational geometry and enumerative geometry.
We believe that deeper connections between the two fields become more apparent
when one studies the above moduli spaces over higher dimensional bases.
In this article, we mainly apply birational geometry to understand these moduli spaces over curves.
But in the higher dimensional case one might be able to go in the opposite direction as well
and relate invariants in the two fields.

Finally, for completeness of this discussion
we say a few words about our investigation of \(M_Z(2,2)\) over a smooth curve \(Z\).
Hope this helps to see how birational geometry comes into the picture.
Assume  \([\mathcal{E},s]\in M_Z(2,2)\).
As stated above, we can associate a model
\[
	X, D_1,D_2, A \longrightarrow Z
\]
and from this we can get a stable minimal model
\[
	(X',B'), A'\longrightarrow Z.
\]

There are two main cases to consider:
one is when the cokernel divisor is reduced and the other is when the cokernel divisor is non-reduced.

Assume first that \(\mathcal{E},s\) has reduced cokernel divisor \(Q=q_1+q_2\).
The two divisors \(D_1,D_2\) intersect at two distinct points, one over each \(q_i\).
To go from \(X\) to \(X'\) it is enough to blowup these points and then blow down two curves over the \(q_i\).
Already from this picture one can guess that the classes \(\mathcal{E},s\) with fixed \(q_1,q_2\)
are parametrised by \(\mathbb{P}^1\times \mathbb{P}^1\).

Now assume the cokernel divisor is non-reduced, say \(Q=2q\).
This is the more complicated case.
In this case, \(D_1\cdot D_2=2\) and the intersection points are over \(q\).
There are three subcases to be considered.

\textbf{Case \Romannum{1}:}
The fibre \(F\) over \(q\) is a component of both \(D_1,D_2\).
In this case, \(X=X'\) and \(D_{i}'\) is the horizontal part of \(D_{i}\) (similarly for \(A\)).

\textbf{Case \Romannum{2}:}
\(F\) is not a component of \(D_1,D_2,A\).
Then \(D_1\) and \(D_2\) are tangent to each other at some point over \(q\).
Then the stable minimal model is obtained as in the following picture:

\begin{center}
	\begin{tikzpicture}[x=0.75pt,y=0.75pt,yscale=-0.75,xscale=0.75]
		\draw (396,3) .. controls (446,27) and (520.6,-21) .. (558,13);
		\draw (381,107) .. controls (431,131) and (505.6,83) .. (543,117);
		\draw (396,3) -- (381,107);
		\draw (558,13) -- (543,117);
		\draw (394,23) .. controls (433,49) and (503.6,17) .. (554.6,29);
		\draw (559.6,16) node [anchor=north west][inner sep=0.75pt] [xscale=0.75,yscale=0.75] {\(D_{1}^{\sim}\)};
		\draw (391,45) .. controls (429.6,70) and (500.6,39) .. (551.6,50);
		\draw (555.54,40) node [anchor=north west][inner sep=0.75pt] [xscale=0.75,yscale=0.75] {\(A^{\sim}\)};
		\draw (388,63) .. controls (426,89) and (498,58) .. (549,69);
		\draw (554,62) node [anchor=north west][inner sep=0.75pt] [xscale=0.75,yscale=0.75] {\(D_{2}^{\sim}\)};
		\draw [color={rgb, 255:red, 144; green, 19; blue, 254}, draw opacity=1]   (482,14) -- (467.76,91.7);
		\draw (488,5) node [anchor=north west][inner sep=0.75pt] [xscale=0.75,yscale=0.75] {\(G\)};
		\draw [color={rgb, 255:red, 245; green, 166; blue, 35}, draw opacity=1]   (480,78) -- (444.2,90);
		\draw (482,74) node [anchor=north west][inner sep=0.75pt] [color={rgb, 255:red, 245; green, 166; blue, 35 }, opacity=1, xscale=0.75, yscale=0.75] {\(E^{\sim}\)};
		\draw    (464,106) -- (451,82.58);
		\draw (459,109) node [anchor=north west][inner sep=0.75pt] [xscale=0.75,yscale=0.75] {\(F^{\sim}\)};

		\draw (1,293) node [anchor=north west][inner sep=0.75pt] [xscale=0.75,yscale=0.75] {\(X\)};
		\draw (18,176) .. controls (68,200) and (142,152) .. (179,186);
		\draw (3,280) .. controls (53,304) and (127,256) .. (164,290);
		\draw (18,176) -- (3,280);
		\draw (179,186) -- (164,290);
		\draw (5.71,256.29) .. controls (61.59,225.82) and (155,232.53) .. (168.31,262.2);
		\draw (177,215.5) node [anchor=north west][inner sep=0.75pt] [xscale=0.75,yscale=0.75] {\(D_{1}\)};
		\draw (12,218) .. controls (36.5,232) and (126.5,249) .. (175,218);
		\draw (172,258) node [anchor=north west][inner sep=0.75pt] [xscale=0.75,yscale=0.75] {\(D_{2}\)};
		\draw (9,235) .. controls (30,238) and (138,231) .. (171,244.4);
		\draw (175,235) node [anchor=north west][inner sep=0.75pt] [xscale=0.75,yscale=0.75] {\(A\)};
		\draw (108,186) -- (88,277);
		\draw (89,259) node [anchor=north west][inner sep=0.75pt] [xscale=0.75,yscale=0.75] {\(F\)};
		\draw  [fill={rgb, 255:red, 0; green, 0; blue, 0 }, fill opacity=1] (95,235.5) .. controls (95,234.4) and (96,233.5) .. (97,233.5) .. controls (98,233.5) and (99,234.4) .. (99,235.5) .. controls (99,237) and (98,237.5) .. (97,237.5) .. controls (96,237.5) and (95,237) .. (95,235.5) -- cycle;

		\draw (438,276) node [anchor=north west][inner sep=0.75pt] [xscale=0.75,yscale=0.75] {\(X'\)};
		\draw (476,178) .. controls (526,202) and (600.6,154) .. (638,188);
		\draw (461,282) .. controls (511,306) and (585.6,258) .. (623,292);
		\draw (476,178) -- (461,282);
		\draw (638,188) -- (623,292);
		\draw (474,200) .. controls (512,225) and (583,194) .. (634,205);
		\draw (639.6,191) node [anchor=north west][inner sep=0.75pt] [xscale=0.75,yscale=0.75] {\(D_{1} =D_{1}^{\sim}\)};
		\draw (471,221) .. controls (509,247) and (580,216) .. (631,227);
		\draw (637,217) node [anchor=north west][inner sep=0.75pt] [xscale=0.75,yscale=0.75] {\(A'=A^{\sim}\)};
		\draw (468,242.64) .. controls (506,268) and (577,236.89) .. (628,248);
		\draw (635.5,242.84) node [anchor=north west][inner sep=0.75pt] [xscale=0.75,yscale=0.75]	{\(D_{2}'=D_{2}^{\sim}\)};
		\draw [color={rgb, 255:red, 144; green, 19; blue, 254}, draw opacity=1]   (562,191) -- (542,282);
		\draw (541.87,289.84) node [anchor=north west][inner sep=0.75pt] [color={rgb, 255:red, 144; green, 19; blue, 254 }, opacity=1, xscale=0.75, yscale=0.75] {\(F'\)};
		\draw  [fill={rgb, 255:red, 0; green, 0; blue, 0}, fill opacity=1] (544,262) .. controls (544,261) and (545,260) .. (546,260) .. controls (547,260) and (548,261) .. (548,262) .. controls (548,263) and (547,263.97) .. (546,263.97) .. controls (545,263.97) and (544,263) .. (544,262) -- cycle;
		\draw (554.6,256) node [anchor=north west][inner sep=0.75pt] [xscale=0.75,yscale=0.75] {\(p\)};

		\draw (238,367) .. controls (288,391) and (363,343) .. (399.6,377);
		\draw (356,371) node [anchor=north west][inner sep=0.75pt] [xscale=0.75,yscale=0.75] {\(Z\)};
		\draw  [fill={rgb, 255:red, 0; green, 0; blue, 0}, fill opacity=1] (308,370) .. controls (308,369) and (309,368) .. (309.69,368) .. controls (310.8,368) and (312,369) .. (312,370) .. controls (312,371) and (310.8,372) .. (309.69,372) .. controls (309,372) and (308,371) .. (308,370) -- cycle;
		\draw (303,343.54) node [anchor=north west][inner sep=0.75pt] [xscale=0.75,yscale=0.75] {\(q\)};

		\draw (156,23) .. controls (206,47) and (280,-1) .. (317,33);
		\draw (141,127) .. controls (191,151) and (265,103) .. (302,137);
		\draw (156,23) -- (141,127);
		\draw (317,33) -- (302,137);
		\draw (145,98) .. controls (181,98) and (286.87,62) .. (312.87,66);
		\draw (319,55) node [anchor=north west][inner sep=0.75pt] [xscale=0.75,yscale=0.75] {\(D_{1}^{\sim}\)};
		\draw (148.2,81) .. controls (185,86) and (289,72) .. (309,90);
		\draw (314.8,83) node [anchor=north west][inner sep=0.75pt] [xscale=0.75,yscale=0.75] {\(A^{\sim}\)};
		\draw (149,66.7) .. controls (197,53) and (254.64,97) .. (306.64,110);
		\draw (310.8,106) node [anchor=north west][inner sep=0.75pt] [xscale=0.75,yscale=0.75] {\(D_{2}^{\sim}\)};
		\draw [color={rgb, 255:red, 245; green, 166; blue, 35}, draw opacity=1]   (243,32) -- (223,123);
		\draw (245,40) node [anchor=north west][inner sep=0.75pt] [color={rgb, 255:red, 245; green, 166; blue, 35 }, opacity=1, xscale=0.75, yscale=0.75] {\(E\)};
		\draw (220,97) -- (256,117);
		\draw (258,106) node [anchor=north west][inner sep=0.75pt] [xscale=0.75,yscale=0.75] {\(F^{\sim}\)};

		\draw (183,300) -- (231,348);
		\draw [shift={(233,349.64)}, rotate = 225.1] [color={rgb, 255:red, 0; green, 0; blue, 0}][line width=0.75] (11,-3) .. controls (7,-1) and (3,0) .. (0,0) .. controls (3,0) and (7,1) .. (11,3);
		\draw (188.93,324.54) node [anchor=north west][inner sep=0.75pt] [xscale=0.75,yscale=0.75] {\(f\)};
		\draw (502,126.97) -- (515,173);
		\draw [shift={(515.2,175)}, rotate = 254] [color={rgb, 255:red, 0; green, 0; blue, 0}][line width=0.75] (11,-3) .. controls (7,-1) and (3,0) .. (0,0) .. controls (3,0) and (7,1) .. (11,3);
		\draw (516,129) node [anchor=north west][inner sep=0.75pt] [xscale=0.75,yscale=0.75] {blowdown of \(F^{\sim}\) and then \(E^{\sim}\)};
		\draw (212,145) -- (185,175);
		\draw [shift={(183,176.64)}, rotate = 312] [color={rgb, 255:red, 0; green, 0; blue, 0}][line width=0.75] (11,-3) .. controls (7,-1) and (3,0) .. (0,0) .. controls (3,0) and (7,1) .. (11,3);
		\draw (472,301) -- (424,347.57);
		\draw [shift={(422.93,349)}, rotate = 316] [color={rgb, 255:red, 0; green, 0; blue, 0}][line width=0.75] (11,-3) .. controls (7,-1) and (3,0) .. (0,0) .. controls (3,0) and (7,1) .. (11,3);
		\draw (455.6,327.87) node [anchor=north west][inner sep=0.75pt] [xscale=0.75,yscale=0.75] {\(f'\)};
		\draw (375,30) -- (337,38.2);
		\draw [shift={(335,38.64)}, rotate = 347] [color={rgb, 255:red, 0; green, 0; blue, 0}][line width=0.75] (11,-3) .. controls (7,-1) and (3,0) .. (0,0) .. controls (3,0) and (7,1) .. (11,3);

		\draw (553,311) node [anchor=north west][inner sep=0.75pt] [xscale=0.75,yscale=0.75] {$ \begin{array}{l}
					F'=G^{\sim} \\
					p=\text{image of } E^{\sim}
				\end{array}$};
	\end{tikzpicture}
\end{center}

\textbf{Case \Romannum{3}:}
\(F\) is a component of one of \(D_1,D_2,A\).
Say \(F\) is a component of \(D_1\).
Then \(D_2,A\) are tangent, and
the stable minimal model is obtained by a similar but slightly different process.
One then considers the case when \(F\) is a component of \(D_2\) (resp.\ \(A\)), etc.

\medskip
The above arguments make it clear that the fibres of \(M_Z(2,2)\to \Hilb_Z^2\) are independent of the genus of \(Z\),
that is, the fibre only depends on whether the cokernel divisor is reduced or not
(and the same arguments apply even if \(Z\) is not projective).
In fact, it is enough to work in a formal neighbourhood of the cokernel divisor.
From this one can reduce the calculation of the fibres to a local problem.

To make the story short, in the non-reduced cokernel case one is reduced to classifying all quotients
\[
	k[t]/\langle t^2 \rangle\oplus k[t]/\langle t^2 \rangle \longrightarrow L
\]
where \(L\) is a \(k[t]/\langle t^2 \rangle\)-module of length \(2\).
Such \(L\) is either \(k\oplus k\) or \(k[t]/\langle t^2\rangle\).
Some careful calculations show that such quotients are parametrised by a singular quadric in \(\mathbb{P}^3\).

But to investigate fibres in the degree 3 case we need to borrow more sophisticated tools from birational geometry.

\begin{ack}
	The first author is supported by a grant from Tsinghua University and
	a grant of the National Program of Overseas High Level Talent.
	The second author is supported by Shuimu Tsinghua Scholar Program and
	China Postdoctoral Science Foundation (2023TQ0172).
	The third author would like to thank Vladimir Baranovsky for helpful discussions. He is supported by a grant from Beijing Institute of Mathematical Sciences and Applications (BIMSA),
	and by the National Program of Overseas High Level Talent.
\end{ack}

\section{Preliminaries}

We work over an algebraically closed field \(k\) of characteristic zero.
All varieties and schemes are defined over \(k\) unless stated otherwise.
Varieties are assumed to be irreducible.

\subsection{Contractions}

By a \emph{contraction} we mean a projective morphism \(f\colon X\to Y\) of schemes such that
\(f_{*}\mathcal{O}_{X}=\mathcal{O}_{Y}\)
In particular, \(f\) is surjective and has connected fibres.

We say that a birational map \(f\colon X\dashrightarrow Y\) \emph{contracts} a divisor \(D\subset X\)
if \(f\) is defined at the generic point of \(D\) and \(f(D)\subset Y\) has codimension at least \(2\).
The map \(f\) is called a \emph{birational contraction} if \(f^{-1}\) does not contract any divisor.

\subsection{Pairs and Singularities}

Let \(X\) be a pure dimensional scheme of finite type over \(k\) and
let \(M\) be a \(\mathbb{Q}\)-divisor on \(X\).
We denote the coefficient of a prime divisor \(D\) in \(M\) by \(\mu_DM\).

A \emph{pair} \((X,B)\) consists of a normal quasi-projective variety \(X\)
and a \(\mathbb{Q}\)-divisor \(B\geq 0\) such that \(K_X+B\) is \(\mathbb{Q}\)-Cartier.
We call \(B\) the \emph{boundary divisor}.

Let \(\phi\colon W\to X\) be a log resolution of a pair \((X,B)\).
Let \(K_W+B_W\) be the pullback of \(K_X+B\).
The \emph{log discrepancy} of a prime divisor \(D\) on \(W\)
with respect to \((X,B)\) is defined as
\[
	a(D,X,B)\coloneqq 1-\mu_D B_W.
\]
A \emph{non-klt} place of \((X,B)\) is a prime divisor \(D\) over \(X\),
that is, on birational models of \(X\), such that \(a(D,X,B)\leq 0\),
and a \emph{non-klt centre} is the image of such a \(D\) on \(X\).
We say \((X,B)\) is lc (resp.\ klt) if \(a(D,X,B)\) is \(\geq 0\) (resp.\ \(>0\)) for every \(D\).
This means that every coefficient of \(B_W\) is  \(\leq 1\) (resp.\ \(<1\)).

A \emph{log smooth} pair is a pair \((X,B)\) where \(X\) is smooth and
\(\Supp B\) has simple normal crossing singularities.

\subsection{Base locus}

Let \(\mathcal{L}\) be an invertible sheaf on a scheme \(X\).
The \emph{base locus} of \(\mathcal{L}\) is
\[
	\Bs(\mathcal{L})=\{x\in X \mid t(x)=0, \forall t\in H^{0}(X,\mathcal{L})\}.
\]
If \(H^{0}(X,\mathcal{L})=0\), by convention, \(\Bs(\mathcal{L})=X\).

\subsection{Types of Models}

Let \((X,B)\) be an lc pair.

Let \(X\to Z\) be a contraction to a normal variety and
assume that \(K_X+B\) is big over \(Z\).
We say that the \emph{log canonical model} of \((X,B)\) over \(Z\) exists
if there is a birational contraction \(\varphi\colon X\dashrightarrow Y\) where
\begin{itemize}
	\item \(Y\) is normal and projective over \(Z\);
	\item \(K_Y+B_Y\coloneqq \varphi_*(K_X+B)\) is ample over \(Z\); and
	\item \(\alpha^*(K_X+B)\geq \beta^*(K_Y+B_Y)\) for any common resolution
	      \[
		      \xymatrix{
			      & W \ar[ld]_{\alpha} \ar[rd]^{\beta} \\
			      X \ar@{-->}[rr]_{\varphi} & & Y.
		      }
	      \]
\end{itemize}
We call \((Y,B_Y)\) the log canonical model of \((X,B)\) over \(Z\).

\medskip

A \emph{dlt model} of an lc pair \((X,B)\) is a pair \((X',B')\)
with a projective birational morphism \(\psi\colon X'\to X\)
such that
\begin{itemize}
	\item \((X',B')\) is dlt;
	\item every exceptional prime divisor of \(\psi\) appears in \(B'\) with coefficients one;
	\item \(K_{X'}+B'=\psi^{*}(K_X+B)\).
\end{itemize}

\medskip

We say that a pair \((Y,B_Y)\) is a \emph{log minimal model} of an lc pair \((X,B)\) if
there exists a birational contraction \(\phi\colon X\dashrightarrow Y\) such that
\begin{itemize}
	\item \((Y,B_Y)\) is \(\mathbb{Q}\)-factorial dlt;
	\item \(K_Y+B_Y\) is nef; and
	\item \(\phi\) is \(K_X+B\)-negative,
	      i.e., for any prime divisor \(D\) on \(X\) which is exceptional over \(Y\),
	      we have \(a(D,X,B)<a(D,Y,B_Y)\).
\end{itemize}
A log minimal model \((Y,B_Y)\) is \emph{good} if \(K_Y+B_Y\) is semi-ample.

\subsection{Stratification}

Let \(X\) be a scheme.
A \emph{stratification} of \(X\) consists of a set of finitely many locally closed subschemes
\(X_{1}, \dots, X_{n}\) of \(X\), called strata, pairwise disjoint and
such that \(X=\bigcup_1^n X_i\),
i.e., such that we have a surjective morphism \(\coprod_1^n X_i\to X\).

\section{Higher rank sheaf stable pairs}

\subsection{Stability}

Let \(X\) be a projective variety of dimension \(d\),
let \(\mathcal{O}_{X}(1)\) be a very ample invertible sheaf on \(X\).
Denote by \(P_{\mathcal{F}}\) the \emph{Hilbert polynomial} of a coherent sheaf \(\mathcal{F}\) on \(X\).

\begin{definition}[{\cite[Definition~2.6]{sheshmani2016higher}}]\label{def:tau_stable}
	Let \(q(m)\) be given by a polynomial with rational coefficients
	such that its leading coefficient is positive.
	A pair \(\mathcal{O}_{X}^r\xrightarrow{\phi}\mathcal{F}\), where \(\mathcal{F}\) is a pure sheaf,
	is \emph{\(\tau'\)-stable} (resp.\ \emph{\(\tau'\)-semi-stable})
	with respect to (stability parameter) \(q(m)\) if
	\begin{enumerate}[leftmargin=*]
		\item for all proper non-zero subsheaves \(\mathcal{G}\subseteq\mathcal{F}\) for which
		      \(\phi\) does not factor through \(\mathcal{G}\) we have
		      \[
			      \frac{P_{\mathcal{G}}}{\rk(\mathcal{G})} < \frac{P_{\mathcal{F}}+q(m)}{\rk(\mathcal{F})}, \quad \text{resp.\ } (\leq)
		      \]
		\item for all proper subsheaves \(\mathcal{G}\subseteq\mathcal{F}\) for which
		      \(\phi\) factors through
		      \[
			      \frac{P_{\mathcal{G}}+q(m)}{\rk(\mathcal{G})}<\frac{P_{\mathcal{F}}+q(m)}{\rk(\mathcal{F})}, \quad \text{resp.\ } (\leq).
		      \]
	\end{enumerate}
\end{definition}

We consider the stability condition when \(q(m)\to \infty\).

\begin{definition}
	Fix \(q(m)\) to be given as a polynomial of degree at least \(d+1\) with rational coefficients
	such that its leading coefficient is positive.
	A pair \(\mathcal{O}_X^r\xrightarrow{\phi}\mathcal{F}\) is called to be \emph{\(\tau'\)-limit-stable}
	(resp.\ \(\tau'\)-limit-semi-stable)
	if it is stable (resp.\ semi-stable) in the sense of \cref{def:tau_stable}
	with respect to this fixed choice of \(q(m)\).
\end{definition}

\begin{lemma}[{\cite[Lemma~2.7]{sheshmani2016higher}}]\label{lem:stable_equivalent_condition}
	Fix \(q(m)\) to be given as a polynomial of at least degree \(d+1\) with rational coefficients
	such that its leading coefficient is positive.
	Then stability and semi-stability coincide.
	A pair \(\mathcal{O}_X^r\xrightarrow{\phi}\mathcal{F}\) is \(\tau'\)-limit-stable
	if and only if \(\coker(\phi)\) is a sheaf with at most \(d-1\)-dimensional support,
	i.e., \(\coker(\phi)\) is a torsion sheaf.
\end{lemma}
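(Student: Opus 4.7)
The plan is to reduce both stability conditions in Definition~3.1 to numerical comparisons of polynomial degrees and leading coefficients, and then observe that purity of $\mathcal{F}$ together with the hypothesis $\deg q\geq d+1$ rigidly constrains when these inequalities can hold.

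I would first fix the convention that an inequality of polynomials in $m$ means an inequality for $m\gg 0$, equivalently a lexicographic inequality on leading coefficients from top degree down. Since $\deg P_{\mathcal{G}}, \deg P_{\mathcal{F}}\leq d$ while $q(m)$ has degree $\geq d+1$ with positive leading coefficient, any term involving $q$ contributes in a degree that strictly dominates any pure Hilbert-polynomial term. In particular, condition (1) becomes automatic: for any proper nonzero $\mathcal{G}\subseteq \mathcal{F}$ through which $\phi$ does not factor, the left-hand side has degree $\leq d$ while the right-hand side $(P_{\mathcal{F}}+q)/\rk(\mathcal{F})$ is a polynomial of degree exactly $d+1$ with positive leading coefficient, so (1) holds regardless of $\mathcal{G}$ and $\phi$.

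For condition (2), the main computation I would carry out is to rewrite the difference of the two sides as
\[
\frac{P_{\mathcal{F}}+q}{\rk(\mathcal{F})} - \frac{P_{\mathcal{G}}+q}{\rk(\mathcal{G})} = \left(\frac{P_{\mathcal{F}}}{\rk(\mathcal{F})} - \frac{P_{\mathcal{G}}}{\rk(\mathcal{G})}\right) + q(m)\cdot\frac{\rk(\mathcal{G})-\rk(\mathcal{F})}{\rk(\mathcal{G})\rk(\mathcal{F})}.
\]
Purity of $\mathcal{F}$ ensures that every nonzero subsheaf $\mathcal{G}$ satisfies $0<\rk(\mathcal{G})\leq \rk(\mathcal{F})$. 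If $\rk(\mathcal{G})<\rk(\mathcal{F})$ then the $q$-term is a polynomial of degree $d+1$ with \emph{negative} leading coefficient, so the entire expression is eventually negative and (2) is violated. Hence (2) forces $\rk(\mathcal{G})=\rk(\mathcal{F})$ for every proper $\mathcal{G}\supseteq \im\phi$. Applied to $\mathcal{G}=\im\phi$ in the non-surjective case, this means $\rk(\im\phi)=\rk(\mathcal{F})$, equivalently $\coker\phi$ is torsion (support of dimension $\leq d-1$). Conversely, if $\coker\phi$ is torsion then every proper $\mathcal{G}\supseteq \im\phi$ automatically has full rank, the $q$-terms in (2) cancel, and the inequality reduces to $P_{\mathcal{G}}<P_{\mathcal{F}}$, which holds because $P_{\mathcal{F}/\mathcal{G}}$ has positive leading coefficient whenever $\mathcal{F}/\mathcal{G}\neq 0$.

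Finally, I would observe that replacing $<$ by $\leq$ throughout does not change the conclusion: in each relevant case above, the inequality actually established is strict by a leading-coefficient comparison, so $\tau'$-stability and $\tau'$-semistability coincide. No serious obstacle is expected; the whole argument is bookkeeping in polynomial degrees, and the only point requiring mild care is tracking the sign of $q(m)(\rk(\mathcal{G})-\rk(\mathcal{F}))/(\rk(\mathcal{G})\rk(\mathcal{F}))$ in the key case $\rk(\mathcal{G})<\rk(\mathcal{F})$.
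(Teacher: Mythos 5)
Your proof is correct and takes essentially the same route as the paper's: both directions reduce to comparing polynomial degrees, with the forward implication obtained by applying condition (2) to \(\mathcal{G}=\im(\phi)\) and observing that the degree-\((d+1)\) term coming from \(q(m)\bigl(\rk(\mathcal{G})-\rk(\mathcal{F})\bigr)\) dominates every Hilbert-polynomial term. If anything your write-up is slightly more complete than the paper's, since you verify condition (2) directly for full-rank subsheaves containing \(\im(\phi)\) in the converse direction (where the paper argues by contradiction only through condition (1)) and you explicitly record why every inequality obtained is strict, which is what makes stability and semi-stability coincide.
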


\begin{proof}
	The exact sequence
	\[
		0\longrightarrow \mathcal{K}\coloneqq \ker(\phi)\longrightarrow \mathcal{O}_{X}^{r}\xlongrightarrow{\phi}
		\mathcal{F}\longrightarrow \mathcal{Q}\coloneqq \coker(\phi)\longrightarrow 0
	\]
	induces a short exact sequence
	\[
		0\longrightarrow \im(\phi)\longrightarrow \mathcal{F}\longrightarrow \mathcal{Q}\longrightarrow 0.
	\]
	Hence we obtain a commutative diagram
	\[
		\xymatrix{
			\mathcal{O}_{X}^{r} \ar[r]^{\phi} \ar@{=}[d] & \im(\phi) \ar@{^{(}->}[d] \\
			\mathcal{O}_{X}^{r} \ar[r] & \mathcal{F}.
		}
	\]
	Now we assume that \(\mathcal{O}_{X}^{r}\to\mathcal{F}\) is \(\tau'\)-limit-stable:
	\[
		\frac{P_{\im(\phi)}+q(m)}{\rk(\im(\phi))} < \frac{P_{\mathcal{F}}+q(m)}{\rk(\mathcal{F})}.
	\]
	By rearrangement we get
	\begin{equation}\label{eq:rearrangement}
		q(m)\cdot (\rk(\mathcal{F})-\rk(\im(\phi))) < \rk(\im(\phi))P_{\mathcal{F}}-\rk(\mathcal{F})P_{\im(\phi)}.
	\end{equation}
	Note that the right-hand side of \eqref{eq:rearrangement} is a polynomial in \(m\)
	of degree at most \(d\).
	However by the choice of \(q(m)\), it is a polynomial of degree at least \(d+1\) with positive leading coefficients.
	Hence, as \(m\to \infty\),
	the only way for the inequality \eqref{eq:rearrangement} to hold is \(\rk(\im(\phi))=\rk(\mathcal{F})\) and
	therefore \(\mathcal{Q}\) has rank zero.

	For the other direction,
	we assume that \(\mathcal{Q}\) is a torsion sheaf,
	but \(\mathcal{O}_X^r\to \mathcal{F}\) is not \(\tau'\)-limit-stable.
	Then there exists a \emph{saturated} subsheaf \(\mathcal{G}\) satisfying the destabilising condition:
	\[
		\frac{P_{\mathcal{G}}}{\rk(\mathcal{G})} \geq \frac{P_{\mathcal{F}}+q(m)}{\rk(\mathcal{F})},
	\]
	noting that \(\im(\phi)\not\subseteq \mathcal{G}\).
	Since \(\mathcal{F}\) is pure one has \(\rk(\mathcal{G})>0\).
	But the degree of \(q(m)\) is chosen to be sufficiently large, a contradiction.
\end{proof}

Now we study automorphisms of stable pairs.

\begin{lemma}[{\cite[Lemma~3.6]{sheshmani2016higher}}]\label{lem:auto_stable_pair}
	Given a \(\tau'\)-limit-stable pair \(\mathcal{O}_{X}^{r}\xrightarrow{\phi}\mathcal{F}\)
	and a commutative diagram
	\begin{equation}\label{eq:stable_identity}
		\xymatrix{
			\mathcal{O}_{X}^{r} \ar[r]^{\phi} \ar@{=}[d]_{\id} & \mathcal{F} \ar[d]^{\rho} \\
			\mathcal{O}_{X}^{r} \ar[r]_{\phi} & \mathcal{F}.
		}
	\end{equation}
	The map \(\rho\) is given by \(\id_{\mathcal{F}}\).
\end{lemma}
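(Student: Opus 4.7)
The plan is to reduce $\rho = \id_{\mathcal{F}}$ to showing that a certain induced map \emph{out of} $\coker(\phi)$ vanishes, then to invoke purity of $\mathcal{F}$ together with the torsion property of $\coker(\phi)$ established in \cref{lem:stable_equivalent_condition}.

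First I would consider the endomorphism $\eta \coloneqq \rho - \id_{\mathcal{F}}$ of $\mathcal{F}$. Commutativity of \eqref{eq:stable_identity} gives $\rho \circ \phi = \phi$, so $\eta \circ \phi = 0$. In particular $\eta$ vanishes on $\im(\phi) \subseteq \mathcal{F}$, so $\eta$ factors through the quotient $\mathcal{F} \twoheadrightarrow \mathcal{F}/\im(\phi) = \coker(\phi) = \mathcal{Q}$; that is, there exists $\bar{\eta}\colon \mathcal{Q} \to \mathcal{F}$ with $\eta$ equal to the composition $\mathcal{F} \twoheadrightarrow \mathcal{Q} \xrightarrow{\bar{\eta}} \mathcal{F}$. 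To conclude $\rho = \id_{\mathcal{F}}$ it therefore suffices to prove $\bar{\eta} = 0$, equivalently $\Hom(\mathcal{Q}, \mathcal{F}) = 0$.

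Next I would invoke \cref{lem:stable_equivalent_condition}, which implies that $\mathcal{Q} = \coker(\phi)$ is a torsion sheaf, i.e.\ its support has dimension at most $d-1$. Since the pair is $\tau'$-limit-stable, $\mathcal{F}$ is pure of dimension $d$, so it contains no subsheaves supported in dimension strictly less than $d$. The image $\im(\bar{\eta}) \subseteq \mathcal{F}$ is a quotient of $\mathcal{Q}$, hence its support is contained in $\Supp \mathcal{Q}$, a set of dimension at most $d-1$; purity of $\mathcal{F}$ then forces $\im(\bar{\eta}) = 0$. Therefore $\bar{\eta} = 0$, so $\eta = 0$, i.e.\ $\rho = \id_{\mathcal{F}}$.

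There is no genuine obstacle here; the only subtle point, and the place where the stable-pair hypothesis truly enters, is the step $\Hom(\mathcal{Q}, \mathcal{F}) = 0$, which rests simultaneously on the torsion property of $\coker(\phi)$ (consequence of $\tau'$-limit-stability via \cref{lem:stable_equivalent_condition}) and on the purity of $\mathcal{F}$ (built into the definition of stable pair).
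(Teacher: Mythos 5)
Your proposal is correct and is essentially the same argument as the paper's: both reduce to the vanishing of $\Hom(\coker(\phi),\mathcal{F})$, which follows from $\coker(\phi)$ being torsion (via \cref{lem:stable_equivalent_condition}) and $\mathcal{F}$ being pure. The paper phrases this as injectivity of the restriction map $\Hom(\mathcal{F},\mathcal{F})\to\Hom(\im(\phi),\mathcal{F})$ applied to $\rho$ and $\id_{\mathcal{F}}$, while you phrase it as $\rho-\id_{\mathcal{F}}$ factoring through $\coker(\phi)$; these are the same step in two equivalent formulations.
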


\begin{proof}
	The diagram \eqref{eq:stable_identity} induces
	\[
		\xymatrix@C=3em{
		\mathcal{O}_{X}^{r} \ar@{->>}[r]^-{\phi} \ar@{=}[d]_{\id} & \im(\phi) \ar@{^{(}->}[r] \ar[d]^{\rho|_{\im(\phi)}} & \mathcal{F} \ar[d]^{\rho} \\
		\mathcal{O}_{X}^{r} \ar@{->>}[r]^-{\phi} & \im(\phi) \ar@{^{(}->}[r] & \mathcal{F}.
		}
	\]
	By commutativity of \eqref{eq:stable_identity}, \(\rho\circ\phi=\phi\circ\id=\phi\) then
	\(\rho(\im(\phi))=\im(\phi)\).
	Hence \(\rho(\im(\phi))\subseteq \im(\phi)\).
	It follows that \(\rho\mid _{\im(\phi)}=\id_{\im(\phi)}\).
	Indeed, if \(s\in \im(\phi)(U)\) where \(U\subseteq X\) is affine open
	with \(\widetilde{s}\in \mathcal{O}_{X}^{r}(U)\) satisfying \(\phi(\widetilde{s})=s\)
	then
	\[
		\rho(s)=\rho(\phi(\widetilde{s}))=\phi(\id(\widetilde{s}))=\phi(\widetilde{s})=s.
	\]
	Now apply \(\Hom(-,\mathcal{F})\) to the short exact sequence
	\[
		0\longrightarrow \im(\phi)\longrightarrow \mathcal{F}\longrightarrow \mathcal{Q}\longrightarrow 0
	\]
	where \(\mathcal{Q}\) is the corresponding cokernel.
	We obtain
	\[
		0\longrightarrow \Hom(\mathcal{Q},\mathcal{F})\longrightarrow \Hom(\mathcal{F},\mathcal{F})\longrightarrow \Hom(\im(\phi),\mathcal{F}).
	\]
	Since \(\mathcal{O}_{X}^{r}\to \mathcal{F}\) is \(\tau'\)-limit-stable,
	\(\mathcal{Q}\)	is a torsion sheaf.
	Hence by purity of \(\mathcal{F}\), \(\Hom(\mathcal{Q},\mathcal{F})=0\).
	We obtain an injection
	\[
		\Hom(\mathcal{F},\mathcal{F})\hooklongrightarrow \Hom(\im(\phi),\mathcal{F}).
	\]
	Now \(\rho|_{\im(\phi)}=\id_{\im(\phi)}=(\id_{\mathcal{F}})|_{\im(\phi)}\),
	so \(\rho=\id_{\mathcal{F}}\).
\end{proof}

\subsection{Sheaf stable pairs}

In this subsection,
we establish some general results regarding sheaf stable pairs on varieties.
Inspired by \cref{lem:stable_equivalent_condition},
we propose the following definition of such pairs.

\begin{definition}
	Assume \(Z\) is a variety.
	A \emph{sheaf stable pairs} is of the form \(\mathcal{E},s\) where
	\[
		\begin{cases}
			\mathcal{E}\text{ is a torsion-free coherent sheaf of rank } r>0, \\
			\mathcal{O}_{Z}^r\xlongrightarrow{s}\mathcal{E}\text{ is a morphism of }
			\mathcal{O}_{Z}\text{-modules},                                   \\
			\dim\Supp\coker(s)<\dim Z.
		\end{cases}
	\]
\end{definition}

To give a morphism \(\mathcal{O}_{Z}^r\xlongrightarrow{s}\mathcal{E}\) is the same as
giving \(r\) sections
\[
	s_{1}, \dots, s_{r}\in H^{0}(Z,\mathcal{E})
\]
where \(s_{i}\) corresponds to the morphism from the \(i\)-th summand
of \(\mathcal{O}_{Z}^r\) to \(\mathcal{E}\)
determined by \(s\).
We will then sometimes use the notation \(\mathcal{E},s_{1},\dots,s_{r}\)
instead of \(\mathcal{E},s\).

We also often denote \(\mathcal{Q}\coloneqq\coker(s)\).
So we get an exact sequence
\[
	\mathcal{O}_{Z}^r\xlongrightarrow{s}\mathcal{E}\longrightarrow
	\mathcal{Q}=\coker(s)\longrightarrow 0.
\]

\begin{lemma}
	Assume \(\mathcal{E},s\) is a stable pair.
	Then the morphism \(\mathcal{O}_{Z}^r\xlongrightarrow{s}\mathcal{E}\) is injective.
\end{lemma}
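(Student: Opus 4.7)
The plan is to show that $\ker(s)$ vanishes by a rank and torsion-freeness argument at the generic point of $Z$.

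First I would name the kernel $\mathcal{K}\coloneqq\ker(s)$ and split the four-term exact sequence
\[
0\longrightarrow \mathcal{K}\longrightarrow \mathcal{O}_Z^r\xlongrightarrow{s}\mathcal{E}\longrightarrow \mathcal{Q}\longrightarrow 0
\]
into the two short exact sequences $0\to\mathcal{K}\to\mathcal{O}_Z^r\to\im(s)\to 0$ and $0\to\im(s)\to\mathcal{E}\to\mathcal{Q}\to 0$. The assumption $\dim\Supp\mathcal{Q}<\dim Z$ means the generic point $\eta$ of $Z$ does not lie in $\Supp\mathcal{Q}$, so $\mathcal{Q}_\eta=0$.

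Next I would localise everything at $\eta$. Since $\mathcal{E}$ has rank $r$, the stalk $\mathcal{E}_\eta$ is an $r$-dimensional $k(Z)$-vector space, and $\mathcal{O}_Z^r$ contributes another $r$-dimensional vector space $\mathcal{O}_{Z,\eta}^r$. From $\mathcal{Q}_\eta=0$ the morphism $s_\eta\colon \mathcal{O}_{Z,\eta}^r\to \mathcal{E}_\eta$ is surjective between vector spaces of equal dimension over the field $k(Z)$, hence an isomorphism. Consequently $\mathcal{K}_\eta=0$, i.e.\ $\mathcal{K}$ is a torsion sheaf on $Z$.

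Finally I would conclude by observing that $\mathcal{K}$ is a subsheaf of $\mathcal{O}_Z^r$, which is torsion-free since $Z$ is a variety (hence integral). A torsion subsheaf of a torsion-free sheaf is zero, so $\mathcal{K}=0$ and $s$ is injective. There is no real obstacle here; the argument is a routine combination of rank comparison at the generic point with torsion-freeness of $\mathcal{O}_Z$, and the only mild subtlety is to make sure $Z$ being a variety is used in order to assert torsion-freeness of $\mathcal{O}_Z^r$.
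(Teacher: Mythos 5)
Your proposal is correct and follows essentially the same route as the paper: both arguments observe that $s$ is an isomorphism at the generic point (so $\ker(s)$ is torsion) and then conclude $\ker(s)=0$ because it sits inside the torsion-free sheaf $\mathcal{O}_Z^r$. Your version merely spells out the generic-point rank count that the paper leaves implicit.
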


\begin{proof}
	Since \(\dim\Supp\coker(s)<\dim Z\),
	\(s\) is generically an isomorphism, so \(\ker(s)\) is torsion, hence zero.
	So we get a short exact sequence
	\[
		0\longrightarrow\mathcal{O}_{Z}^r\xlongrightarrow{s}\mathcal{E}\longrightarrow
		\mathcal{Q}\longrightarrow 0.
		\qedhere
	\]
\end{proof}

\begin{lemma}
	Assume \(\mathcal{E},s\) is a stable pair on a normal variety \(Z\).
	Then \(\Supp\coker(s)\) is empty or of pure codimension one.
\end{lemma}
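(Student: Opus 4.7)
The plan is to combine the short exact sequence
\[
0 \longrightarrow \mathcal{O}_Z^r \xlongrightarrow{s} \mathcal{E} \longrightarrow \mathcal{Q} \longrightarrow 0
\]
from the previous lemma with a standard depth/Serre $S_2$ argument, exploiting the fact that $Z$ is normal. Suppose for contradiction that $\Supp\mathcal{Q}$ is nonempty and has some irreducible component $V \subseteq Z$ of codimension $\geq 2$. Let $\mathfrak{p}$ be the generic point of $V$; then $\mathfrak{p}$ is an associated prime of $\mathcal{Q}$, so the maximal ideal of $\mathcal{O}_{Z,\mathfrak{p}}$ is an associated prime of $\mathcal{Q}_{\mathfrak{p}}$ and hence $\depth_{\mathcal{O}_{Z,\mathfrak{p}}} \mathcal{Q}_{\mathfrak{p}} = 0$.

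On the other hand, I would localize the short exact sequence at $\mathfrak{p}$ and apply the depth lemma, which for a short exact sequence $0 \to A \to B \to C \to 0$ of finitely generated modules over a local ring gives
\[
\depth C \geq \min(\depth A - 1,\; \depth B).
\]
Since $Z$ is normal, $\mathcal{O}_Z$ satisfies Serre's condition $S_2$, so $\depth \mathcal{O}_{Z,\mathfrak{p}} \geq \min(2,\dim\mathcal{O}_{Z,\mathfrak{p}}) = 2$ because $\codim \mathfrak{p} \geq 2$. Thus $\depth \mathcal{O}_{Z,\mathfrak{p}}^r - 1 \geq 1$. Moreover, since $\mathcal{E}$ is torsion-free on the integral scheme $Z$, the only associated prime of $\mathcal{E}$ is the generic point of $Z$, so $\depth \mathcal{E}_{\mathfrak{p}} \geq 1$ (as $\mathfrak{p}$ is not the generic point). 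The depth lemma then yields $\depth \mathcal{Q}_{\mathfrak{p}} \geq 1$, contradicting $\depth \mathcal{Q}_{\mathfrak{p}} = 0$.

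Hence every irreducible component of $\Supp\mathcal{Q}$ has codimension exactly one in $Z$ (codimension $\geq 1$ being automatic from the stable-pair hypothesis $\dim\Supp\mathcal{Q} < \dim Z$), which is the desired conclusion. The argument is essentially routine once the correct hypotheses are lined up; the only slightly subtle point is invoking the $S_2$ property of normal varieties at codimension-two points to feed the depth lemma, and ensuring that the associated prime of $\mathcal{Q}$ we are testing is indeed not the generic point of $Z$ (which is guaranteed because $\mathcal{Q}$ is supported in proper codimension). I do not expect any significant obstacle.
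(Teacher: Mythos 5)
Your proof is correct, but it takes a genuinely different route from the paper's. The paper argues via reflexive hulls: after shrinking \(Z\) so that \(0<\dim\Supp\coker(s)\leq\dim Z-2\), it compares \(\mathcal{O}_Z^r\to\mathcal{E}\) with the double duals, uses that \(\mathcal{E}^{\vee\vee}\) is reflexive and hence (by normality of \(Z\)) determined by its restriction to the big open set \(U=Z\setminus\Supp\coker(s)\), concludes \(\mathcal{O}_Z^r\to\mathcal{E}^{\vee\vee}\) is an isomorphism, and then sandwiches \(\mathcal{E}\) using torsion-freeness to force \(\coker(s)=0\), a contradiction. You instead localise at the generic point \(\mathfrak{p}\) of a codimension \(\geq 2\) component of the support, observe that this minimal prime of the support is an associated prime so \(\operatorname{depth}\mathcal{Q}_{\mathfrak{p}}=0\), and then contradict this via the depth inequality \(\operatorname{depth} C\geq\min(\operatorname{depth} A-1,\operatorname{depth} B)\), feeding in \(\operatorname{depth}\mathcal{O}_{Z,\mathfrak{p}}\geq 2\) from Serre's \(S_2\) (normality) and \(\operatorname{depth}\mathcal{E}_{\mathfrak{p}}\geq 1\) from torsion-freeness (the only associated prime of \(\mathcal{E}\) is the generic point). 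Both arguments use normality and torsion-freeness in essentially parallel ways — indeed the extension property of reflexive sheaves that the paper cites from Hartshorne is itself proved by depth considerations — but your version is purely local and self-contained in commutative algebra, avoiding the citations to the reflexive-sheaf machinery, while the paper's version is shorter on the page and packages the same \(S_2\) input into a single geometric statement about big open subsets. All the steps you flag as potentially subtle (the component being a minimal, hence associated, prime; \(\mathfrak{p}\) not being the generic point; the exact form of the depth lemma) check out.
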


\begin{proof}
	Let \(\mathcal{Q}=\coker(s)\).
	If \(\mathcal{Q}=0\), then \(\Supp\mathcal{Q}=\emptyset\).

	Suppose \(\mathcal{Q}\neq 0\).
	Assume \(\Supp\mathcal{Q}\) is not of pure codimension one.
	Then after shrinking we can assume \(0<\dim\Supp\mathcal{Q}\leq\dim Z-2\).
	Now we have a diagram
	\[
		\xymatrix{
		\mathcal{O}_{Z}^r \ar[r]^s \ar[d]_{\text{isomorphism}} & \mathcal{E} \ar[d] \\
		(\mathcal{O}_{Z}^r)^{\vee\vee} \ar[r] & \mathcal{E}^{\vee\vee}
		}
	\]
	where \(^{\vee\vee}\) denotes double dual.
	All the maps are isomorphisms on \(U=Z\setminus\Supp\mathcal{Q}\).
	But \(\mathcal{E}^{\vee\vee}\) is reflexive (\cite[Corollary 1.2]{hartshorne1980stable})
	and \(Z\) is normal,
	hence \(\mathcal{E}^{\vee\vee}\) is determined by
	\(\mathcal{E}^{\vee\vee}|_U\simeq\mathcal{O}_{Z}^r|_U\)
	(\cite[Proposition 1.6]{hartshorne1980stable}),
	so \(\mathcal{O}_{Z}^r\to\mathcal{E}^{\vee\vee}\) is an isomorphism.
	On the other hand, \(\mathcal{E}\) is torsion-free by assumption,
	so the natural morphism \(\mathcal{E}\to\mathcal{E}^{\vee\vee}\) is injective.
	Therefore,
	\[
		\mathcal{O}_{Z}^r\longrightarrow\mathcal{E}\longrightarrow\mathcal{E}^{\vee\vee}
	\]
	are isomorphisms,
	contradicting the assumption \(\mathcal{Q}\neq 0\).
\end{proof}


\begin{lemma}\label{lem:locally_free_bs_vertical}
	Assume \(\mathcal{E},s\) is a stable pair on a variety \(Z\).
	Assume \(\mathcal{E}\) is locally free, and
	\[
		X=\mathbb{P}(\mathcal{E})\xlongrightarrow{f} Z, \quad \mathcal{O}_{X}(1)
	\]
	the associated projection and line bundle.
	Then the base locus \(\Bs(\mathcal{O}_{X}(1))\) is vertical over \(Z\).
	More precisely,
	\[
		f(\Bs(\mathcal{O}_{X}(1))) \subseteq \Supp\coker(s).
	\]
\end{lemma}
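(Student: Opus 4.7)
The plan is to transfer the sections of $\mathcal{E}$ given by $s$ to sections of $\mathcal{O}_X(1)$ via the tautological quotient on $X = \mathbb{P}(\mathcal{E})$, and then check that at every point of $X$ lying over a point of $Z \setminus \Supp \coker(s)$, at least one of those sections does not vanish.

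First I would recall the tautological setup. Since $\mathcal{E}$ is locally free, on $X = \mathbb{P}(\mathcal{E})$ there is a surjection $f^*\mathcal{E} \twoheadrightarrow \mathcal{O}_X(1)$, and $f_*\mathcal{O}_X(1) = \mathcal{E}$. In particular, the morphism $s\colon \mathcal{O}_Z^r \to \mathcal{E}$, viewed as an $r$-tuple of sections $s_1, \dots, s_r \in H^0(Z, \mathcal{E})$, pulls back and composes with $f^*\mathcal{E} \twoheadrightarrow \mathcal{O}_X(1)$ to produce $r$ sections $\tilde{s}_1, \dots, \tilde{s}_r \in H^0(X, \mathcal{O}_X(1))$, and at every point $x \in X$ with $z = f(x)$ the value $\tilde{s}_i(x)$ is obtained by taking the image $s_i(z) \in \mathcal{E}(z) := \mathcal{E}_z \otimes k(z)$ and pushing it through the one-dimensional quotient $\mathcal{E}(z) \twoheadrightarrow \mathcal{O}_X(1)|_x$ that defines the point $x$ of the fibre $\mathbb{P}(\mathcal{E}(z))$.

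Next I would pick a point $x \in X$ whose image $z = f(x)$ lies outside $\Supp \coker(s)$ and show $x \notin \Bs(\mathcal{O}_X(1))$. The assumption $z \notin \Supp \coker(s)$ means $(\coker s)_z = 0$, so the stalk map $\mathcal{O}_{Z,z}^r \to \mathcal{E}_z$ is surjective. By Nakayama, the induced map $k(z)^r \to \mathcal{E}(z)$ is surjective, i.e.\ the vectors $s_1(z), \dots, s_r(z)$ span $\mathcal{E}(z)$. Since the one-dimensional quotient $\mathcal{E}(z) \twoheadrightarrow \mathcal{O}_X(1)|_x$ is nonzero, at least one $s_i(z)$ must map to a nonzero element; equivalently, at least one $\tilde{s}_i$ is nonvanishing at $x$. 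Hence $x \notin \Bs(\mathcal{O}_X(1))$.

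Combining the two steps yields $\Bs(\mathcal{O}_X(1)) \subseteq f^{-1}(\Supp \coker(s))$, and applying $f$ gives the desired inclusion. No step seems to present a real obstacle here; the only point that requires care is fixing the projective-bundle convention used by the paper so that the identification $f_*\mathcal{O}_X(1) = \mathcal{E}$ (and the interpretation of a point of $X$ as a one-dimensional quotient of a fibre of $\mathcal{E}$) is unambiguous; once that is done, the argument is a direct application of Nakayama's lemma fibrewise.
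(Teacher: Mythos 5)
Your proof is correct and follows essentially the same route as the paper: both pull back \(s\) and compose with the tautological surjection \(f^*\mathcal{E}\twoheadrightarrow\mathcal{O}_X(1)\), then observe that the resulting map \(\mathcal{O}_X^r\to\mathcal{O}_X(1)\) is surjective over \(f^{-1}(Z\setminus\Supp\coker(s))\). The paper phrases this as surjectivity of a composite of sheaf surjections rather than a fibrewise Nakayama check, but the content is identical.
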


\begin{proof}
	Pulling back \(\mathcal{O}_{Z}^r \xlongrightarrow{s} \mathcal{E}\)
	to \(X\) we get
	\begin{equation}\label{eq:natural_surjection}
		\mathcal{O}_{X}^r \longrightarrow f^{*}\mathcal{E} \longrightarrow \mathcal{O}_{X}(1)
	\end{equation}
	where the second morphism is a natural surjective morphism
	\cite[Chapter~II, Proposition~7.11]{hartshorne1977algebraic}.
	Since \(\mathcal{O}_{Z}^r \xlongrightarrow{s} \mathcal{E}\)
	is surjective on
	\[
		U\coloneqq Z\setminus \Supp\coker(s)
	\]
	the induced morphism \(\mathcal{O}_{X}^r \longrightarrow \mathcal{O}_{X}(1)\)
	is surjective on \(f^{-1}U\).
	So \(\mathcal{O}_{X}(1)\) is generated by global sections on \(f^{-1}U\),
	hence \(\Bs(\mathcal{O}_{X}(1))\subseteq X\setminus f^{-1}U\).
\end{proof}

\begin{corollary}\label{cor:stable_pair_nef}
	Assume \(\mathcal{E},s\) is a stable pair on a smooth projective curve \(Z\),
	then the associated \(\mathcal{O}_{X}(1)\) is nef (i.e., \(\mathcal{E}\) is nef).
\end{corollary}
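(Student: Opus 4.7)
The plan is to leverage \cref{lem:locally_free_bs_vertical} together with the observation that $\mathcal{O}_X(1)$ restricts to an ample line bundle on every fibre of $f\colon X = \mathbb{P}(\mathcal{E}) \to Z$. First I would note that since $Z$ is a smooth curve, the torsion-free sheaf $\mathcal{E}$ is automatically locally free, so the projective bundle $X = \mathbb{P}(\mathcal{E})$ and the tautological line bundle $\mathcal{O}_X(1)$ are well-defined and \cref{lem:locally_free_bs_vertical} applies. Recall that $\mathcal{E}$ is said to be nef precisely when $\mathcal{O}_X(1)$ is nef on $X$, so it suffices to prove the latter.

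By \cref{lem:locally_free_bs_vertical} we have $f(\Bs(\mathcal{O}_X(1))) \subseteq \Supp\coker(s)$. Since $\dim Z = 1$ and the stable pair condition forces $\dim\Supp\coker(s) < \dim Z$, the support of $\coker(s)$ consists of finitely many closed points $q_1, \dots, q_\ell$. Therefore $\Bs(\mathcal{O}_X(1))$ is contained in the union of finitely many fibres $f^{-1}(q_j)$ of $f$.

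To conclude nefness, I would check that $\mathcal{O}_X(1) \cdot C \geq 0$ for every irreducible curve $C \subset X$. Since $Z$ is a curve, either $f(C)$ is a point or $f(C) = Z$. In the first case $C$ lies entirely in a fibre $F \cong \mathbb{P}^{r-1}$ of $f$, and the restriction $\mathcal{O}_X(1)|_F \cong \mathcal{O}_{\mathbb{P}^{r-1}}(1)$ is (very) ample, which gives $\mathcal{O}_X(1) \cdot C > 0$. In the second case $C$ dominates $Z$ and hence is not contained in any single fibre, so in particular $C \not\subseteq \Bs(\mathcal{O}_X(1))$; thus there exists a global section of $\mathcal{O}_X(1)$ not identically vanishing on $C$, which yields $\mathcal{O}_X(1) \cdot C \geq 0$.

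The only conceptual issue is dealing with curves that happen to lie inside $\Bs(\mathcal{O}_X(1))$, and this is precisely handled by the observation that such curves must be vertical and hence are tested against $\mathcal{O}_X(1)$ on a projective space, where ampleness is automatic. So no further input is needed and the argument terminates.
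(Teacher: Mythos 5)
Your proposal is correct and follows essentially the same route as the paper: both use \cref{lem:locally_free_bs_vertical} to confine the base locus to fibres over $\Supp\coker(s)$ and then invoke the relative ampleness of $\mathcal{O}_X(1)$ over $Z$ to handle vertical curves (the paper phrases this as a contradiction, you as a direct case split, but the content is identical). Your additional observation that torsion-free sheaves on a smooth curve are automatically locally free, so the lemma applies, is a correct and worthwhile clarification.
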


\begin{proof}
	Recall that an invertible sheaf \(\mathcal{L}\) on a projective scheme \(Y\) is nef
	if \(\deg \mathcal{L}|_C\geq 0\) for every curve \(C\subseteq Y\).

	In our case,
	\[
		0=\dim\Supp\coker(s)<\dim Z=1,
	\]
	so by \cref{lem:locally_free_bs_vertical},
	\[
		\Bs(\mathcal{O}_{X}(1)) \subseteq \text{union of fibres of } f.
	\]
	Thus if \(C\subseteq X\) is a curve with \(\deg\mathcal{O}_{X}(1)|_C<0\),
	then \(C\subseteq \Bs(\mathcal{O}_{X}(1)) \subseteq \text{union of fibres of } f\).
	But \(\mathcal{O}_{X}(1)\) is ample over \(Z\), so there is no such \(C\).
	This shows that \(\mathcal{O}_{X}(1)\) is nef.
\end{proof}

\begin{example}
	If \(Z\) is a smooth projective curve and
	\(\mathcal{E}=\mathcal{O}_{Z}(n_1)\oplus \dots\oplus \mathcal{O}_{Z}(n_r),s\) is stable,
	then \(n_{i}\geq 0\) for every \(i\).

	If not, say, \(n_1<0\),
	then \(\mathcal{O}_{X}(1)|_S\) is anti-ample
	where \(S\hookrightarrow X\) is the section determined by \(\mathcal{O}_{Z}(n_1)\).
	But then \(\mathcal{O}_{X}(1)\) is not nef, a contradiction.
	See also \cite[Proposition~6.1.2]{lazarsfeld2004positivitya}.

	In particular, any stable pair on \(\mathbb{P}^1\) is of the form above.
\end{example}

\begin{example}
	In general,
	\(\mathcal{O}_{X}(1)\) may not be base point free.
	Indeed,
	assume
	\[
		\begin{array}{l}
			Z = \text{elliptic curve}, \vspace{0.2em} \\
			\mathcal{E} = \mathcal{O}_{Z} \oplus \mathcal{O}_{Z}(p), \text{ where } p \text{ is a point on } Z.
		\end{array}
	\]
	The identity morphism \(\mathcal{O}_{Z}\to \mathcal{O}_{Z}\) and
	the morphism \(\mathcal{O}_{Z}\to \mathcal{O}_{Z}(p)\) corresponding to
	any non-zero section determine a stable pair \(\mathcal{E},s\)
	with \(\coker(s)\) supported at \(p\).
	Now \(X=\mathbb{P}_Z(\mathcal{E})\) is a ruled surface over \(Z\).
	Consider the section \(S\subseteq X\) given by the surjection
	\(\mathcal{E}\to\mathcal{O}_{Z}(p)\)
	\cite[Chapter~V, Proposition~2.6]{hartshorne1977algebraic}.
	Then \(\mathcal{O}_{X}(1)|_S\) is isomorphic to \(\mathcal{O}_{Z}(p)\)
	which is not base point free,
	hence \(\mathcal{O}_{X}(1)\) is not base point free.
\end{example}

\begin{remark}\label{rem:rational_map}
	Assume \(\mathcal{E},s\) is a stable pair on a variety \(Z\).
	Then the morphism \(\mathcal{O}_{Z}^r\xlongrightarrow{s}\mathcal{E}\)
	determines a rational map
	\[
		\xymatrix{
		X=\mathbb{P}(\mathcal{E}) \ar@{-->}[rr] \ar[rd] & & \mathbb{P}_Z^{r-1} \ar[ld] \\
		& Z
		}
	\]
	Indeed,
	the summands \(\mathcal{O}_{Z}\) of \(\mathcal{O}_{Z}^r\)
	determine sections \(s_{1},\dots,s_{r}\) of \(\mathcal{E}\)
	which generate \(\mathcal{E}\) outside \(\Supp\coker(s)\).
	We can view \(s_{1}, \dots, s_{r}\) as sections of \(\mathcal{O}_{X}(1)\) generating it outside
	\[
		f^{-1}\Supp\coker(s).
	\]
	Therefore,
	they determine a rational map as in the diagram above
	which is a morphism outside \(f^{-1}\Supp\coker(s)\).

	Alternatively, the morphism \(\mathcal{O}_{Z}^r\to \mathcal{E}\)
	is an isomorphism outside \(\Supp\coker(s)\)
	hence determines the rational map above.
\end{remark}

\begin{definition}
	Assume \(Z\) is a variety,
	\(\mathcal{E},s\) and \(\mathcal{G},t\) stable pairs.
	We say the pairs are \emph{equivalent} if there is a commutative diagram
	\[
		\xymatrix{
		\mathcal{O}_Z^r \ar[r]^s \ar@{=}[d]_{\id} & \mathcal{E} \ar[d]_{\rotatebox{90}{\(\sim\)}}^{\text{isomorphism}} \\
		\mathcal{O}_{Z}^r \ar[r]_t & \mathcal{G}
		}
	\]
	where the vertical arrows are isomorphisms
	and the left one is the identity.
	The equivalence class of \(\mathcal{E},s\) is denoted by \([\mathcal{E},s]\).

	If \(s_{1},\dots,s_{r}\in H^{0}(Z,\mathcal{E})\) and
	\(t_{1},\dots,t_{r}\in H^{0}(Z,\mathcal{G})\)
	are the sections of \(\mathcal{E},\mathcal{G}\) determined by \(s,t\),
	then the above is equivalent to saying that
	there exists an isomorphism \(\mathcal{E}\to\mathcal{G}\)
	sending \(s_{i}\) to \(t_{i}\) for every \(i\).
\end{definition}

\begin{lemma}\label{lem:scaling_matrix_equivalent}
	Assume \(Z\) is a variety,
	\(\mathcal{E},s\) and \(\mathcal{G},t\) are stable pairs.
	Assume that there is a commutative diagram
	\[
		\xymatrix{
		\mathcal{O}_Z^r \ar[r]^s \ar[d]_{\rotatebox{90}{\(\sim\)}} & \mathcal{E} \ar[d]_{\rotatebox{90}{\(\sim\)}}^{\text{isomorphism}} \\
		\mathcal{O}_{Z}^r \ar[r]_t & \mathcal{G}
		}
	\]
	where the vertical arrows are isomorphisms and the left one is given by a matrix
	\[
		\begin{bmatrix}
			\lambda & 0       & \cdots & 0       \\
			0       & \lambda & \cdots & 0       \\
			\vdots  & \vdots  & \ddots & \vdots  \\
			0       & 0       & \cdots & \lambda
		\end{bmatrix}
	\]
	for some \(\lambda \in k\setminus \{0\}\).
	Then
	\[
		[\mathcal{E},s] = [\mathcal{G},t].
	\]
\end{lemma}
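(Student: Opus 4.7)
The plan is to construct an explicit isomorphism \(\rho\colon \mathcal{E}\to\mathcal{G}\) producing the equivalence of the two stable pairs, by simply rescaling the given right vertical arrow.

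Let \(\psi\colon \mathcal{E}\to \mathcal{G}\) denote the right vertical isomorphism in the hypothesised diagram. Because the left vertical map is the scalar matrix \(\lambda\cdot\mathrm{id}\) and \(t\) is \(\mathcal{O}_Z\)-linear, the composition \(t\) followed by the left vertical scaling equals \(\lambda\cdot t\) as a morphism \(\mathcal{O}_Z^r\to \mathcal{G}\). Commutativity of the given square therefore reads
\[
\psi\circ s \;=\; \lambda\cdot t.
\]

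I would then take \(\rho \coloneqq \lambda^{-1}\psi\). Since \(\lambda\in k\setminus\{0\}\), this is still an isomorphism \(\mathcal{E}\to\mathcal{G}\), and by the identity displayed above,
\[
\rho\circ s \;=\; \lambda^{-1}\psi\circ s \;=\; \lambda^{-1}\lambda\cdot t \;=\; t.
\]
Thus the square with \(\mathrm{id}_{\mathcal{O}_Z^r}\) on the left and \(\rho\) on the right commutes, and the definition of equivalence gives \([\mathcal{E},s]=[\mathcal{G},t]\).

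There is essentially no obstacle here: the argument is a one-line manipulation once one notes that scaling of sections commutes with an \(\mathcal{O}_Z\)-linear map. The only tiny point to observe is that scalar multiplication by \(\lambda^{-1}\) on a sheaf of \(\mathcal{O}_Z\)-modules is an \(\mathcal{O}_Z\)-linear automorphism because \(\lambda^{-1}\) lies in the base field \(k\), which is central; hence \(\rho=\lambda^{-1}\psi\) is a genuine isomorphism of sheaves.
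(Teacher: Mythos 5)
Your proof is correct and is essentially the same as the paper's: both reduce the commutativity of the square to the identity \(\psi\circ s=\lambda t\) (equivalently, \(\psi\) sends \(s_i\) to \(\lambda t_i\)) and then replace \(\psi\) by \(\lambda^{-1}\psi\), i.e.\ post-compose with the automorphism of \(\mathcal{G}\) given by multiplication by \(\lambda^{-1}\), to obtain an isomorphism carrying \(s\) to \(t\).
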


\begin{proof}
	If \(s_{1},\dots,s_{r}\in H^{0}(Z,\mathcal{E})\) and
	\(t_{1},\dots,t_{r}\in H^{0}(Z,\mathcal{G})\) are the sections of
	\(\mathcal{E},\mathcal{G}\) determined by \(s,t\) respectively,
	then the above is equivalent to saying that there exists \(\lambda\in k\setminus \{0\}\)
	and an isomorphism \(\mathcal{E}\to\mathcal{G}\) sending \(s_{i}\) to \(\lambda t_{i}\),
	for each \(i\).

	Consider the morphism \(\mathcal{G}\to \mathcal{G}\) which sends a section \(u\)
	on an open set to \(\frac{1}{\lambda}u\).
	This is an isomorphism and composing it with \(\mathcal{E}\to \mathcal{G}\)
	gives an isomorphism \(\mathcal{E}\to \mathcal{G}\) sending \(s_{i}\) to \(t_{i}\).
	Thus \([\mathcal{E},s]=[\mathcal{G},t]\).
\end{proof}

\begin{lemma}\label{lem:class_equiv_inclusion}
	Assume \(Z\) is a variety.
	Then to give a class \([\mathcal{E},s]\) of rank \(r\) with \(\mathcal{E}\) reflexive
	is equivalent to giving a reflexive \(\mathcal{K}\) and an inclusion
	\[
		\mathcal{K}\hooklongrightarrow \mathcal{O}_{Z}^r
	\]
	with cokernel of rank zero.
\end{lemma}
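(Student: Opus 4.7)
The plan is to establish the correspondence via the dualization functor $\SHom(-,\mathcal{O}_Z)$, which acts as an involution on reflexive sheaves. Starting from a class $[\mathcal{E},s]$ with $\mathcal{E}$ reflexive, the previous lemma gives a short exact sequence $0\to\mathcal{O}_Z^r\xrightarrow{s}\mathcal{E}\to\mathcal{Q}\to 0$ with $\mathcal{Q}$ torsion. Dualizing, and using $\SHom(\mathcal{Q},\mathcal{O}_Z)=0$ (any morphism from a torsion sheaf to a torsion-free sheaf vanishes), I would obtain
\[
0\longrightarrow \mathcal{E}^\vee \xrightarrow{s^\vee} \mathcal{O}_Z^r \longrightarrow \SExt^1(\mathcal{Q},\mathcal{O}_Z).
\]
I then set $\mathcal{K}:=\mathcal{E}^\vee$, which is reflexive, and note that the cokernel of $s^\vee$ embeds in the torsion sheaf $\SExt^1(\mathcal{Q},\mathcal{O}_Z)$ and therefore has rank zero.

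Conversely, given reflexive $\mathcal{K}$ with inclusion $\iota:\mathcal{K}\hookrightarrow\mathcal{O}_Z^r$ of rank-zero cokernel $\mathcal{C}$, I would dualize $0\to\mathcal{K}\to\mathcal{O}_Z^r\to\mathcal{C}\to 0$ to get
\[
0\longrightarrow \mathcal{O}_Z^r \longrightarrow \mathcal{K}^\vee \longrightarrow \SExt^1(\mathcal{C},\mathcal{O}_Z)\longrightarrow 0,
\]
again using $\SHom(\mathcal{C},\mathcal{O}_Z)=0$. Then $\mathcal{E}:=\mathcal{K}^\vee$ is reflexive and the natural $s:\mathcal{O}_Z^r\to\mathcal{E}$ has torsion cokernel, yielding a stable pair $[\mathcal{E},s]$ with $\mathcal{E}$ reflexive. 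The two constructions are mutually inverse thanks to the canonical identifications $\mathcal{E}\cong\mathcal{E}^{\vee\vee}$ and $\mathcal{K}\cong\mathcal{K}^{\vee\vee}$ coming from reflexivity, which carry $(s^\vee)^\vee$ back to $s$ and $\iota^{\vee\vee}$ back to $\iota$.

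Finally, I would verify that the bijection descends to equivalence classes. An equivalence $\phi:\mathcal{E}\xrightarrow{\sim}\mathcal{G}$ with $\phi s=t$ dualizes to an isomorphism $\phi^\vee:\mathcal{G}^\vee\xrightarrow{\sim}\mathcal{E}^\vee$ with $s^\vee\phi^\vee=t^\vee$, showing that $s^\vee(\mathcal{E}^\vee)$ and $t^\vee(\mathcal{G}^\vee)$ agree as subsheaves of $\mathcal{O}_Z^r$; conversely, two inclusions with the same image induce an isomorphism of the $\mathcal{K}$'s that dualizes to an equivalence of the associated stable pairs. The main obstacle is essentially bookkeeping: ensuring the dualization respects the equivalence on both sides and that the standard facts (vanishing of $\SHom$ from torsion to torsion-free, reflexivity of duals, functoriality of the evaluation maps) apply in the generality of an arbitrary variety $Z$.
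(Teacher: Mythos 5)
Your proposal is correct and follows essentially the same route as the paper: dualise the exact sequence $\mathcal{O}_Z^r\xrightarrow{s}\mathcal{E}\to\coker(s)\to 0$ to obtain $\mathcal{E}^\vee\hookrightarrow\mathcal{O}_Z^r$ with rank-zero cokernel, dualise back for the converse, and use reflexivity plus the fact that equivalent pairs yield inclusions with the same image to see the correspondence descends to classes. Your version merely spells out the $\SExt^1$ bookkeeping that the paper leaves implicit.
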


\begin{proof}
	Given \(\mathcal{E},s\), dualising
	\[
		\mathcal{O}_{Z}^r\xlongrightarrow{s}\mathcal{E}\longrightarrow\coker(s)\longrightarrow 0
	\]
	we get
	\[
		0 \longrightarrow \mathcal{E}^{\vee} \longrightarrow \mathcal{O}_{Z}^r
	\]
	with \(\rank(\mathcal{O}_{Z}^r/\mathcal{E}^{\vee})=0\).
	Moreover, if \([\mathcal{E},s]=[\mathcal{G},t]\),
	then we have a diagram
	\[
		\xymatrix{
			\mathcal{O}_{Z}^r \ar[r]^s \ar[rd]_t & \mathcal{E} \ar[d] \\
			& \mathcal{G}
		}
	\]
	with \(\mathcal{E}\to \mathcal{G}\) an isomorphism,
	and dualising we get
	\[
		\xymatrix{
			\mathcal{E}^{\vee} \ar@{^{(}->}[r] & \mathcal{O}_{Z}^r \\
			\mathcal{G}^{\vee} \ar[u] \ar@{^{(}->}[ru]
		}
	\]
	which means the two inclusions are the same,
	i.e., their images coincide.

	Conversely, given \(\mathcal{K}\hookrightarrow \mathcal{O}_{Z}^r\),
	dualising we get the class of a stable pair
	\(\mathcal{O}_{Z}^r\xlongrightarrow{s}\mathcal{K}^{\vee}\).
\end{proof}

\begin{definition}
	For a smooth projective variety \(Z\),
	\(M_Z(\ch)\) denotes the moduli space of stable pair classes \([\mathcal{E},s]\)
	with Chern character \(\ch\) on \(Z\).
	This is a projective scheme;
	see \cite{potier1993systemes}, \cite[Theorem~3.7]{sheshmani2016higher} and \cref{lem:auto_stable_pair}.
	When \(Z\) is a smooth projective curve, we simply write it as \(M_Z(r,n)\)
	where \(r\) is the rank and \(n\) is the degree of \(\mathcal{E}\).
	In this case the morphism
	\[
		M_Z(r,n)\longrightarrow \Hilb_Z^n
	\]
	sends \([\mathcal{E},s]\) to the divisor determined by \(\coker(s)\)
	(Quot-to-Chow morphism).
\end{definition}

\begin{lemma}\label{lem:moduli_quot_scheme}
	Assume \(Z\) is a smooth projective curve.
	Then \(M_Z(r,n)\) is isomorphic to \(\Quot(\mathcal{O}_{Z}^r,n)\),
	hence smooth.
\end{lemma}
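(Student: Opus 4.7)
The plan is to exhibit an isomorphism of functors (or just of the underlying moduli sets, then upgrade to schemes via the standard representability results) between $M_Z(r,n)$ and $\Quot(\mathcal{O}_Z^r,n)$, and then deduce smoothness from deformation theory.

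First I would invoke \cref{lem:class_equiv_inclusion}: since $Z$ is a smooth projective curve, torsion-free sheaves are locally free and in particular reflexive, so every class $[\mathcal{E},s]\in M_Z(r,n)$ corresponds bijectively to an inclusion $\mathcal{K}\hookrightarrow\mathcal{O}_Z^r$ with $\mathcal{K}\coloneqq\mathcal{E}^{\vee}$ locally free and $\coker(\mathcal{K}\hookrightarrow\mathcal{O}_Z^r)$ of rank zero. On a smooth curve a rank-zero coherent sheaf is a torsion sheaf supported at finitely many points, i.e.\ a length-$\ell$ sheaf for some $\ell\in\mathbb{N}$. Taking degrees in the short exact sequence shows $\ell=\deg\mathcal{E}=n$. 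Thus the data of $[\mathcal{E},s]$ is the same as the data of a short exact sequence
\[
0\longrightarrow\mathcal{K}\longrightarrow\mathcal{O}_Z^r\longrightarrow\mathcal{Q}'\longrightarrow 0
\]
with $\mathcal{Q}'$ a coherent sheaf with constant Hilbert polynomial $n$, i.e.\ exactly a $k$-point of $\Quot(\mathcal{O}_Z^r,n)$.

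Next I would promote this bijection to an isomorphism of schemes. The argument above is canonical and works in families: given any $k$-scheme $T$ and a flat family of quotients $\mathcal{O}_{Z\times T}^r\twoheadrightarrow\mathcal{Q}_T'$ with fibres length-$n$, the kernel $\mathcal{K}_T$ is $T$-flat and fibrewise locally free, hence locally free on $Z\times T$; dualising gives a flat family of stable pairs $\mathcal{O}_{Z\times T}^r\to\mathcal{K}_T^{\vee}$ with cokernel of relative dimension zero and relative length $n$. The construction is reversible (since $\mathcal{E}$ is fibrewise locally free, $\mathcal{E}^{\vee\vee}=\mathcal{E}$), producing mutually inverse natural transformations between the moduli functor of $M_Z(r,n)$ and the Quot functor. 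Representability of both (the Quot side by Grothendieck, the $M_Z$ side by the discussion cited before the lemma) then yields an isomorphism $M_Z(r,n)\simeq\Quot(\mathcal{O}_Z^r,n)$.

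Finally, smoothness follows from deformation theory of the Quot scheme. At a point $[\mathcal{O}_Z^r\twoheadrightarrow\mathcal{Q}']$ the tangent space is $\Hom(\mathcal{K},\mathcal{Q}')$ and the obstructions lie in $\Ext^1(\mathcal{K},\mathcal{Q}')$. Since $\mathcal{K}$ is locally free on the smooth curve $Z$, one has $\Ext^1(\mathcal{K},\mathcal{Q}')\simeq H^1(Z,\mathcal{K}^{\vee}\otimes\mathcal{Q}')$, and $\mathcal{K}^{\vee}\otimes\mathcal{Q}'$ is a torsion (zero-dimensional) sheaf, so its $H^1$ vanishes. Hence the Quot scheme is unobstructed at every point, i.e.\ smooth, and so is $M_Z(r,n)$. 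The only subtlety I anticipate is checking that the bijection of points really does lift to the level of families (the flatness and local-freeness verifications above), but because we are on a smooth curve and the cokernel is torsion, this is routine rather than a genuine obstacle.
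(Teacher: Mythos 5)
Your proposal is correct and follows essentially the same route as the paper: identify classes $[\mathcal{E},s]$ with inclusions $\mathcal{E}^{\vee}\hookrightarrow\mathcal{O}_Z^r$ via \cref{lem:class_equiv_inclusion}, match these with points of $\Quot(\mathcal{O}_Z^r,n)$, and obtain smoothness from the vanishing $\Ext^1(\mathcal{K},\mathcal{Q})\simeq H^1(Z,\mathcal{K}^{\vee}\otimes\mathcal{Q})=0$ for a zero-dimensional cokernel. Your extra care about promoting the pointwise bijection to the level of flat families is a welcome elaboration of a step the paper leaves implicit, but it does not change the argument.
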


\begin{proof}
	Assume \([\mathcal{E},s]\in M_Z(r,n)\).
	By \cref{lem:class_equiv_inclusion},
	this class naturally corresponds to the induced inclusion
	\[
		\mathcal{E}^{\vee}\hooklongrightarrow \mathcal{O}_{Z}^r,
	\]
	and if \([\mathcal{E},s]=[\mathcal{G},t]\),
	then
	\[
		\mathcal{E}^{\vee}\hooklongrightarrow \mathcal{O}_{Z}^r \quad \text{and}
		\quad \mathcal{G}^{\vee}\hooklongrightarrow \mathcal{O}_{Z}^r
	\]
	have equal images.
	Therefore the class \([\mathcal{E},s]\) corresponds uniquely
	to a rank zero quotient of \(\mathcal{O}_{Z}^r\)
	of degree \(n\),
	hence to a point of the Quot-scheme \(\Quot(\mathcal{O}_{Z}^r,n)\).

	For smoothness of \(\Quot(\mathcal{O}_{Z}^r,n)\),
	see \cite[Proposition~2.2.8]{huybrechts2010geometry},
	noting that for any short exact sequence
	\[
		0\longrightarrow \mathcal{K}\longrightarrow \mathcal{O}_{Z}^r\longrightarrow \mathcal{Q}\longrightarrow 0
	\]
	with \(\mathcal{Q}\) being rank zero one has
	\(\Ext^1(\mathcal{K},\mathcal{Q})\simeq H^{1}(Z,\mathcal{K}^{\vee}\otimes\mathcal{Q})=0\)
	by the Grothendieck vanishing.
\end{proof}

\begin{lemma}\label{lem:fibre_normal}
	Assume \(Z\) is a smooth projective curve.
	Assume that for some \(r,n\),
	the fibre of
	\[
		M_Z(r,n)\xlongrightarrow{\pi}\Hilb_Z^n
	\]
	over some point \(h\) is of dimension \(n(r-1)\) and smooth in codimension one.
	Then this fibre is an irreducible normal variety.
\end{lemma}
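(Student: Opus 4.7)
I plan to establish normality via Serre's criterion \((R_1) + (S_2)\), and then irreducibility by showing the fibre is connected via a Stein factorisation argument. Write \(X \coloneqq M_Z(r,n)\) and \(Y \coloneqq \Hilb_Z^n \cong \Sym^n Z\); by \cref{lem:moduli_quot_scheme} and the classical identification for symmetric products of smooth curves, both are smooth of dimensions \(rn\) and \(n\) respectively, and \(X\) is irreducible (as is well known for Quot-schemes on smooth curves). Denote \(F = \pi^{-1}(h)\).

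First I would record equidimensionality of \(F\). By the fibre dimension theorem, every irreducible component of \(F\) has dimension at least \(\dim X - \dim Y = n(r-1)\), so combined with the hypothesis \(F\) is equidimensional of this dimension. Choosing \(n\) local parameters of \(Y\) at \(h\) and pulling them back to \(X\) produces \(n\) functions cutting out \(F\) scheme-theoretically near each \(x\in F\); since their zero locus has codimension exactly \(n\) in the Cohen-Macaulay local ring \(\mathcal{O}_{X,x}\), they form a regular sequence there. Hence \(F\) is a local complete intersection in the smooth \(X\), so is itself Cohen-Macaulay and in particular satisfies \((S_2)\). Combined with \((R_1)\) from the hypothesis, Serre's criterion yields that \(F\) is normal.

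For irreducibility it remains to show \(F\) is connected, since the irreducible components of a normal scheme are pairwise disjoint. I would use the Stein factorisation
\[
	\pi\colon X \xlongrightarrow{g} Y' \xlongrightarrow{\tau} Y,
\]
where \(g\) has connected fibres and \(\tau\) is finite. Over a generic reduced divisor \(q_1 + \cdots + q_n \in Y\), the fibre of \(\pi\) parametrises a length-one quotient \(\mathcal{O}_Z^r \twoheadrightarrow k(q_i)\) independently at each \(q_i\), and such a quotient is classified by a point of \(\mathbb{P}^{r-1}\); so this generic fibre is \((\mathbb{P}^{r-1})^n\), which is connected. Hence \(\tau\) is generically one-to-one, thus birational; being finite over the normal \(Y\), it is an isomorphism by Zariski's main theorem. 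Therefore every fibre of \(\pi\), including \(F\), is connected, completing the proof.

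The Cohen-Macaulay reduction is the technical heart of the argument and goes through cleanly once equidimensionality is verified. I expect the one point requiring a little care to be the generic fibre identification, which reduces via the product structure at distinct points to the standard classification of length-one quotients of \(k^r\) as \(\mathbb{P}^{r-1}\).
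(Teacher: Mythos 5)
Your proof is correct and follows essentially the same route as the paper's: realise the fibre as a local complete intersection inside the smooth Quot-scheme (the paper cuts with general ample divisors through \(h\) where you pull back local parameters at \(h\), which amounts to the same thing), deduce Cohen--Macaulayness and hence \((S_2)\), and conclude normality from Serre's criterion together with the hypothesised \((R_1)\). The only difference is that where the paper simply asserts that \(\pi\) has connected fibres to get irreducibility, you justify this via Stein factorisation and the identification of the generic fibre with \((\mathbb{P}^{r-1})^n\); this is a welcome amplification of the same argument rather than a genuinely different one.
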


\begin{proof}
	First note that \(\Hilb_Z^n\) is just the \(n\)-th symmetric product of \(Z\),
	hence it is smooth of dimension \(n\)
	(for \(Z=\mathbb{P}^1\), \(\Hilb_Z^n\simeq \mathbb{P}^n\)).
	Pick general ample divisors \(H_{1}, \dots, H_{n}\) through \(h\).
	Then \(H_{i}\) are smooth intersecting transversally at \(h\).

	Since \(\dim\Quot(\mathcal{O}_{Z}^r,n)=nr\),
	general fibres of \(\pi\) are of dimension \(n(r-1)\)
	(in fact, general fibres of \(\pi\) are \((\mathbb{P}^{r-1})^n\)).
	By upper semi-continuity of fibre dimension,
	every fibre of \(\pi\) over some neighbourhood of \(h\) is of dimension \(n(r-1)\).
	Thus we can see that
	\[
		\text{fibre over } h = \pi^{*}H_1 \cap \dots \cap \pi^{*}H_n
	\]
	is a locally complete intersection and
	hence Cohen-Macaulay \cite[Chapter~II, Proposition~8.23]{hartshorne1977algebraic}.
	And it is smooth in codimension one by assumption.
	Therefore,
	it is normal by Serre criterion \cite[Chapter~II, Proposition~8.23]{hartshorne1977algebraic}.
	Here we are using smoothness of \(M_Z(r,n)\) (see \cref{lem:moduli_quot_scheme}).
	Then the fibre is irreducible as \(\pi\) has connected fibres.
\end{proof}

\section{Models associated to a sheaf stable pair}

\begin{definition}\label{def:associated_model}
	Assume \([\mathcal{E},s]\) is a stable pair of rank \(r\) on a variety \(Z\),
	with \(\mathcal{E}\) \emph{locally free}.
	Let \(s_{1}, \dots, s_{r}\) be the sections of \(\mathcal{E}\) determined by \(s\).
	We introduce the notation
	\[
		\begin{array}{l}
			X = \mathbb{P}(\mathcal{E}) \xlongrightarrow{f} Z, \vspace{0.2em}           \\
			\mathcal{O}_{X}(1) = \text{the associated invertible sheaf}, \vspace{0.2em} \\
			D_{i} = \text{divisor of } s_{i}, \vspace{0.2em}                            \\
			A = \text{divisor of } s_1+\dots+s_r.
		\end{array}
	\]
	Here we view \(s_{i}\) as sections of \(\mathcal{O}_{X}(1)\)
	via the morphism \eqref{eq:natural_surjection},
	so \(D_{i}\) is the divisor of this section (similarly for \(s_1+\dots+s_r\)).
	We call
	\[
		X, D_1,\dots,D_r, A \xlongrightarrow{f} Z
	\]
	the \emph{model associated to \([\mathcal{E},s]\)}.

	Now assume \(\mathcal{G},t\) is another stable pair on \(Z\) with \(\mathcal{G}\) locally free.
	Let
	\[
		Y, E_{1}, \dots, E_{r}, C \longrightarrow Z
	\]
	be its associated model.
	We say the above two associated models are \emph{isomorphic}
	if there is an isomorphism over \(Z\)
	\[
		\xymatrix{
			X \ar[rr] \ar[rd] & & Y \ar[ld] \\
			& Z
		}
	\]
	mapping \(\mathcal{O}_{X}(1)\) to \(\mathcal{O}_{Y}(1)\) and
	mapping \(D_{i}\) to \(E_{i}\) and \(A\) to \(C\).
\end{definition}

\begin{lemma}
	Assume \(\mathcal{E},s\) and \(\mathcal{G},t\) are stable pairs on a normal variety \(Z\),
	with \(\mathcal{E},\mathcal{G}\) locally free.
	Then \([\mathcal{E},s]=[\mathcal{G},t]\) if and only if the associated models of
	\(\mathcal{E},s\) and \(\mathcal{G},t\) are isomorphic.
\end{lemma}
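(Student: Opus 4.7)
The proof splits into two implications. The forward direction is essentially formal: given an isomorphism $\psi\colon\mathcal{E}\xrightarrow{\sim}\mathcal{G}$ with $\psi\circ s=t$, the induced map $\varphi\coloneqq\mathbb{P}(\psi)\colon X\xrightarrow{\sim}Y$ over $Z$ comes with a canonical identification $\varphi^{*}\mathcal{O}_{Y}(1)\simeq\mathcal{O}_{X}(1)$ that intertwines the two instances of the natural surjection \eqref{eq:natural_surjection}. Under this identification the section of $\mathcal{O}_{Y}(1)$ attached to $t_i$ pulls back to the one attached to $s_i$, so $\varphi^{*}E_i=D_i$ and $\varphi^{*}C=A$.

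For the reverse implication, assume $\varphi\colon X\xrightarrow{\sim}Y$ realises the isomorphism of associated models, together with a chosen line bundle identification $\eta\colon\varphi^{*}\mathcal{O}_{Y}(1)\xrightarrow{\sim}\mathcal{O}_{X}(1)$. Because $g\circ\varphi=f$ and $f_{*}\mathcal{O}_{X}=g_{*}\mathcal{O}_{Y}=\mathcal{O}_{Z}$, pushing $\eta$ down yields an isomorphism
\[
\psi\colon\mathcal{G}=g_{*}\mathcal{O}_{Y}(1)\xrightarrow{\sim}f_{*}\varphi^{*}\mathcal{O}_{Y}(1)\xrightarrow{f_{*}\eta}f_{*}\mathcal{O}_{X}(1)=\mathcal{E}.
\]
Tracing through, the sections $\psi^{-1}(s_i)$ and $t_i$, viewed as global sections of $\mathcal{O}_{Y}(1)$, share the same vanishing divisor $E_i$ since $\varphi$ carries $D_i$ to $E_i$. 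As $Y$ is integral, two global sections of a line bundle with identical zero divisors differ by a unit in $H^{0}(Y,\mathcal{O}_{Y})^{\times}=H^{0}(Z,\mathcal{O}_{Z})^{\times}$. Hence $\psi^{-1}(s_i)=\lambda_i t_i$ for units $\lambda_i$, and analogously $\varphi^{*}C=A$ yields $\psi^{-1}(s_1+\cdots+s_r)=\lambda(t_1+\cdots+t_r)$ for a single unit $\lambda$.

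It remains to merge these constants. Applying $\psi^{-1}$ to $s_1+\cdots+s_r$ and using its linearity gives $\sum_i(\lambda_i-\lambda)t_i=0$ in $H^{0}(Z,\mathcal{G})$. Since $\dim\Supp\coker(t)<\dim Z$, the morphism $t$ is generically an isomorphism, so $t_1,\ldots,t_r$ form a basis of the generic stalk $\mathcal{G}_{\eta}$; evaluating the above relation at $\eta$ forces each $\lambda_i-\lambda$ to vanish at $\eta$, and hence globally by integrality of $Z$. With $\lambda_1=\cdots=\lambda_r=\lambda$ in hand, \cref{lem:scaling_matrix_equivalent} (applied with the scalar matrix $\lambda\cdot\id$) delivers $[\mathcal{E},s]=[\mathcal{G},t]$.

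The main obstacle is exactly this last merging step: each divisor equality $D_i=E_i$ individually determines the induced line bundle identification only up to an independent unit $\lambda_i$, so without the sum-section datum $A=C$ one would only recover equivalence up to an arbitrary diagonal automorphism of $\mathcal{O}_{Z}^{r}$ with $r$ free parameters. The remaining ingredients — the pushforward identification $\mathcal{G}\simeq\mathcal{E}$, the translation of divisor equalities into section equalities, and $H^{0}(X,\mathcal{O}_{X})^{\times}=H^{0}(Z,\mathcal{O}_{Z})^{\times}$ — are formal consequences of the projective bundle construction and $f_{*}\mathcal{O}_{X}=\mathcal{O}_{Z}$.
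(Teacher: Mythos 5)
Your proposal is correct and follows essentially the same route as the paper: the forward direction is formal, and for the converse both arguments recover the isomorphism $\mathcal{G}\to\mathcal{E}$ from the bundle isomorphism, observe that each divisor equality only determines the section up to a unit $\lambda_i$, and then use the sum-section datum $A=C$ together with the linear independence of $t_1,\dots,t_r$ (coming from injectivity of $t$) to force all $\lambda_i$ to coincide before invoking \cref{lem:scaling_matrix_equivalent}. Your variant of working with units in $H^0(Z,\mathcal{O}_Z)^{\times}$ and resolving the relation at the generic point is, if anything, slightly more careful than the paper's appeal to scalars in $k\setminus\{0\}$, but it is the same argument.
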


\begin{proof}
	Assume \([\mathcal{E},s]=[\mathcal{G},t]\).
	Then there exists an isomorphism \(\mathcal{E}\to \mathcal{G}\)
	sending the corresponding sections
	\(s_{1}, \dots, s_{r}\) to \(t_{1}, \dots, t_{r}\).
	This induces an isomorphism
	\begin{equation}\label{eq:induced_isom}
		\xymatrix{
			\mathbb{P}(\mathcal{G}) \ar[rr]^{\sim} \ar[rd] & & \mathbb{P}(\mathcal{E}) \ar[ld] \\
			& Z
		}
	\end{equation}
	mapping \(\mathcal{O}_{\mathbb{P}(\mathcal{G})}(1)\)
	to \(\mathcal{O}_{\mathbb{P}(\mathcal{E})}(1)\)
	and mapping the divisor of \(t_{i}\) to the divisor of \(s_{i}\).
	It also maps the divisor of \(t_1+\dots+t_r\) to the divisor of \(s_1+\dots+s_r\).

	Conversely, assume the associated models are isomorphic.
	So we have an isomorphism as in \eqref{eq:induced_isom}
	mapping \(\mathcal{O}_{\mathbb{P}(\mathcal{G})}(1)\)
	onto \(\mathcal{O}_{\mathbb{P}(\mathcal{E})}(1)\)
	and mapping the divisor of \(t_{i}\) to the divisor of \(s_{i}\)
	and divisor of \(t_1+\dots+t_r\) to divisor of \(s_1+\dots+s_r\).
	This gives an isomorphism \(\mathcal{E}\to \mathcal{G}\).
	However, divisor of a section does not determine the section,
	it determines it only up to scaling
	\cite[Chapter~II, Proposition~7.7]{hartshorne1977algebraic}
	(as \(\mathbb{P}(\mathcal{E})\), \(\mathbb{P}(\mathcal{G})\) are normal),
	i.e., there exist \(\lambda_{i}, \lambda \in k\setminus \{0\}\)
	such that \(s_{i}\) is mapped to \(\lambda_{i}t_{i}\) and
	\(s_1+\dots+s_r\) is mapped to \(\lambda(t_1+\dots+t_r)\).
	But then
	\[
		\sum \lambda_{i}t_{i} = \lambda(t_1+\dots+t_r).
	\]
	However, the sections \(t_1, \dots, t_r\) are linearly independent over \(k\)
	as \(\mathcal{G},t\) is a stable pair
	so \(\mathcal{O}_{Z}^r\xlongrightarrow{t}\mathcal{G}\) is injective.
	Therefore, \(\lambda_{i}=\lambda\) and \(s_{i}\) is mapped to \(\lambda t_{i}\) for all \(i\).
	This shows \([\mathcal{E},s]=[\mathcal{G},t]\) by \cref{lem:scaling_matrix_equivalent}.
\end{proof}

Next we discuss connection with the theory of stable minimal models.

\begin{definition}
	Assume \(Z\) is a variety.
	In this paper a (lc) \emph{stable minimal model} over \(Z\) is of the form
	\[
		(X,B), A \xlongrightarrow{f} Z
	\]
	where
	\[
		\begin{cases}
			(X,B) \text{ is a log canonical pair equipped with a projective morphism $X \xlongrightarrow{f} Z$}, \\
			K_X+B \text{ is semi-ample over $Z$},                                                                \\
			A\geq 0 \text{ is an integral divisor on } X,                                                        \\
			K_X+B+uA \text{ is ample over } Z \text{ for } 0<u\ll 1,                                             \\
			(X,B+uA) \text{ is log canonical for } 0<u\ll 1.
		\end{cases}
	\]
\end{definition}

For more details see \cite{birkar2022moduli}.
Note however that the above definition is in the relative situation while the setting in \cite{birkar2022moduli} is global.
Also one should not confuse the above notation with the one in \cite{birkar2022moduli};
in the setting above, $Z$ is a fixed base and $K_X+B$ is semi-ample over $Z$ defining a contraction $X\longrightarrow S/Z$;
we have suppressed $S$ in the notation because in this paper we usually deal with the situation where $S=Z$.

\begin{example}\label{exa:stable_minimal_model}
	Assume \(Z=\Spec k\) is a point,
	and \(\mathcal{E},s\) is stable of rank \(r\).
	Then \(\mathcal{E}\) is a \(k\)-vector space of dimension \(r\).
	And \(\mathcal{O}_{Z}^r\xlongrightarrow{s}\mathcal{E}\) is an isomorphism.
	Let
	\[
		X, D_{1}, \dots, D_{r}, A \longrightarrow Z
	\]
	be the associated model.
	Put \(B=\sum D_{i}\).

	We want to argue that \((X,B), A\) is a stable minimal model.
	Identifying \(\mathcal{E}\) with \(k^r\)
	via \(\mathcal{O}_{Z}^r\xlongrightarrow{s}\mathcal{E}\),
	we can assume
	\[
		X=\mathbb{P}^{r-1}=\Proj k[s_{1}, \dots, s_{r}].
	\]
	Then \(D_{1}, \dots, D_{r}\) are the standard coordinate hyperplanes,
	hence \((X,B)\) is lc
	(this is the standard toric structure on \(\mathbb{P}^{r-1}\)).
	To show \((X,B),A\) is a stable minimal model,
	it is enough to show \(s_1+\dots+s_r\) does not identically vanish on \(\bigcap_{i\in I}D_{i}\)
	for any subset \(I\subseteq \{1,\dots,r\}\).
	Assume not.
	Then extending \(I\),
	we can assume \(\abs{I}=r-1\) in which case \(\bigcap_{i \in I} D_{i}\) is one point.
	We may assume \(I=\{1,\dots,r-1\}\).
	Then \(s_1+\dots+s_r\) vanishes on \(\bigcap_{i \in I} D_{i}=(0:\dots:0:1)\)
	which is not the case, a contradiction.
\end{example}

\begin{remark}
	Given a stable pair \(\mathcal{E},s\) on a variety \(Z\) with \(\mathcal{E}\) locally free,
	we can define a model \((X,B), A \to Z\) as in the previous example.
	The example shows that this model is a stable minimal model over some open subset of \(Z\).
	But we will see that it is often not a stable minimal model over the whole \(Z\)
	even when \(Z\) is a smooth curve and \(r=2\).
	It is however possible to modify the above model birationally and
	get a stable minimal model over the whole \(Z\).
	This is our next aim.
\end{remark}

\begin{construction}\label{setup:construction}
	Assume \(Z\) is a normal variety and \(\mathcal{E},s\) is a stable pair on \(Z\).
	We will associate a stable minimal model \((X',B'),A'\xlongrightarrow{f'}Z\).
	Let \(Q=\Supp\coker(s)\), and \(U=Z\setminus Q\).
	Since \(\mathcal{O}_{Z}^r\xlongrightarrow{s}\mathcal{E}\) is an isomorphism on \(U\),
	\(\mathcal{E}\) is locally free on \(U\).
	So by \cref{def:associated_model},
	we have an associated model
	\[
		X^U, D^U_{1}, \dots, D^U_{r}, A^U \longrightarrow U
	\]
	over \(U\).
	If \(F\) is the fibre over any closed point \(z\in U\),
	\[
		(F,\sum D^U_i|_F), A^U|_F
	\]
	is a stable minimal model, by \cref{exa:stable_minimal_model}.
	Letting \(B^U=\sum D_{i}^U\),
	we see that \((X^U,B^U),A^U\) is a stable minimal model over the \emph{smooth locus} of \(U\).
	But it may not be a stable minimal model over the whole \(Z\) (or even \(U\)).

	Let \(X\) be a \emph{compactification} of \(X^U\) over \(Z\).
	Denote \(X\to Z\) by \(f\).
	Take a log resolution \(X''\xlongrightarrow{\varphi}X\).
	We can take this resolution so that it is an isomorphism over the smooth locus of \(U\)
	and so that letting
	\[
		\begin{array}{l}
			B''=\Supp((B^U)^{\sim} + \Exc(\varphi) + \varphi^{-1}f^{-1}Q), \vspace{0.2em} \\
			A''=(A^U)^{\sim},
		\end{array}
		\quad (\sim \text{denotes birational transform})
	\]
	the pair \((X'',B''+A'')\) is log smooth.

	Run an MMP on \(K_{X''}+B''\) over \(Z\).
	Since \((X^U,B^U),A^U\to U\) is a stable minimal model over the smooth locus of \(U\),
	and since \(\varphi\) is an isomorphism over this locus,
	the MMP does not modify \(X''\) over this locus.

	Assume the MMP terminates with a good minimal model \(X'''\).
	Then \(K_{X'''}+B'''\) is semi-ample over \(Z\) defining a contraction \(X'''\to T\to Z\).
	The MMP is also an MMP on \(K_{X''}+B''+uA''\) for any \(0<u\ll 1\).
	Now run another MMP on \(K_{X'''}+B'''+uA'''\) over \(T\).
	Assume this terminates with a good minimal model \(X'\) over \(T\).
	Then \(A'\) is semi-ample over \(T\) defining a birational contraction,
	since \(A'\) is big over \(Z\) and hence big over \(T\).
	Replacing \(X'\) with the base of this contraction we get
	\[
		(X',B'), A' \longrightarrow Z
	\]
	which is a stable minimal model,
	noting that \(A'\) does not pass through any non-klt centre of \((X',B')\).
	Over the smooth locus of \(U\),
	\(X\dashrightarrow X'\) and \(X''\dashrightarrow X'\) are isomorphisms.
\end{construction}

\begin{proposition}
	Let \(Z\) be a smooth variety and \(\mathcal{E},s\) a stable pair of rank \(r\)
	with \(\mathcal{E}\) locally free
	and \(Q\) the divisor of \(\coker(s)\) being simple normal crossing.
	Then the associated stable minimal model
	\[
		(X',B'), A' \longrightarrow Z
	\]
	exists and it depends only on \(Z,r\) and \(\Supp Q\).
\end{proposition}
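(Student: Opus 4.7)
The plan is to follow Construction 4.4 and then leverage uniqueness of log canonical models. For existence, I apply the construction directly. Since $\mathcal{E}$ is locally free, $X = \mathbb{P}(\mathcal{E})$ already provides a projective compactification of $X^U$ over $Z$, and a log resolution $\varphi \colon X'' \to X$ making $(X'', B'' + A'')$ log smooth exists by Hironaka. I then verify termination and semi-ampleness for the two MMPs in the construction. For the first MMP on $K_{X''} + B''$ over $Z$: by Example 4.5 applied fibrewise, $(X^U, B^U), A^U$ is already a stable minimal model over $U = Z \setminus Q$ (using that $Z$, and hence $U$, is smooth), so the MMP acts nontrivially only over $Q$; termination with a good minimal model $X'''$ follows from existing results on MMP for log smooth (hence dlt) pairs in the relative setting. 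The second MMP, on $K_{X'''} + B''' + uA'''$ over $T$ for $0 < u \ll 1$, is klt and terminates by BCHM. The semi-ampleness of $A'$ over $T$ then defines the final birational contraction, producing the stable minimal model $(X', B'), A' \to Z$.

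For canonicity with respect to the choices in the construction, I observe that for $0 < u \ll 1$, the pair $(X', B' + uA')$ is the log canonical model of $(X'', B'' + uA'')$ over $Z$: the divisor $K_{X'} + B' + uA'$ is ample over $Z$ by the stable minimal model conditions, and $X'' \dashrightarrow X'$ is a birational contraction negative with respect to $K_{X''} + B'' + uA''$. Log canonical models are unique up to isomorphism when they exist, so the stable minimal model $(X', B'), A'$ is independent of all choices in its construction (log resolution, MMP steps, scaling).

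The harder claim is dependence only on $(Z, r, \Supp Q)$. Given two stable pairs $(\mathcal{E}_1, s_1)$ and $(\mathcal{E}_2, s_2)$ with the same ambient $Z$, rank $r$, and cokernel support, I plan to show that their log canonical models are isomorphic. The key tool is a local analysis near $\Supp Q$: by the SNC hypothesis, étale-locally $(Z, \Supp Q)$ takes a standard normal-crossing form, and the stable pair reduces to a matrix of local coordinate functions whose determinant support matches $\Supp Q$. After passing to a common log resolution dominating both setups, the result reduces to verifying that the local log canonical model depends only on $r$ and the combinatorial type of the local SNC configuration. The main obstacle is controlling the horizontal divisors $D_i$: different stable pairs produce different strict transforms on $X''$ with potentially different intersection patterns and vertical components (as illustrated in Cases I, II, III of the introduction). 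The rigorous argument would combine local SNC computations with the birational invariance of log canonical models to verify that only the combinatorial (toric) skeleton determined by $\Supp Q$ survives in the final model.
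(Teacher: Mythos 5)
Your existence argument (following Construction 4.4, with termination from BCHM and canonicity from uniqueness of log canonical models) is reasonable, but the central claim of the proposition --- dependence only on \(Z\), \(r\) and \(\Supp Q\) --- is where your proposal has a genuine gap. You explicitly flag the obstacle yourself: different stable pairs produce horizontal divisors \(D_i\) with different intersection patterns and vertical components, and you propose to handle this by an \'etale-local SNC analysis whose ``rigorous argument would combine local SNC computations with the birational invariance of log canonical models.'' That step is never carried out, and it is not clear it can be completed along those lines: a purely local normal-crossing reduction does not by itself explain why the wildly varying configurations of the \(D_i\) (tangencies, common vertical components, etc.) all collapse to the same lc model.

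The paper circumvents this entirely by \emph{exhibiting} the stable minimal model rather than characterising it abstractly. One takes the trivial pair \(\mathcal{O}_Z^r\xrightarrow{\id}\mathcal{O}_Z^r\), whose associated model is \(X'=Z\times\mathbb{P}^{r-1}\) with the coordinate hyperplanes \(D_i'\) and \(A'\), and sets \(B'=\sum D_i'+f'^{*}\Supp Q\); this datum manifestly depends only on \(Z\), \(r\), \(\Supp Q\). The rational map \(\alpha\colon X\dashrightarrow X'\) defined by the sections \(s_1,\dots,s_r\) (Remark 3.13) pulls back \(t_i\) to \(s_i\), hence \(D_i=\alpha^{*}D_i'\) and \(A=\alpha^{*}A'\). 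Consequently, on a log resolution \(X''\) dominating both \(X\) and \(X'\), one gets \(B''=\Supp(B'^{\sim}+\Exc(\psi))\) where \(\psi\colon X''\to X'\); running the MMP on \(K_{X''}+B''\) \emph{over \(X'\)} then lands on a dlt model of \((X',B')\), and \((X',B'+uA')\) is the lc model of \((X'',B''+uA'')\) for \(0<u\ll 1\). This identity \(D_i=\alpha^{*}D_i'\) is exactly the mechanism that absorbs all the variation in the intersection patterns of the \(D_i\) that you identify as the main obstacle: whatever they look like on \(X\), they are crepant over the fixed configuration on \(X'\). Without this (or an equivalent) global identification, your outline does not yet constitute a proof of the independence statement.
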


\begin{proof}
	Consider the identity morphism \(\mathcal{O}_{Z}^r\to\mathcal{O}_{Z}^r\) and
	the associated model
	\[
		X', D'_{1}, \dots, D'_{r}, A' \xlongrightarrow{f'} Z
	\]
	as in \cref{def:associated_model}.
	Let \(B'=\sum D_{i}'+f'^{*}\Supp Q\).
	Let
	\[
		X, D_{1}, \dots, D_{r}, A \xlongrightarrow{f} Z
	\]
	be the associated model of \(\mathcal{E},s\) as in \cref{def:associated_model}.
	Set \(B=\Supp(\sum D_{i}+f^{-1}Q)\).

	By \cref{rem:rational_map}, there is a rational map
	\[
		\xymatrix{
			X \ar[rd]_f \ar@{-->}[rr]^{\alpha} & & X' \ar[ld]^{f'} \\
			& Z
		}
	\]
	induced by the sections \(s_{1}, \dots, s_{r}\) determined by \(s\).

	Let \(t_{1}, \dots, t_{r}\) be the sections of \(\mathcal{O}_{Z}^r\)
	determined by the summand \(\mathcal{O}_{Z}\).
	The morphism
	\[
		\mathcal{O}_{Z}^r \xlongrightarrow{s} \mathcal{E}
	\]
	maps \(t_{i}\) to \(s_{i}\).
	Viewing \(t_{i}\) as sections of \(\mathcal{O}_{X'}(1)\) and
	\(s_{i}\) as sections of \(\mathcal{O}_{X}(1)\),
	the above rational map \(\alpha\) pulls back \(t_{i}\) to \(s_{i}\).
	This shows \(D_{i}=\alpha^{*}D_{i}'\), \(A=\alpha^{*}A'\).
	Moreover, \(\alpha\) is an isomorphism over \(U\)
	where \(U=Z\setminus\Supp Q\).

	Let \(X''\xlongrightarrow{\varphi}X\) be a log resolution of \((X,B+A)\)
	so that it is an isomorphism over \(U\).
	Let
	\[
		B''=\Supp(B^{\sim}+\Exc(\varphi)+\varphi^{-1}f^{-1}Q)
	\]
	and let \(A''\) be the horizontal part of \(A^{\sim}\).
	We can assume \(X''\xlongrightarrow{\psi}X'\) is a morphism.

	Then \(\psi_{*}B''=B'\).
	In fact, \(B''=\Supp(B'^{\sim}+\Exc(\psi))\).
	Thus, we can run an MMP on \(K_{X''}+B''\) over \(X'\)
	ending with a dlt model of \((X',B')\), say \((Y,B_Y)\).
	Moreover, since \(A'\) does not contain any non-klt centre of \((X',B')\),
	the pullback of \(A'\) to \(Y\), say \(A_Y\), is the birational transform of \(A'\).
	Therefore, \((X',B'+uA')\) is the lc model of
	both \((Y,B_Y+uA_Y)\) and \((X'',B''+uA'')\) for any \(0<u\ll 1\).
	So \((X',B'),A'\to Z\) is the stable minimal model associated to \(\mathcal{E},s\).
\end{proof}

\begin{remark}
	Under the assumption of \cref{setup:construction},
	assume there exists an effective Cartier divisor \(L\) on \(Z\) with \(\Supp L=Q\)
	and \(U\) is smooth.
	For example this holds when \(X\) is smooth.
	Then one can show that we can run MMPs
	as in \cref{setup:construction} terminating with good minimal models
	as required.
\end{remark}

\begin{lemma}\label{lem:intersection_of_two_sections_deg}
	Let \(\mathcal{E},s\) be a stable pair of rank \(2\) on a smooth projective curve \(Z\).
	Let
	\[
		X, D_1,D_2, A \xlongrightarrow{f} Z
	\]
	be the model of \(\mathcal{E},s\) as in \cref{def:associated_model}.
	Then \(D_1\cdot D_2=\deg \mathcal{E}\).
\end{lemma}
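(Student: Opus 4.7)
The plan is to reduce $D_1 \cdot D_2$ to the self-intersection $\xi^2$ of $\xi := c_1(\mathcal{O}_X(1))$ on the ruled surface $X = \mathbb{P}(\mathcal{E})$, and then compute $\xi^2$ via the Grothendieck relation.

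First I would observe that, by the construction in \cref{def:associated_model}, the divisors $D_1$ and $D_2$ are the zero loci of the two global sections of $\mathcal{O}_X(1)$ obtained from $s_1, s_2 \in H^0(Z,\mathcal{E})$ via the natural surjection $f^{*}\mathcal{E} \twoheadrightarrow \mathcal{O}_X(1)$ of \eqref{eq:natural_surjection}. In particular $D_1, D_2 \in |\mathcal{O}_X(1)|$, and since $X$ is a smooth projective surface, intersection numbers are well-defined from numerical classes, so
\[
	D_1 \cdot D_2 \;=\; c_1(\mathcal{O}_X(1))^2 \;=\; \xi^2.
\]

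Next I would compute $\xi^2$ using the defining relation of the Chow ring of the rank-$2$ projective bundle $f\colon X = \mathbb{P}(\mathcal{E}) \to Z$, namely
\[
	\xi^2 \;=\; f^{*}c_1(\mathcal{E}) \cdot \xi \;-\; f^{*}c_2(\mathcal{E}).
\]
Since $\dim Z = 1$, one has $c_2(\mathcal{E}) = 0$. Writing $F$ for the class of a fibre of $f$ (so that $f^{*}c_1(\mathcal{E}) = (\deg\mathcal{E}) F$), and noting that $\mathcal{O}_X(1)|_F \simeq \mathcal{O}_{\mathbb{P}^1}(1)$ has degree $1$ so that $F \cdot \xi = 1$, I obtain
\[
	\xi^2 \;=\; (\deg\mathcal{E})\,(F \cdot \xi) \;=\; \deg\mathcal{E}.
\]

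Finally, to reconcile conventions, I would check that $\deg \mathcal{E}$ as defined in the paper in terms of $\coker(s)$ really coincides with $c_1(\mathcal{E})$: from the short exact sequence $0 \to \mathcal{O}_Z^2 \xrightarrow{s} \mathcal{E} \to \coker(s) \to 0$ one reads off $c_1(\mathcal{E}) = c_1(\coker(s)) = \length\coker(s)$, so the two notions of $\deg \mathcal{E}$ agree and the lemma follows. There is no real obstacle in this argument; the only points one must be careful about are (i) that $D_1, D_2$ really represent the class of $\mathcal{O}_X(1)$, which is immediate from the construction, and (ii) the sign convention in the Grothendieck relation, which is fixed by the standard identification $f_{*}\mathcal{O}_X(1) = \mathcal{E}$.
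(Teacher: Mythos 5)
Your proposal is correct, and it takes a genuinely different (and shorter) route than the paper. You reduce everything to the Chow ring of the $\mathbb{P}^1$-bundle: since $D_1,D_2\in|\mathcal{O}_X(1)|$, the intersection number is $\xi^2$ with $\xi=c_1(\mathcal{O}_X(1))$, and the Grothendieck relation $\xi^2-f^{*}c_1(\mathcal{E})\cdot\xi+f^{*}c_2(\mathcal{E})=0$ together with $c_2(\mathcal{E})=0$ on a curve gives $\xi^2=\deg\mathcal{E}$ at once. The paper instead starts from the relative canonical bundle formula in the form $K_X+D_1+D_2\sim f^{*}(K_Z+\det\mathcal{E})$, intersects both sides with $D_1$, and evaluates the left-hand side by splitting $D_1$ into its horizontal part $T_1$ and vertical part $P$, using $(K_X+2T_1)\equiv 0$ over $Z$ to kill the vertical contribution; the genus terms $2g-2$ then cancel to leave $D_1\cdot D_2=\deg\mathcal{E}$. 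The two arguments encode the same underlying structure of $\mathbb{P}(\mathcal{E})$ (the relation $K_X=-2\xi+f^{*}(K_Z+\det\mathcal{E})$ versus the defining relation of the Chow ring), but yours avoids the horizontal/vertical decomposition entirely and, being purely a computation with linear-equivalence classes, automatically handles the case where $D_1$ and $D_2$ share vertical components. Your closing check that the paper's $\deg\mathcal{E}$ (defined via $\coker(s)$) equals $c_1(\mathcal{E})$, read off from the exact sequence $0\to\mathcal{O}_Z^2\to\mathcal{E}\to\coker(s)\to 0$, is also the right point to pin down and is consistent with the paper's usage.
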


\begin{proof}
	We have a formula
	\[
		K_X+D_1+D_2\sim f^{*}(K_Z+\det \mathcal{E}).
	\]
	Let \(T_1\) be the horizontal part of \(D_1\).
	We can write \(D_1=T+P\) where \(P\geq 0\) is vertical.
	Then
	\begin{align*}
		(K_X+D_1+D_2) & = f^{*}(K_Z+\det\mathcal{E})\cdot D_1=f^{*}(K_Z+\det\mathcal{E})\cdot T_1 \\
		              & = 2g-2 + \deg \mathcal{E}
	\end{align*}
	where \(g\) is the genus of \(Z\).
	Moreover,
	\begin{align*}
		(K_X+D_1+D_2)\cdot D_1 & = (K_X+D_1) \cdot D_1 + D_1\cdot D_2                    \\
		                       & = (K_X+T_1+P)\cdot (T_1+P) + D_1\cdot D_2               \\
		                       & = (K_X+T_1)\cdot T_1 + (K_X+2T_1)\cdot P + D_1\cdot D_2 \\
		                       & = 2g-2 + D_1\cdot D_2
	\end{align*}
	where we use the fact \((K_X+2T_1)\equiv 0 /Z\).
	Therefore, \(D_1\cdot D_2=\deg \mathcal{E}\).
\end{proof}

\section{Stable pairs on curves with fixed cokernel divisor}

In this section we study stable pairs on a curve
with cokernel given by a fixed divisor on the curve.
This is motivated by the construction of stable minimal models
explained in the previous section.

Given a stable pair \(\mathcal{E},s\) on a smooth curve,
assume \(Q=\sum_{1}^{\ell}n_{j}q_{j}\) is the divisor determined by \(\coker(s)\).
Going from the associated model
\[
	X, D_{1}, \dots, D_{r}, A \longrightarrow Z
\]
to the stable minimal model
\[
	(X',B'), A' \longrightarrow Z
\]
resolves the singularities of sections \(s_{1}, \dots, s_{r}\) determined by \(s\),
and untwists \(\mathcal{E}\) to turn it into \(\mathcal{O}_{Z}^r\).
This procedure makes modifications only over the points \(q_{j}\) and
in an independent way over different \(q_{j}\).
This leads to the following statement which will be proved in a more direct fashion.

\begin{theorem}\label{thm:moduli_of_product}
	Let \(Z\) be a smooth curve and \(Q=\sum_{1}^{\ell}n_{j}q_{j}\) an effective divisor.
	Let
	\begin{align*}
		M_{Q} & = \{[\mathcal{E},s] \mid \mathcal{E},s
		\text{ is a stable pair of rank } r \text{ with cokernel divisor } Q\}, \\
		M_{j} & = \{[\mathcal{E},s] \mid \mathcal{E},s
		\text{ is a stable pair of rank } r \text{ with cokernel divisor } n_{j}q_{j}\}.
	\end{align*}
	Then \(M_{Q}\) and \(M_{j}\) carry natural projective scheme structures and
	\[
		M_{Q}\simeq M_1\times \dots \times M_{\ell}.
	\]
\end{theorem}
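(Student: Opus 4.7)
The plan is to reduce everything to Quot-schemes via \cref{lem:moduli_quot_scheme,lem:class_equiv_inclusion} and then exploit the canonical direct-sum decomposition of a coherent sheaf whose support is a disjoint union of closed subsets.

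First, I would endow $M_Q$ and $M_j$ with projective scheme structures by defining them as the scheme-theoretic fibres
\[
  M_Q \coloneqq \pi^{-1}(Q) \subseteq M_Z(r, n_Q), \qquad M_j \coloneqq \pi_j^{-1}(n_j q_j) \subseteq M_Z(r, n_j),
\]
where $\pi, \pi_j$ are the Quot-to-Chow morphisms of \cref{lem:moduli_quot_scheme} and $n_Q = \sum_j n_j$. Each is a closed subscheme of a projective scheme, hence projective.

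Next, I would construct a morphism $\Phi \colon M_Q \to M_1 \times \cdots \times M_\ell$ functorially. For a test scheme $T$, a $T$-point of $M_Q$ corresponds via \cref{lem:moduli_quot_scheme,lem:class_equiv_inclusion} to a $T$-flat surjection $\mathcal{O}_{Z \times T}^r \twoheadrightarrow \mathcal{Q}_T$ whose cokernel divisor on every geometric fibre over $T$ equals $Q = \sum_j n_j q_j$. The set-theoretic support of $\mathcal{Q}_T$ is then contained in $\bigsqcup_j (\{q_j\} \times T)$, a disjoint union of closed subsets of $Z \times T$. Choosing pairwise disjoint open neighbourhoods $U_j \ni q_j$ in $Z$ one obtains a canonical decomposition
\[
  \mathcal{Q}_T = \bigoplus_{j=1}^{\ell} \mathcal{Q}_{T, j}, \qquad \mathcal{Q}_{T, j} \coloneqq \iota_{j *}\bigl(\mathcal{Q}_T|_{U_j \times T}\bigr),
\]
where $\iota_j \colon U_j \times T \hookrightarrow Z \times T$ is the inclusion. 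Each summand is $T$-flat as a direct summand of a flat sheaf, and the composition $\mathcal{O}_{Z \times T}^r \twoheadrightarrow \mathcal{Q}_T \twoheadrightarrow \mathcal{Q}_{T, j}$ defines a $T$-point of $M_j$; collecting these and invoking Yoneda yields $\Phi$.

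For the inverse $\Psi \colon M_1 \times \cdots \times M_\ell \to M_Q$, given $T$-families $\mathcal{O}_{Z \times T}^r \twoheadrightarrow \mathcal{Q}_{T, j}$ with $\mathcal{Q}_{T, j}$ supported on $\{q_j\} \times T$ and cokernel divisor $n_j q_j$ on each fibre, I would take the diagonal morphism $\mathcal{O}_{Z \times T}^r \to \bigoplus_j \mathcal{Q}_{T, j}$. Stalk-wise this is surjective: at a point of $\{q_k\} \times T$ only the $k$-th summand has nonzero stalk and the original map is surjective there, while elsewhere the target vanishes. The cokernel divisor of the sum is $\sum_j n_j q_j = Q$ on every fibre, giving a $T$-point of $M_Q$ naturally in $T$, hence $\Psi$. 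Both compositions are identities since, for a torsion sheaf supported on a disjoint union of closed subsets, decomposition and direct summation are mutually inverse canonical operations. The main technical obstacle is justifying the decomposition $\mathcal{Q}_T = \bigoplus_j \mathcal{Q}_{T, j}$ \emph{uniformly in families} together with $T$-flatness of each summand and the correct fibrewise cokernel divisor; this reduces to the fact that the closed support of $\mathcal{Q}_T$ in $Z \times T$ is contained in the disjoint union $\bigsqcup_j (\{q_j\} \times T)$, which follows from the fibrewise constancy of the cokernel divisor and propriety of $Z$ over $\Spec k$.
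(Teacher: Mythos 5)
Your proposal is correct in its core, but it takes a genuinely different route from the paper's. The paper stays on the side of the stable pairs themselves: it localises $[\mathcal{E},s]$ at each $q_j$ by gluing $\mathcal{E}|_{V}$, for $V=Z\setminus\{q_i\mid i\neq j\}$, to the trivial pair $\mathcal{O}_W^r\to\mathcal{O}_W^r$ on $W=Z\setminus\{q_j\}$ via \cref{lem:glueing_sheaf_morphisms} (using that $s$ is an isomorphism off $\Supp Q$), proves the resulting map on $k$-points is well defined and bijective by an inductive re-gluing, and only at the very end upgrades the set-theoretic bijection to a scheme isomorphism by invoking the universal families on the fine moduli spaces $M_j$. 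You instead pass immediately to the Quot-scheme side via \cref{lem:class_equiv_inclusion,lem:moduli_quot_scheme} and decompose the torsion quotient $\mathcal{Q}_T=\bigoplus_j\mathcal{Q}_{T,j}$ according to the connected components of its support. This buys functoriality for free: both $\Phi$ and $\Psi$ are defined on $T$-points, so they are morphisms of schemes and visibly mutually inverse, with no separate universal-family step. What it costs is the geometric picture the paper is after (the independent birational modification of the associated model over each $q_j$), which is invisible in your version. The construction itself is sound: extension by zero from disjoint opens gives coherent direct summands, summands of $T$-flat sheaves are $T$-flat, and your stalkwise surjectivity argument for the diagonal map is right. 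One compatibility you should state rather than assume: to land in the scheme-theoretic fibres you need that the Quot-to-Chow morphism is additive on direct sums supported at disjoint points, and that the addition map $\prod_j\Sym^{n_j}Z\to\Sym^{n}Z$ is a local isomorphism near $(n_1q_1,\dots,n_\ell q_\ell)$, so that constancy of the total divisor forces constancy of each factor even over non-reduced $T$. Both facts are standard, and the paper is no more detailed at the corresponding point, but they are where the scheme-theoretic content actually lives.

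There is one genuine omission. The theorem is stated for an arbitrary smooth curve $Z$, and $M_Q$, $M_j$ are defined intrinsically on $Z$; your definition $M_Q\coloneqq\pi^{-1}(Q)\subseteq M_Z(r,n_Q)$ presupposes $Z$ projective, since $M_Z(r,n)$ and the Quot-to-Chow morphism are only defined in that case. The paper's first step is exactly this reduction: it extends any stable pair on $Z$ with cokernel divisor $Q$ to the smooth compactification $\overline{Z}$ by gluing with the trivial pair on $\overline{Z}\setminus\Supp Q$ (\cref{lem:glueing_sheaf_morphisms}), thereby identifying $M_Q$ on $Z$ with the fibre over $Q$ on $\overline{Z}$. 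You need this step, or an equivalent one, before your argument can begin.
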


We will make some preparations before giving the proof.
We will apply the theorem to understand the fibres of
\[
	M_Z(r,n)\longrightarrow \Hilb_Z^n
\]
when \(Z\) is a smooth projective curve.
In fact this connection already appears in the proof of the theorem.

\begin{lemma}\label{lem:glueing_sheaf_morphisms}
	Assume \(Z\) is a scheme and \(V,W\subseteq Z\) are open subsets such that \(Z=V\cup W\).
	Also assume that
	\[
		\mathcal{F}_V\longrightarrow\mathcal{E}_V\quad\text{and}
		\quad\mathcal{F}_W\longrightarrow \mathcal{E}_W
	\]
	are morphisms of \(\mathcal{O}_{V}\)-modules and \(\mathcal{O}_{W}\)-modules respectively
	such that after restriction to \(V\cap W\) we have a commutative diagram
	\[
		\xymatrix@C=5em{
		\mathcal{F}_V|_{V\cap W}\ar[r]^-{\textup{isomorphism}} \ar[d] & \mathcal{F}_W|_{V\cap W} \ar[d] \\
		\mathcal{E}_V|_{V\cap W} \ar[r]_-{\textup{isomorphism}} & \mathcal{E}_W|_{V\cap W}.
		}
	\]
	Then these uniquely determine a morphism of \(\mathcal{O}_{X}\)-modules
	\[
		\mathcal{F}\longrightarrow \mathcal{E}
	\]
	whose restriction to \(V,W\) coincide with \(\mathcal{F}_V\to\mathcal{E}_V\)
	and \(\mathcal{F}_W\to\mathcal{E}_W\) respectively.
\end{lemma}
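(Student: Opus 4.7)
The plan is to first build the global sheaves $\mathcal{F}$ and $\mathcal{E}$ on $Z$ by the standard gluing construction, and then to produce the morphism $\mathcal{F}\to\mathcal{E}$ by exploiting the fact that $\SHom_{\mathcal{O}_{Z}}(\mathcal{F},\mathcal{E})$ is itself a sheaf on $Z$.

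First, I would apply the usual gluing lemma for sheaves (e.g.\ Hartshorne, Chapter~II, Exercise~1.22) to the pair $\mathcal{F}_V,\mathcal{F}_W$ using the prescribed isomorphism $\mathcal{F}_V|_{V\cap W}\xrightarrow{\sim}\mathcal{F}_W|_{V\cap W}$ as transition datum, obtaining a sheaf $\mathcal{F}$ on $Z$ together with $\mathcal{O}_Z$-linear identifications $\mathcal{F}|_V\simeq\mathcal{F}_V$ and $\mathcal{F}|_W\simeq\mathcal{F}_W$ which, on $V\cap W$, compose to the given horizontal isomorphism. The same construction applied to the bottom row produces $\mathcal{E}$ on $Z$ with analogous identifications $\mathcal{E}|_V\simeq\mathcal{E}_V$ and $\mathcal{E}|_W\simeq\mathcal{E}_W$.

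Next, I would transfer the two given morphisms via these identifications, yielding $\varphi_V\colon\mathcal{F}|_V\to\mathcal{E}|_V$ and $\varphi_W\colon\mathcal{F}|_W\to\mathcal{E}|_W$. The commutativity of the square in the hypothesis is precisely the assertion that, after restriction to $V\cap W$, $\varphi_V$ and $\varphi_W$ correspond to the same morphism under the horizontal transition isomorphisms; that is, they agree as elements of $\Hom_{\mathcal{O}_{V\cap W}}(\mathcal{F}|_{V\cap W},\mathcal{E}|_{V\cap W})$. Since $\SHom_{\mathcal{O}_{Z}}(\mathcal{F},\mathcal{E})$ is a sheaf, the local sections $\varphi_V$ and $\varphi_W$ glue to a unique global section, i.e.\ a morphism $\varphi\colon\mathcal{F}\to\mathcal{E}$ of $\mathcal{O}_Z$-modules whose restrictions recover $\mathcal{F}_V\to\mathcal{E}_V$ and $\mathcal{F}_W\to\mathcal{E}_W$. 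Uniqueness of $\varphi$ is again immediate from the sheaf property of $\SHom$.

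There is no serious obstacle here; the only care needed is bookkeeping of identifications, making sure that the commutative square in the hypothesis translates literally into equality $\varphi_V|_{V\cap W}=\varphi_W|_{V\cap W}$ once $\mathcal{F}$ and $\mathcal{E}$ have been constructed. This is forced because the gluing data chosen for $\mathcal{F}$ and $\mathcal{E}$ are exactly the horizontal arrows appearing in the square, so the commutativity of that square \emph{is} the cocycle-type compatibility required by the sheaf gluing of morphisms.
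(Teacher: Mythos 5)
Your proposal is correct and follows essentially the same route as the paper: glue $\mathcal{F}_V,\mathcal{F}_W$ and $\mathcal{E}_V,\mathcal{E}_W$ along the given transition isomorphisms, then glue the two morphisms using the fact that morphisms of $\mathcal{O}_Z$-modules form a sheaf (the sections of $\SHom_{\mathcal{O}_Z}(\mathcal{F},\mathcal{E})$), with the commutative square supplying exactly the required compatibility on $V\cap W$. The paper's proof is just a terser version of the same argument; your extra bookkeeping of the identifications is a welcome elaboration rather than a deviation.
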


\begin{proof}
	The sheaves \(\mathcal{F}_V\) and \(\mathcal{F}_W\) glue
	via the isomorphism \(\mathcal{F}_V|_{V\cap W}\to \mathcal{F}_W|_{V\cap W}\)
	to make an \(\mathcal{O}_{X}\)-module \(\mathcal{F}\).
	Similarly, we get an \(\mathcal{O}_{X}\)-module \(\mathcal{E}\)
	from \(\mathcal{E}_V\) and \(\mathcal{E}_W\).
	For similar reason, the morphisms
	also glue to give \(\mathcal{F}\to \mathcal{E}\) which is uniquely determined.
\end{proof}

\begin{proof}[Proof of \cref{thm:moduli_of_product}]
	First we reduce to the case when \(Z\) is projective.
	Let \(\overline{Z}\) be the compactification of \(Z\).
	Let \(V=Z\) and \(W=\overline{Z}\setminus\Supp Q\).
	Given any stable pair \(\mathcal{E},s\) on \(Z\) with cokernel divisor \(Q\),
	considering
	\[
		\mathcal{O}_{V}^r\xlongrightarrow{s}\mathcal{E}_V\coloneqq\mathcal{E}\quad\text{and}\quad
		\mathcal{O}_{W}^r\xlongrightarrow{\id} \mathcal{O}_{W}^r
	\]
	and the diagram
	\[
		\xymatrix{
		\mathcal{O}^r_{V\cap W} \ar[r]^{\text{id}} \ar[d]_{s|_{V\cap W}} & \mathcal{O}^r_{V\cap W} \ar[d]^{\id} \\
		\mathcal{E}_V|_{V\cap W} \ar[r]_{\alpha} & \mathcal{O}^r_{V\cap W}
		}
	\]
	where \(\alpha\) is determined by the other three arrows (which are all isomorphisms),
	and applying \cref{lem:glueing_sheaf_morphisms},
	we can extend \(\mathcal{E},s\) to \(\overline{Z}\) preserving the cokernel divisor.
	Note that \(\alpha\) is simply the inverse of \(s|_{V\cap W}\).
	Similar remarks apply to stable pairs with cokernel divisor \(n_{j}q_{j}\).
	Thus from now on we assume \(Z\) is projective.
	Then we can identify
	\[
		\begin{array}{l}
			M_Q=\text{fibre of } M_Z(r,n)\longrightarrow\Hilb_Z^n
			\text{ over } Q\qquad (n=\sum n_{j}) \\
			M_j=\text{fibre of } M_Z(r,n)\longrightarrow\Hilb_Z^{n_{j}}
			\text{ over } n_{j}q_{j}
		\end{array}
	\]
	so we get projective scheme structures on \(M_Q\) and \(M_j\).

	Next we will define a map on \(k\)-rational points
	\[
		M_Q \longrightarrow M_1\times \dots \times M_{\ell}.
	\]
	Pick \([\mathcal{E},s]\in M_Q\).
	Let \(V=Z\setminus \{q_{2}, \dots, q_{\ell}\}\) and \(W=Z\setminus \{q_1\}\).
	Then the morphisms
	\[
		\mathcal{O}_{V}^r\xlongrightarrow{s|_V} \mathcal{E}|_V \quad \text{and} \quad
		\mathcal{O}_{W}^r\xlongrightarrow{\id} \mathcal{O}_{W}^r
	\]
	glue together via
	\[
		\xymatrix{
		\mathcal{O}^r_{V\cap W} \ar[r]^{\text{id}} \ar[d]_{s|_{V\cap W}} & \mathcal{O}^r_{V\cap W} \ar[d]^{\id} \\
		\mathcal{E}|_{V\cap W} \ar[r]_{\beta} & \mathcal{O}^r_{V\cap W}
		}
	\]
	where \(\beta\) is determined by other arrows.
	This gives a stable pair \(\mathcal{E}^1,s^1\) with cokernel divisor \(n_1q_1\).
	A similar construction produces \(\mathcal{E}^j,s^j\) with cokernel divisor \(n_{j}q_{j}\).

	Assume \([\mathcal{E},s]=[\mathcal{G},t]\in M_Q\).
	Then there is a commutative diagram
	\[
		\xymatrix{
		\mathcal{O}_{Z}^r \ar[r]^s \ar@{=}[d] & \mathcal{E} \ar[d]^{\text{isomorphism}} \\
		\mathcal{O}_{Z}^r \ar[r]_t & \mathcal{G}
		}
	\]
	Let again \(V=Z\setminus \{q_{2}, \dots, q_{\ell}\}\) and \(W=Z\setminus \{q_1\}\),
	and consider
	\begin{equation}\label{eq:two_commutative_diagrams}
		\xymatrix{
			\mathcal{O}_{V}^r \ar[r]^{s|_V} \ar[d]_{\id} & \mathcal{E}|_V \ar[d] \\
			\mathcal{O}_{V}^r \ar[r]_{t|_V} & \mathcal{G}|_V
		}
		\hspace{5em}
		\xymatrix{
			\mathcal{O}_{W}^r \ar[r]^{\id} \ar[d]_{\id} & \mathcal{O}_W^r \ar[d]^{\id} \\
			\mathcal{O}_{W}^r \ar[r]_{\id} & \mathcal{O}_W^r.
		}
	\end{equation}
	Restricting to \(V\cap W\) we get a commutative diagram
	\[
		\xymatrix@R=1.5em@C=1.5em{
		\mathcal{O}_{V\cap W}^r \ar[rrr]^{\id} \ar[rd]^{s|_{V\cap W}} \ar[dd]_{\id} & & & \mathcal{O}_{V\cap W}^r \ar[rd]^{\id} \ar@{.>}[dd]^(.6){\id} \\
		& \mathcal{E}|_{V\cap W} \ar[rrr] \ar[dd] & & & \mathcal{O}_{V\cap W}^r \ar[dd]^{\id} \\
		\mathcal{O}_{V\cap W}^r \ar@{.>}[rrr]^(.6){\id} \ar[rd]_{t|_{V\cap W}} & & & \mathcal{O}_{V\cap W}^r \ar@{.>}[rd]^{\id} \\
		& \mathcal{G}|_{V\cap W} \ar[rrr] & & & \mathcal{O}_{V\cap W}^r
		}
	\]
	where \(\mathcal{E}|_{V\cap W}\to \mathcal{O}_{V\cap W}^r\) and
	\(\mathcal{G}|_{V\cap W}\to \mathcal{O}_{V\cap W}^r\) are inverses of \(s|_{V\cap W}\) and \(t|_{V\cap W}\).
	Here we are making use of the fact that \(V\cap W=Z\setminus\Supp Q\),
	so \(s|_{V\cap W}\) and \(t|_{V\cap W}\) are both isomorphisms.
	Therefore, the diagrams in \eqref{eq:two_commutative_diagrams} glue together giving a commutative diagram
	\[
		\xymatrix{
		\mathcal{O}_{Z}^r \ar[r]^{s^1} \ar@{=}[d] & \mathcal{E}^1 \ar[d]^{\text{isomorphism}} \\
		\mathcal{O}_{Z}^r \ar[r]_{t^1} & \mathcal{G}^1
		}
	\]
	showing \([\mathcal{E}^1,s^1]=[\mathcal{G}^1,t^1]\).
	The same holds for every \(j\).
	Thus we have a well-defined map
	\[
		M_Q \longrightarrow M_1\times \dots \times M_{\ell}.
	\]
	We claim that this map is bijective.
	Pick \([\mathcal{E}_j,s_j]\in M_j\).
	Inductively, we can assume that there exists a unique stable pair \(\widetilde{\mathcal{E}},\widetilde{s}\)
	on \(Z\) such that
	\[
		\coker(\widetilde{s}) = \sum_{j=2}^{\ell}n_{j}q_{j} \quad \text{and} \quad
		\widetilde{\mathcal{E}}^j,\widetilde{s}^j=\mathcal{E}_j,s_j \quad \forall 2\leq j\leq \ell.
	\]
	Using the notation \(V,W\) from above,
	the two morphisms
	\[
		\mathcal{O}_{V}^r\xlongrightarrow{s_1|_V}\mathcal{E}_1 \quad \text{and} \quad
		\mathcal{O}_{W}^r\xlongrightarrow{\widetilde{s}|_W}\widetilde{\mathcal{E}}|_W
	\]
	restricted to \(V\cap W\) give a diagram
	\[
		\xymatrix@C=5em{
		\mathcal{O}_{V\cap W}^r \ar[r]^{\id} \ar[d]_{s_1|_{V\cap W}} & \mathcal{O}_{V\cap W}^r \ar[d]^{\widetilde{s}|_{V\cap W}} \\
		\mathcal{E}_1|_{V\cap W} \ar[r]_-{\text{isomorphism}} & \widetilde{\mathcal{E}}|_{V\cap W}
		}
	\]
	where the lower isomorphism is determined by the other three isomorphisms.
	So the above two morphisms glue to give a morphism \(\mathcal{O}_{Z}^r\xlongrightarrow{s}\mathcal{E}\).
	From the construction we see that \(\mathcal{E}^j,s^j=\mathcal{E}_j,s_j\) for all \(j\).

	One can also show that the class \([\mathcal{E},s]\) is also uniquely determined
	by the classes \([\mathcal{E}_j,s_j]\).
	We leave this to the reader.

	It remains to show \(M_Q\to M_1\times\dots\times M_{\ell}\) is a morphism of schemes.
	Note that \(M_Q\) and \(M_j\) are (fine) moduli spaces with universal families \(\mathcal{M}_j\to M_j\)
	giving \(\prod\mathcal{M}_j\to \prod M_j\).
	The latter is a family for moduli functor of \(M_Q\),
	hence there is a moduli map \(\prod M_j\to M\).
	One may check that this coincides with inverse of \(M_Q\to \prod M_j\) constructed above.
\end{proof}

\begin{remark}\label{rem:reduced_cokernel_divisor_algebraic}
	Assume \(Z\) is a smooth curve,
	and \(Q=nq\geq 0\) a divisor supported at one point \(q\).
	We outline an algebraic method to calculate
	\[
		M_{Q}=\{[\mathcal{E},s] \mid \mathcal{E},s \text{ is a stable pair of rank } r \text{ with cokernel divisor } Q\}.
	\]
	We can compactify \(Z\) hence assume it is projective.
	Then by \cref{lem:moduli_quot_scheme} it is enough to parametrises quotients
	\[
		\mathcal{O}_{Z}^r\longrightarrow \mathcal{L}
	\]
	with \(\mathcal{L}\) of rank zero whose first Chern class is \(Q\)
	\cite[Example~15.2.16(b)]{fulton1998intersection}.
	Two such quotients are considered the same if their kernels coincide.

	Since \(\mathcal{L}\) is supported at \(q\),
	the quotient is determined by the induced quotient
	\[
		\mathcal{O}_{q}^r\longrightarrow \mathcal{L}_q
	\]
	where \(\mathcal{O}_{q}\) is the local ring of \(Z\) at \(q\);
	this is because \(\mathcal{L}_q=H^0(Z,\mathcal{L})\) and
	giving a surjection \(\mathcal{O}_{Z}^r\to \mathcal{L}\) (resp.\ \(\mathcal{O}_{q}^r\to \mathcal{L}_q\))
	is the same as giving \(r\) sections of \(\mathcal{L}\) generating \(\mathcal{L}\)
	(resp.\ \(r\) elements of \(\mathcal{L}_q\) generating \(\mathcal{L}_q\)).

	Now \(\mathcal{O}_{q}\) is a PID,
	so \(\mathcal{L}_q\) is an \(\mathcal{O}_{q}\)-module of the form
	\[
		\mathcal{O}_{q}/I_1\oplus\dots\oplus\mathcal{O}_{q}/I_d
	\]
	where \(d\leq r\) and
	\[
		\sum_1^d \length(\mathcal{O}_{q}/I_j)=n.
	\]
	If \(t\) is a local parametre at \(q\),
	then \(I_j=\langle t^{\ell_j}\rangle\) for some \(\ell_{j}\leq n\),
	hence
	\[
		\mathcal{O}_{q}/I_j\simeq k[t]/\langle t^{\ell_j}\rangle \quad \text{and} \quad \sum_1^d \ell_{j}=n.
	\]
	So \(M_Q\) parametrises quotients
	\[
		\bigoplus_1^r k[t] \longrightarrow \bigoplus_1^d k[t]/\langle t^{\ell_j}\rangle \quad \text{with} \quad \sum_1^d \ell_{j}=n.
	\]
	In turn this corresponds to quotients
	\begin{equation}\label{eq:quotients_over_k}
		k[t]^{\oplus r} \longrightarrow \bigoplus_1^r k[t]/\langle t^n\rangle \longrightarrow \bigoplus_1^d k[t]/\langle t^{\ell_j}\rangle
		\quad \text{with} \quad \sum_1^d \ell_{j}=n.
	\end{equation}
	In particular, this shows that \(M_Q\) depends only on \(n\)
	and thus to calculate \(M_Q\) we could assume \(Z=\mathbb{P}^1\).
\end{remark}

\begin{remark}\label{rem:mq_smooth_cod_one}
	This is a continuation of \cref{rem:reduced_cokernel_divisor_algebraic}.
	If \(n=1\),
	the quotients of \eqref{eq:quotients_over_k} are of the form
	\[
		k^{\oplus r} \longrightarrow k,
	\]
	which in turn implies \(M_q\simeq\mathbb{P}^{r-1}\).

	\medskip

	In the following we assume \(r,n\geq 2\).
	There are \(p_r(n)\geq 2\) possibilities for \(L=\oplus k[t]/\langle t^{\ell_j}\rangle\)
	satisfies \eqref{eq:quotients_over_k},
	where \(p_r(n)\) is the number of partitions of \(n\) into \(r\) non-zero parts.

	\medskip
	(1)
	First, we take \(L=k[t]/\langle t^n\rangle\).
	A quotient as in \eqref{eq:quotients_over_k} is determined by
	\[
		e_i = a_{i,0} + a_{i,1}t + \dots + a_{i,n-1}t^{n-1}, \quad 1\leq i\leq r
	\]
	such that \(a_{i,0}\neq 0\) for at least one \(i\).
	Note that \(e_i\) is invertible in \(L\) if and only if \(a_{i,0}\neq 0\).

	The quotients with \(e_i\) invertible are parametrised by \(\mathbb{A}^{n(r-1)}\) via
	\[
		\begin{pmatrix}
			a_{1,0}   & a_{1,1}   & \cdots & a_{1,n-1}   \\
			\vdots    & \vdots    & \ddots & \vdots      \\
			a_{i-1,0} & a_{i-1,1} & \cdots & a_{i-1,n-1} \\
			1         & 0         & \cdots & 0           \\
			a_{i+1,0} & a_{i+1,1} & \cdots & a_{i+1,n-1} \\
			\vdots    & \vdots    & \ddots & \vdots      \\
			a_{r,0}   & a_{r,1}   & \cdots & a_{r,n-1}
		\end{pmatrix}
	\]
	whose entries are the coefficients of \(e_i^{-1}e_j\) in \(L\) for \(1\leq j\leq r\).
	All of these glue together to form a smooth open subset \(S_1\) of \(M_Q\)
	of dimension \(n(r-1)\).

	\medskip
	(2)
	Next we take \(L=k[t]/\langle t^{n-1}\rangle\oplus k\).
	Then a quotient as in \eqref{eq:quotients_over_k} is determined
	\[
		e_i = a_{i,0} + a_{i,1}t + \dots + a_{i,n-2}t^{n-2} \text{ and } b_i, \quad 1\leq i\leq r
	\]
	such that
	\[
		\rk
		\begin{pmatrix}
			a_{1,0} & b_1    \\
			a_{2,0} & b_2    \\
			\vdots  & \vdots \\
			a_{r,0} & b_r
		\end{pmatrix}
		=2.
	\]
	The quotients with
	\[
		\begin{pmatrix}
			a_{i,0} & b_i \\
			a_{j,0} & b_j
		\end{pmatrix}
	\]
	invertible are parametrised by \(\mathbb{A}^{n(r-1)-2}\) via
	\[
		\begin{pmatrix}
			a_{1,0}   & a_{1,1}   & \cdots & a_{1,n-2}   & b_1     \\
			\vdots    & \vdots    & \ddots & \vdots      & \vdots  \\
			a_{i-1,0} & a_{i-1,1} & \cdots & a_{i-1,n-2} & b_{i-1} \\
			1         & 0         & \cdots & 0           & 0       \\
			a_{i+1,0} & a_{i+1,1} & \cdots & a_{i+1,n-2} & b_{i+1} \\
			\vdots    & \vdots    & \ddots & \vdots      & \vdots  \\
			a_{j-1,0} & a_{j-1,1} & \cdots & a_{j-1,n-2} & b_{j-1} \\
			0         & a_{j,1}   & \cdots & a_{j,n-2}   & 1       \\
			a_{j+1,0} & a_{j+1,1} & \cdots & a_{j+1,n-2} & b_{j+1} \\
			\vdots    & \vdots    & \ddots & \vdots      & \vdots  \\
			a_{r,0}   & a_{r,1}   & \cdots & a_{r,n-2}   & b_r
		\end{pmatrix}.
	\]
	All of these glue together to form  smooth locally closed subset \(S_2\) of \(M_Q\)
	of dimension \(n(r-1)-2\).

	We conclude that \(M_Q\) has a stratification with locally closed smooth subsets
	\(\{S_i\}_1^{p_r(n)}\) where \(\dim S_1=n(r-1)\) and \(\dim S_j\leq n(r-1)-2\)
	for \(1\leq i\leq p_r(n)\) and \(1<j\leq p_r(n)\).
	In particular, \(M_Q\) is of dimension \(n(r-1)\) and smooth in codimension one.
	Therefore, \(M_Q\) is normal by \cref{lem:fibre_normal}.
\end{remark}

\begin{proof}[Proof of \cref{thm:main_rank_2_on_curves}]
	(1) follows from \cref{lem:moduli_quot_scheme}.
	(2) is an immediate consequence of \cref{thm:moduli_of_product,rem:reduced_cokernel_divisor_algebraic}.

	For (3), as in \cref{rem:reduced_cokernel_divisor_algebraic},
	\(F_{j}\) depends only on \(n_{j}\) and hence we may take \(Z=\mathbb{P}^1\).
	If \(n_j=1\), then \(F_j\simeq \mathbb{P}^{r-1}\) and everything is clear.
	So we assume \(n_j\geq 2\) and then the normality of \(F_{j}\)
	follows from \cref{rem:mq_smooth_cod_one}.
	Note that every fibre of \(\pi\) has dimension equal \(n(r-1)\).
	So \(\pi\) is flat \cite[Theorem~18.16]{eisenbud1995commutative},
	and then \(K_{F_j}\) is Cartier
	since \(K_X\) is Cartier \cite[Theorem~23.4]{matsumura1987commutative}.
\end{proof}

\section{Stable pairs of rank two over curves}

In this section we continue our study from the previous section
but for rank two case with irreducible cokernel support.

\begin{theorem}\label{thm:stable_rank_2_over_curve_irreducible_cokernel}
	Assume \(Z\) is a smooth curve,
	\(Q=nq\) with \(n\geq 0\) and \(q\) a point.
	Then
	\[
		M_{Q}=\{[\mathcal{E},s]\mid\mathcal{E},s \text{ is a stable pair of rank } 2 \text{ with cokernel divisor } Q\}
	\]
	has a natural stratification by locally closed subsets
	\[
		M_Q=\bigcup_{\substack{m\geq 0 \textup{ such that}\\ m+2\ell=n \textup{ for some } \ell\geq 0}} G_m
	\]
	where
	\[
		G_m\subseteq M_{mq}=\{[\mathcal{G},t] \text{ stable of rank } 2 \text{ with cokernel divisor } mq\}
	\]
	is an open subset.
\end{theorem}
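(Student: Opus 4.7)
The plan is to stratify $M_Q$ by the invariant factor type of the cokernel at $q$. By \cref{lem:class_equiv_inclusion} and \cref{rem:reduced_cokernel_divisor_algebraic}, each $[\mathcal{E},s]\in M_Q$ corresponds to an inclusion $\mathcal{K}\hookrightarrow \mathcal{O}_Z^2$ with cokernel supported at $q$ of length $n$, and is entirely determined by the local inclusion $\mathcal{K}_q\hookrightarrow \mathcal{O}_q^2$. Since $\mathcal{O}_q\simeq k[[t]]$ is a PID, the quotient admits a unique invariant factor decomposition $k[t]/\langle t^{\ell_1}\rangle\oplus k[t]/\langle t^{\ell_2}\rangle$ with $\ell_1\geq \ell_2\geq 0$ and $\ell_1+\ell_2=n$. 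Set $\ell\coloneqq \ell_2$ and $m\coloneqq \ell_1-\ell_2$, so $m+2\ell=n$ and $m\geq 0$, and define $G_m$ to be the locus of classes whose cokernel has this invariant factor type. Every $[\mathcal{E},s]$ lies in exactly one such $G_m$, giving a set-theoretic partition indexed as in the statement.

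Next I would identify $G_m$ with the cyclic-cokernel locus in $M_{mq}$ via an \emph{untwist by $t^\ell$} operation. Given $\mathcal{K}\hookrightarrow \mathcal{O}_Z^2$ in $G_m$, define a subsheaf $\mathcal{K}''\subseteq \mathcal{O}_Z^2$ by
\[
\mathcal{K}''|_{Z\setminus\{q\}}=\mathcal{O}_Z^2|_{Z\setminus\{q\}}, \qquad \mathcal{K}''_q=\{v\in \mathcal{O}_q^2\mid t^\ell v\in \mathcal{K}_q\}.
\]
This assignment is basis-free, hence canonical; in a basis realising the invariant factor decomposition $\mathcal{K}_q=\langle t^{m+\ell}e_1,t^\ell e_2\rangle$ one computes $\mathcal{K}''_q=\langle t^m e_1,e_2\rangle$, whose cokernel is the cyclic module $k[t]/\langle t^m\rangle$. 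Dualising via \cref{lem:class_equiv_inclusion} produces a class in $M_{mq}$ with cyclic cokernel at $q$. Conversely, multiplication of $\mathcal{K}''_q$ by $t^\ell$ recovers $\mathcal{K}_q$ uniquely with invariant factors $(m+\ell,\ell)$. This exhibits a bijection between $G_m$ and the locus $G_m'\subseteq M_{mq}$ of pairs with cyclic cokernel. That $G_m'$ is open in $M_{mq}$ follows from the fact that its complement---classes whose cokernel has second invariant factor $\geq 1$---is cut out by the vanishing of an appropriate Fitting ideal of the universal cokernel.

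To upgrade this pointwise bijection to an identification of schemes, I would appeal to the Quot-scheme description of \cref{lem:moduli_quot_scheme}. For a family $\mathcal{K}_S\hookrightarrow \mathcal{O}_{Z\times S}^2$ with cokernel $\mathcal{Q}_S$, the sheaf $\mathcal{K}''_S$ is the kernel of the composition $\mathcal{O}_{Z\times S}^2\to \mathcal{Q}_S\xrightarrow{\cdot t^\ell}\mathcal{Q}_S$, a construction manifestly compatible with base change. This yields a morphism $G_m\to M_{mq}$ landing in $G_m'$, with inverse given by the twist construction, thereby realising $G_m$ as the open subscheme $G_m'$ of $M_{mq}$. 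Local-closedness of $G_m$ inside $M_Q$ follows from the semicontinuity of invariant factors in flat families: both ``$\ell_2\geq \ell$'' and, given fixed total length $n$, ``$\ell_2\leq \ell$'' are closed conditions expressible through Fitting ideals of the relative cokernel.

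The principal technical obstacle is the scheme-theoretic bookkeeping of the invariant factor decomposition in families. Pointwise the argument is entirely linear algebra over a PID, but verifying that the ``divide by $t^\ell$'' assignment is representable---and that the resulting identification of $G_m$ with $G_m'$ is an isomorphism of schemes rather than just of $k$-points---requires handling the universal family on the Quot-scheme and its Fitting ideal stratification with care. Once this functoriality is in place, the open embedding $G_m\hookrightarrow M_{mq}$ and the locally-closed stratification of $M_Q$ drop out simultaneously.
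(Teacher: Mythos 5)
Your proposal is correct in substance but follows a genuinely different route from the paper's proof. The paper argues geometrically on the ruled surface \(X=\mathbb{P}(\mathcal{E})\): it splits each \(D_i\) into its moving part \(M_{i,0}\) and the common fixed part \(R_0=\ell F\), blows up the successive intersection points of the moving parts until they separate, and extracts the relation \(n=m+2\ell\) with \(m=M_{1,0}\cdot M_{2,0}\) from the intersection computation \(D_1\cdot D_2=\deg\mathcal{E}\) (\cref{lem:intersection_of_two_sections_deg}); the twist \(\mathcal{E}\otimes\mathcal{O}_Z(-\ell q)\) then identifies \(G_m\) with the locus in \(M_{mq}\) where the two divisors have no common component. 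You instead work dually with \(\mathcal{K}=\mathcal{E}^{\vee}\hookrightarrow\mathcal{O}_Z^2\) and stratify by the invariant factors \((\ell_1,\ell_2)\) of the length-\(n\) cokernel over the local PID \(\mathcal{O}_q\), setting \(\ell=\ell_2\) and \(m=\ell_1-\ell_2\); your ``divide by \(t^{\ell}\)'' operation \(\mathcal{K}''_q=t^{-\ell}\mathcal{K}_q\) is precisely the dual of the paper's twist, and your cyclic-cokernel locus coincides with the paper's ``no common component'' locus, since \(t^{\ell_2}\) is the gcd of the entries of a local matrix for \(s\). This is essentially the algebraic picture the paper itself develops in \cref{rem:reduced_cokernel_divisor_algebraic}, \cref{rem:mq_smooth_cod_one} and the ``variety of invariant subspaces'' discussion following the theorem; your version is more explicit than the paper's proof about the scheme structure (Fitting ideals, base-change-compatible kernel construction on the universal family), while the paper's geometric version additionally produces the explicit blowup process relating the associated model to the stable minimal model, which is reused heavily in the subsequent remarks and in the degree \(2\) and \(3\) computations.

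One small correction: you assert that, for fixed total length \(n\), both ``\(\ell_2\geq\ell\)'' and ``\(\ell_2\leq\ell\)'' are closed conditions. Only the first is closed (it is the condition \(\mathrm{Fitt}_1(\mathcal{Q})\subseteq\langle t^{\ell}\rangle\) on the relative cokernel); the second is open, being the complement of the closed condition \(\ell_2\geq\ell+1\). If both were closed, every \(G_m\) would be closed in \(M_Q\), which is false: for \(n\geq 1\) the top stratum \(G_n\) is open and dense by \cref{rem:mq_smooth_cod_one}, hence not closed. Your conclusion that \(G_m\) is locally closed is unaffected, since it is the intersection of a closed set with an open set.
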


\begin{proof}
	As in the proof of \cref{thm:moduli_of_product}
	we can compactify \(Z\) hence assume it is projective.
	If \(Q=0\), then \(\mathcal{O}_{Z}^2\xlongrightarrow{s}\mathcal{E}\) is an isomorphism,
	so there exists only one equivalent class \([\mathcal{E},s]\).
	In this case \(M_Q\) is one point corresponding to \(m=0=n\).
	So we assume \(Q\neq 0\).

	Let \(X, D_1,D_2, A \to Z\) be the model associated to \(\mathcal{E},s\).
	Since \(\mathcal{E},s\) is stable,
	\(D_1,D_2,A\) have no common horizontal component over \(Z\).
	But they may have common components mapping to \(q\).

	Let \(X_0=X\), \(D_{i,0}=D_{i}\).
	Also let \(R_0\) be the largest divisor such that \(R_0\leq D_{i,0}\) for \(i=1,2\).
	Let \(M_{i,0}=D_{i}-R_0\).
	If \(s_{i}\) is the section corresponding to \(D_{i,0}\),
	then \(R_0\) is the fixed part of the linear system generated by \(s_1,s_2\).

	Now \(M_{1,0}\) and \(M_{2,0}\) have no common component and \(M_{i,0}=D_{i}\) over \(Z\setminus \{q\}\).
	Moreover, since \(D_{1,0}\sim D_{2,0}\), \(M_{1,0}\sim M_{2,0}\).
	Assume \(M_{1,0}\cap M_{2,0}=\emptyset\).
	In this case, \(R_0=\frac{n}{2}F\) where \(F\) is the fibre of \(X\to Z\) over \(q\).
	Indeed, this follows from \cref{lem:intersection_of_two_sections_deg} and
	\begin{align*}
		n & = D_{1,0}\cdot D_{2,0} = (M_{1,0}+R_0)\cdot (M_{2,0}+R_0) = M_{1,0}\cdot R_0 + M_{2,0}\cdot R_0 \\
		  & = 2 M_{1,0}\cdot R_0
	\end{align*}
	and \(M_{1,0}\cdot F=1\).
	Moreover,
	since \(M_{1,0}\sim M_{2,0}\) and \(M_{1,0}\cap M_{2,0}=\emptyset\),
	the linear system \(\abs{M_{1,0}}=\abs{M_{2,0}}\) gives an isomorphism \(X\simeq Z\times\mathbb{P}^1\)
	and \(\mathcal{E}\otimes\mathcal{O}_{Z}(-\frac{n}{2})\simeq \mathcal{O}_{Z}^2\).
	Thus the case \(M_{1,0}\cap M_{2,0}=\emptyset\) corresponds to \(m=0\), \(2\ell=n\) and
	there is only one class \([\mathcal{E},s]\) with this property,
	and this gives \(G_0=\pt\) when \(n\) is even.
	This case cannot happen when \(n\) is odd.

	Assume that \(M_{1,0}\cap M_{2,0}\neq \emptyset\).
	Note that at least one of \(M_{1,0}\) and \(M_{2,0}\) is purely horizontal,
	i.e., have no vertical component.
	So \(M_{1,0}\cap F\) or \(M_{2,0}\cap F\) is one point (with multiplicity one).
	Moreover, \(M_{1,0},M_{2,0}\) cannot intersect outside \(F\).
	This implies \(M_{1,0}\cap M_{2,0}\) is one point (but maybe with multiplicity \(>1\)).
	And if \(M_{i,0}\) is purely horizontal,
	then \(M_{i,0}\cap F = M_{1,0}\cap M_{2,0}\) (as sets).

	Let \(X_1\xlongrightarrow{\varphi_1}X_0\) be the blowup of the intersection point \(M_{1,0}\cap M_{2,0}\),
	say \(r_0\),
	and \(E_1\) the exceptional divisor.
	Let
	\[
		M_{i,1}=\varphi_1^{*}M_{i,0}-R_1
	\]
	where \(R_1\) is the largest divisor with \(R_1\leq \varphi_1^{*}M_{i,0}\), \(i=1,2\).
	By the previous paragraph,
	\(F\not\subseteq M_{i,0}\) for some \(i\) and \(M_{i,0}\cdot F=1\),
	so \(M_{i,0}\) is smooth at \(r_0\) for this \(i\),
	so \(R_1=E_1\) and
	\[
		M_{i,1}\cdot E_1=1 \text{ for both } i=1,2.
	\]
	Also \(M_{i,1}\) is purely horizontal for some \(i\),
	intersecting each fibre of \(X_1\to Z\) at one point transversally.
	So \(M_{1,1}\cap M_{2,1}\) is empty or consists of one point only (maybe with multiplicity \(>1\)).

	If \(M_{1,1}\cap M_{2,1}=\emptyset\),
	we stop.
	If not, we blowup \(M_{1,1}\cap M_{2,1}\) and repeat the above.
	This gives a sequence
	\begin{equation}\label{eq:xm_to_x0}
		\xymatrix{
		X_m \ar[r]^-{\varphi_m} & X_{m-1} \ar[r] & \cdots \ar[r] & X_1 \ar[r]^-{\varphi_1} & X_0=X
		}
	\end{equation}
	such that \(X_{j+1}\to X_{j}\) blows up a point on \(E_{j}=\Exc(\varphi_{j})\)
	not belonging to any other component of the fibre of \(X_{j}\to Z\) over \(q\),
	and such that \(M_{1,m}\cap M_{2,m}=\emptyset\).
	So the fibre of \(X_m\to Z\) over \(q\) is a reduced chain of \(\mathbb{P}^1\):
	\begin{center}
		\begin{tikzpicture}[x=0.75pt,y=0.75pt,yscale=-1,xscale=1]
			\draw (0,75) -- (50,37);
			\draw (10,75) node [anchor=north west][inner sep=0.75pt] [xscale=0.8,yscale=0.8] {\(F\)};
			\draw (19,47) -- (108,47);
			\draw (52,50) node [anchor=north west][inner sep=0.75pt] [xscale=0.8,yscale=0.8] {\(E_{1}\)};
			\draw (78,56) -- (128,17.75);
			\draw (132,2) node [anchor=north west][inner sep=0.75pt] [xscale=0.8,yscale=0.8] {\(E_{2}\)};
			\draw (100,18) -- (159,55);
			\draw [dash pattern={on 4.5pt off 4.5pt}]  (178,57) -- (230,57);
			\draw (241,60) -- (300,18);
			\draw (240,68) node [anchor=north west][inner sep=0.75pt] [xscale=0.8,yscale=0.8] {\(E_{m-1}\)};
			\draw (262,18) -- (325,57);
			\draw (323,68) node [anchor=north west][inner sep=0.75pt] [xscale=0.8,yscale=0.8] {\(E_{m}\)};
		\end{tikzpicture}
	\end{center}
	with self-intersections \(-1,-2,\dots,-2,-1\), respectively.
	Also \(E_m\not\subseteq M_{i,m}\) for \(i=1,2\),
	and \(M_{i,m}\) is purely horizontal for some \(i\),
	hence this \(M_{i,m}\) intersects \(E_m\) but no other \(E_{i}\) or \(F\).

	Now we can run an MMP on \(X_m\) over \(Z\).
	First we contract \(F\), then \(E_1,\dots\), then \(E_{m-1}\).
	We arrive at a surface \(X'\) such that
	\(E_m\) is the fibre of \(X'\to Z\) over \(q\).
	Since one of \(M_{i,m}\) does not intersect any of the curved contracted by the MMP,
	\(M_{1,m}\) and \(M_{2,m}\) remains disjoint in the process.
	Therefore,
	\[
		X'\simeq Z\times\mathbb{P}^1.
	\]
	Assume \(R_0=\ell F\).
	Then, by \cref{lem:intersection_of_two_sections_deg},
	\begin{align*}
		n & = D_{1,0}\cdot D_{2,0} = (M_{1,0}+R_0)\cdot (M_{2,0}+R_0)    \\
		  & = M_{1,0}\cdot M_{2,0} + M_{1,0}\cdot R_0 + M_{2,0}\cdot R_0 \\
		  & = M_{1,0}\cdot M_{2,0} + 2\ell
	\end{align*}
	where we use the facts
	\[
		M_{1,0}\sim M_{2,0} \quad \text{and} \quad M_{i,0}\cdot F=1.
	\]
	Also we can see from the construction of sequence \eqref{eq:xm_to_x0}
	that \(m=M_{1,0}\cdot M_{2,0}\).
	Thus \(n=m+2\ell\).
	If \(s_1,s_2\) are the sections determined by \(s\),
	then we get sections \(t_1,t_2\) of \(\mathcal{E}\otimes\mathcal{O}_{Z}(-\ell q)\) whose
	zero divisors on \(X\) are \(M_{1,0}\) and \(M_{2,0}\).
	In fact,
	\[
		\mathcal{O}_{Z}^2 \xlongrightarrow{t=(t_1,t_2)} \mathcal{E}\otimes \mathcal{O}_{Z}(-\ell q)
	\]
	gives a stable pair \(\mathcal{E}\otimes \mathcal{O}_{Z}(-\ell q),t\) with cokernel divisor \(mq\).

	Conversely, from any stable pair \(\mathcal{G},t\) with cokernel divisor \(mq\)
	we get a stable pair \(\mathcal{E},s\) with cokernel divisor \(Q=nq\) as above.
	This identifies
	\[
		G_m = \{[\mathcal{E},s]\in M_Q \text{ with } M_{1,0}\cdot M_{2,0}=m\}
	\]
	with the open subset
	\[
		\{[\mathcal{G},t]\in M_{mq} \text{ with } M_1 \text{ and } M_2 \text{ have no common component}\}
	\]
	where \(M_1,M_2\) are the divisors determined by \(t\).
	Note that the case \(m=n\), \(\ell=0\) gives an open subset \(G_n\subseteq M_Q\).
\end{proof}

\begin{remark}
	We use notation in the proof above.
	Let \(D_{i}'\) be the birational transform of \(M_{i,0}\),
	and \(A'\) birational transform of \(A_0=A-R_0\).
	Also let \(F'\) be the fibre of \(X'\to Z\) over \(q\).
	We will see below that
	\[
		(X', B'=D_1'+D_2'+F'), A' \longrightarrow Z
	\]
	is nothing but the stable minimal model associated to \(\mathcal{E},s\).

	It is clear from the construction above that we can recover
	\[
		X, M_{1,0},M_{2,0}, A_0 \longrightarrow Z
	\]
	from
	\[
		X', D_1',D_2', A' \longrightarrow Z
	\]
	by reversing \(X_m\to X'\) and then reversing \(X_m\to X\).

	In fact, \(G_m\) is parametrising all such precesses.
	This amounts to choosing and blowing up a point on \(F'\),
	then blowup a point on the exceptional curve not belonging to the birational transform of \(F'\),
	and so on.

	We argue that \(M_{1,0},M_{2,0},A_0\) are the pullbacks of \(D_1',D_2',A'\)
	under the birational map
	\[
		\xymatrix{
			X \ar[rd] \ar@{-->}[rr] & & X' \ar[ld] \\
			& Z.
		}
	\]
	One way to see this is to note that \(X\dashrightarrow X'\simeq Z\times\mathbb{P}^1\) is the map
	defined by the sections \(t_1,t_2\) of \(\mathcal{E}\otimes \mathcal{O}_{Z}(-\ell q)\)
	corresponding to \(M_{1,0},M_{2,0}\).

	Here is another argument.
	If \(M_{i,0}\) is purely horizontal, then
	\[
		M_{i,m}=\psi^{*}\psi_{*}M_{i,m}=\psi^{*}D_{i}'
	\]
	where \(\psi\) denotes \(X_m\to X'\),
	hence the same holds for both \(i=1,2\) noting that \(\psi_{*}M_{i,m}=D_{i}'\).
	This shows \(M_{i,0}\) is the pullback of \(D_{i}'\).

	On the other hand,
	let \(A_m\geq 0\) be the divisor on \(X_m\) such that \(A_m\sim M_{i,m}\) for \(i=1,2\) and
	such that pushdown of \(A_m\) to \(X\) is \(A_0\).
	Then \(E_m\not\subseteq A_m\) and \(A'\) is the pushdown of \(A_m\).
	Thus \(D_1'\sim D_2'\sim A'\),
	so if \(\overline{A}\) is the pullback of \(A'\) to \(X\),
	then
	\[
		\overline{A}\sim M_{1,0}\sim M_{2,0}\sim A_0 \quad \text{and} \quad
		\overline{A}=A_0 \text{ over } Z\setminus \{q\},
	\]
	so \(\overline{A}=A_0\).
\end{remark}

\begin{remark}
	We explain how the construction in the proof of
	\cref{thm:stable_rank_2_over_curve_irreducible_cokernel}
	relates to stable minimal models.
	We use notation introduced in the proof.

	By construction, \((X_m,B_m)\) is log smooth where
	\[
		B_m=\Supp(D_1^{\sim}+D_2^{\sim}+\Exc(\varphi)+\varphi^{-1}f^{-1}Q)
	\]
	and where \(\sim\) denotes birational transform and \(\varphi\) denotes \(X_m\to X\).
	Also recall \(A_m\sim M_{i,m}\).
	Let \(B_j,A_j\) be the pushdowns of \(B_m,A_m\) to \(X_{j}\) via \(X_{m}\to X_{j}\).
	We claim that
	\[
		(X_m,B_m+uA_m^h)
	\]
	is lc but
	\[
		(X_j,B_j+uA_j^h) \quad 0\leq j<m
	\]
	is not lc for any \(0<u\ll 1\) where \(A_{j}^h\) denotes the horizontal part of \(A_{j}\).
	The pushdown of \(B_m\) to \(X'\) is \(B'=D_1'+D_2'+F'\),
	and the pushdown of \(A_{m}^h\) is \(A'\).
	Moreover, \(D_1',D_2',A'\) are disjoint,
	and \((X',B'+uA')\) is lc.
	Since \(X_m\to X'\) is a sequence of blowups, \((X_m,B_m+uA_m^h)\) is lc.

	Now assume both \(M_{1,m}\) and \(M_{2,m}\) are purely horizontal.
	Then \(M_{1,j}\) and \(M_{2,j}\) intersects at some point on the fibre of \(X_{j}\to Z\) over \(q\),
	so \((X_{j},B_{j})\) is not lc.
	Then we can assume one of \(M_{1,m}\) and \(M_{2,m}\) is purely horizontal but the other is not.
	The same holds for \(M_{1,j}\) and \(M_{2,j}\).
	This implies \(A_0\) is also purely horizontal hence \(A_{j}\) is purely horizontal, so \(A_{j}^h=A_{j}\).

	Then again \(M_{1,j}\) and \(M_{2,j}\) intersect at some point and \(A_{j}\) also passes through this point.
	Thus
	\[
		(X_j,B_j+uA_j^h)
	\]
	is not lc.
	This proves the claim.

	Moreover, \(A'\) does not contain any non-klt centre of \((X',B')\),
	so
	\[
		(X',B'), A' \longrightarrow Z
	\]
	is a stable minimal model.

	Since \(B_m\) equals \(B'^{\sim}\) plus the exceptional divisor of \(X_m\to X'\),
	running MMPs as in \cref{setup:construction} ends with \((X',B')\),
	so the above is the stable minimal model of \(\mathcal{E},s\).

	In summary:
	each \(X_{j+1}\to X_{j}\) is the blowup of a point where
	\[
		(X_{j},B_{j}+uA_{j}^h)
	\]
	fails to be lc.
	When we arrive at \(X_m\),
	\[
		(X_m,B_m+uA_m^h)
	\]
	is lc,
	and using MMP we modify it to get a stable minimal model.
	So the whole process is dictated by transforming \((X_0,B_0+uA_0^h)\) into a stable minimal model.
\end{remark}

\begin{setup}
	\textbf{The variety of invariant subspaces.}
	Here is another point of view from embeddings.

	\begin{lemma}
		Let \(Z\) be a smooth projective curve,
		and let \(\alpha\colon \mathcal{K}\hookrightarrow \mathcal{O}_Z^2\) be a subsheaf of co-length \(n\).
		Suppose that \(\wedge^2\alpha\) given by a section \(s\in H^0(Z,\mathcal{O}_Z(n))\).
		Then there is a diagram
		\[
			\xymatrix@C=3em{
			\mathcal{O}_Z(-n)^{\oplus 2} \ar@/_1.5pc/[rr]_{s\oplus s} \ar@{^{(}->}[r] & \mathcal{K} \ar@{^{(}->}[r]^-{\alpha} & \mathcal{O}_Z^2. \\
			}
		\]
	\end{lemma}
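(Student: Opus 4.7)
Since $\mathcal{K}$ is a torsion-free coherent sheaf on the smooth curve $Z$ with torsion cokernel of length $n$, it is locally free of rank $2$; in particular $\det\mathcal{K}$ is a line bundle of degree $-n$, which is the line bundle denoted $\mathcal{O}_Z(-n)$ in the statement. The hypothesis on $\wedge^2\alpha$ reads: the induced map $\det\mathcal{K}\to\det\mathcal{O}_Z^{\oplus 2}=\mathcal{O}_Z$ is multiplication by $s$. The plan is to show that the composition
\[
    \mathcal{O}_Z(-n)^{\oplus 2}\xrightarrow{\;s\oplus s\;}\mathcal{O}_Z^{\oplus 2}\twoheadrightarrow \mathcal{Q}\coloneqq\coker(\alpha)
\]
is zero; by the universal property of the kernel, $s\oplus s$ will then factor uniquely through $\mathcal{K}\hookrightarrow\mathcal{O}_Z^{\oplus 2}$, producing the desired map $\beta\colon\mathcal{O}_Z(-n)^{\oplus 2}\to\mathcal{K}$.

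The heart of the argument is the stalkwise vanishing of $s\cdot\mathcal{Q}$. Fix $p\in\Supp\mathcal{Q}$, let $t$ be a uniformizer at $p$, and choose local trivializations of $\mathcal{K}$ and of $\mathcal{O}_Z(n)$ near $p$ so that $\alpha_p$ is represented by a $2\times 2$ matrix $A$ over the DVR $\mathcal{O}_{Z,p}$ with $s_p$ agreeing up to a unit with $\det A$. Smith normal form over a DVR provides invertible matrices $U,V$ with $UAV=\mathrm{diag}(t^{a_1},t^{a_2})$, whence
\[
    \mathcal{Q}_p\simeq \mathcal{O}_{Z,p}/(t^{a_1})\oplus \mathcal{O}_{Z,p}/(t^{a_2}),\qquad a_1+a_2=\ord_p(\det A)=\ord_p(s).
\]
Because $\max(a_1,a_2)\leq a_1+a_2=\ord_p(s)$, the element $s_p$ is divisible by $t^{\max(a_1,a_2)}$ and therefore annihilates each cyclic summand, hence annihilates $\mathcal{Q}_p$. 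Since this holds at every stalk, the tensored map $s\otimes\mathrm{id}_{\mathcal{Q}}\colon\mathcal{O}_Z(-n)\otimes\mathcal{Q}\to\mathcal{Q}$ is zero, which is exactly the vanishing needed for the factorization to exist.

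Once $\beta$ is produced, injectivity is immediate: at the generic point of $Z$ the section $s$ is nonzero (its divisor of zeros has finite degree $n$), so $s\oplus s$ is generically injective, and injectivity of $\alpha$ forces $\beta$ to be generically injective as well; torsion-freeness of $\mathcal{O}_Z(-n)^{\oplus 2}$ upgrades this to injectivity as a sheaf morphism. The only nontrivial input in this proof is the Smith-normal-form description of $\mathcal{Q}_p$ that makes the ``$s$ annihilates $\mathcal{Q}$'' step visible; everything else is a formal diagram chase.
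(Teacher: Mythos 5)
Your proof is correct, and it reaches the conclusion by a different route than the paper. The paper's proof is a direct construction: it dualises $\alpha$ to get $\alpha^{\vee}\colon\mathcal{O}_Z^2\to\mathcal{K}^{\vee}$, uses the rank-two identification $\mathcal{K}^{\vee}\simeq\mathcal{K}\otimes(\det\mathcal{K})^{-1}=\mathcal{K}\otimes\mathcal{O}_Z(n)$ to twist this into a map $\mathcal{O}_Z(-n)^{\oplus 2}\to\mathcal{K}$ (locally the adjugate matrix of $\alpha$), and then identifies the composite with $s\oplus s$ via Cramer's rule $A\cdot\mathrm{adj}(A)=\det(A)\,I$. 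You instead never construct the map explicitly: you prove that $s$ annihilates $\mathcal{Q}=\coker(\alpha)$ stalk by stalk via Smith normal form over the DVR $\mathcal{O}_{Z,p}$ (using $a_1+a_2=\ord_p(s)$, so $t^{\max(a_1,a_2)}\mid s_p$), conclude that $s\oplus s$ composed with $\mathcal{O}_Z^{\oplus 2}\twoheadrightarrow\mathcal{Q}$ vanishes, and invoke the universal property of the kernel, finishing with a generic-point plus torsion-freeness argument for injectivity of the induced map. The two arguments hinge on the same underlying fact --- $\det A$ kills $\coker A$ --- but your version trades the explicit adjugate (which immediately identifies the composite and requires no separate injectivity check) for a purely formal existence argument that avoids setting up the duality $\mathcal{K}^{\vee}\simeq\mathcal{K}\otimes\mathcal{O}_Z(n)$ altogether; the paper's version has the advantage of producing the map canonically and of meshing with the dualisation used in Lemma 3.12, which is how the surrounding section thinks of these objects. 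Both are complete proofs of the stated lemma.
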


	\begin{proof}
		Let \(\mathcal{E}=\mathcal{K}^{\vee}\).
		Note that \(\mathcal{E}\simeq \mathcal{K}\otimes\mathcal{O}_Z(n)\).
		The adjoint \(\alpha^{\vee}\colon \mathcal{O}_Z^2\longrightarrow \mathcal{E}\),
		which is defined via the adjunct matrix of \(\alpha\),
		can be twisted by \(\mathcal{O}_Z(-n)\).
		The fact that composition is \(s\oplus s\) is essentially Cramer's rule.
	\end{proof}

	Therefore \(\mathcal{K}\) is uniquely determined by a subsheaf of
	the Artinian sheaf
	\[
		\mathcal{A}_s\coloneqq(\mathcal{O}_Z/s\mathcal{O}_Z(-n))^{\oplus 2}
	\]
	of length \(n\), and
	\[
		\mathcal{O}_Z^{2}/\alpha\mathcal{K}\hooklongrightarrow (\mathcal{O}_Z/s\mathcal{O}_Z(-n))^{\oplus 2}=\mathcal{A}_s
	\]
	while \(\length(\mathcal{A}_s)=2n\).
	Suppose that \(s\) vanishes at points \(q_{1}, \dots, q_{r}\) with multiplicities \(n_{1}, \dots, n_{\ell}\),
	then \(s=s_1\cdots s_{\ell}\) where \(s_j\) vanishes exactly at \(q_j\) with multiplicity \(n_j\).

	Denote by \(\Quot(s)\) the space of subsheaves \(\mathcal{K}\) that correspond to a fixed \(s\in H^0(Z,\mathcal{O}_Z(n))\).
	Then we have the factorisation property
	\begin{align*}
		\mathcal{A}_s & = \mathcal{A}_{s_1} \oplus \dots \oplus \mathcal{A}_{s_{\ell}}, \\
		\Quot(s)      & = \Quot(s_1) \times \dots \times \Quot(s_{\ell}).
	\end{align*}
	So it suffices to assume \(s\) vanishes at a single point \(q\) with multiplicity \(n\).
	In other words \(s=t^n\) where \(t\in H^0(\mathcal{O}(1))\) vanishes at \(q\).
	Then \(\Quot(t^n)\) is the variety of \(n\)-dimensional subspaces
	\(V\subseteq (k[t]/\langle t^n\rangle)^{\oplus 2}\) satisfying \(t\cdot V\subseteq V\).

	Write
	\[
		(k[t]/\langle t^n\rangle)^{\oplus 2} = W \oplus tW \oplus \dots \oplus t^{n-1}W
	\]
	with \(\dim W=2\).
	Then \(V\) is a module over \(k[t]\) and by classification of modules over PID,
	\[
		V\simeq k[t]/\langle t^m\rangle \oplus k[t]/\langle t^{n-m}\rangle, \quad m=0,1,\dots, \rdown{\frac{n}{2}}.
	\]
	There are at most \(2\) terms since the null space \(\operatorname{Null}(V\xlongrightarrow{t\cdot} V)\hookrightarrow t^{n-1}W\)
	and \(\dim W=2\).

	\begin{lemma}
		For fixed \(m=0,1,\dots,\rdown{\frac{n}{2}}\eqqcolon b\),
		\[
			V=k[t]\cdot v + (t^{n-m}W\oplus \dots \oplus t^{n-1}W)
		\]
		and \(v\in t^mW\oplus t^{m+1}W\oplus \dots\oplus t^{n-1}W\) has a non-zero projection \(v\pmod{t^{m+1}}\in t^mW\).
	\end{lemma}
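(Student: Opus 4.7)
The plan is to identify $V$ as the sum of a cyclic $k[t]$-submodule $k[t]\cdot v$ generated by an element $v$ of $t$-order exactly $m$, together with the ambient $t^m$-torsion submodule of $(k[t]/\langle t^n\rangle)^{\oplus 2}$, which is precisely the graded piece $t^{n-m}W\oplus\cdots\oplus t^{n-1}W$ appearing in the statement.

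First I will establish the inclusion $t^{n-m}W\oplus\cdots\oplus t^{n-1}W\subseteq V$. The hypothesis $V\simeq k[t]/\langle t^m\rangle\oplus k[t]/\langle t^{n-m}\rangle$ forces the $t^m$-torsion subspace $V[t^m]$ to have $k$-dimension $m+m=2m$: the first summand is killed by $t^m$ in its entirety, while the $t^m$-torsion in the second summand is $t^{n-2m}\cdot k[t]/\langle t^{n-m}\rangle$, of dimension $m$. On the other hand, the ambient $t^m$-torsion submodule of $(k[t]/\langle t^n\rangle)^{\oplus 2}$ coincides with $t^{n-m}W\oplus\cdots\oplus t^{n-1}W$ and also has $k$-dimension $2m$; since it contains $V[t^m]$, the two must be equal. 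In particular $t^{n-m}W\oplus\cdots\oplus t^{n-1}W\subseteq V$.

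Next I will produce the generator $v$. Every element of $V$ is killed by $t^{n-m}$, so $V\subseteq t^mW\oplus\cdots\oplus t^{n-1}W$; on the other hand $V\not\subseteq t^{m+1}W\oplus\cdots\oplus t^{n-1}W$, because $V$ contains an element with exact annihilator $\langle t^{n-m}\rangle$ (namely any generator of the $k[t]/\langle t^{n-m}\rangle$-summand). I therefore pick $v\in V$ whose image in the graded piece $t^mW$ is nonzero, i.e., $v$ has $t$-order exactly $m$ in the ambient module. Writing $v=t^mu$ with $u\notin t\cdot(k[t]/\langle t^n\rangle)^{\oplus 2}$, one has $t^{n-m-1}v=t^{n-1}u\neq 0$, so the $k[t]$-annihilator of $v$ equals $\langle t^{n-m}\rangle$ and $\dim_k k[t]\cdot v=n-m$.

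Finally, a dimension count closes the argument. The intersection $k[t]\cdot v\cap(t^{n-m}W\oplus\cdots\oplus t^{n-1}W)$ is the $t^m$-torsion of $k[t]\cdot v$, namely $t^{n-2m}\cdot k[t]\cdot v$, of $k$-dimension $m$. Hence
\[
\dim_k\bigl(k[t]\cdot v+(t^{n-m}W\oplus\cdots\oplus t^{n-1}W)\bigr)=(n-m)+2m-m=n=\dim_k V,
\]
and since both summands on the left already lie in $V$, this dimension equality forces the desired identity. The only subtlety lies in handling the boundary values $m=0$ (where the torsion piece is trivial and $V$ is cyclic, generated by any element of $V$ avoiding $t\cdot(k[t]/\langle t^n\rangle)^{\oplus 2}$) and $m=\lfloor n/2\rfloor$ with $n$ even (where both summands of $V$ have the same length); both cases are absorbed uniformly by the same dimension argument.
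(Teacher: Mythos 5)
Your proof is correct. The paper states this lemma without proof, so there is no argument of record to compare against; your write-up supplies the missing verification. The three steps are each sound: the identification of \(t^{n-m}W\oplus\dots\oplus t^{n-1}W\) with the ambient \(t^m\)-torsion \(M[t^m]\) of \(M=(k[t]/\langle t^n\rangle)^{\oplus 2}\) and the comparison of dimensions (\(2m\) on both sides, using \(V\simeq k[t]/\langle t^m\rangle\oplus k[t]/\langle t^{n-m}\rangle\) and \(m\le n-m\)) correctly gives the inclusion \(M[t^m]\subseteq V\); the existence of \(v\in V\) with nonzero component in \(t^mW\) follows as you say from \(V\) containing an element of exact order \(t^{n-m}\), and the computation \(t^{n-m-1}v=t^{n-1}w\neq 0\) for the leading coefficient \(w\in W\) pins down \(\Ann(v)=\langle t^{n-m}\rangle\), hence \(\dim_k k[t]\cdot v=n-m\); and the inclusion--exclusion count \((n-m)+2m-m=n=\dim_k V\) closes the argument, with the boundary cases \(m=0\) and \(n=2m\) absorbed exactly as you note. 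One stylistic remark: the paper's surrounding discussion (the null space of \(t\cdot\colon V\to V\) embedding in \(t^{n-1}W\)) suggests the authors had in mind taking \(v\) to be a generator of the \(k[t]/\langle t^{n-m}\rangle\) summand directly; your version, which takes \emph{any} \(v\) with nonzero \(t^mW\)-component, is marginally stronger and is the formulation actually used later when the paper parametrises the strata \(\Quot(t^n,m)\) by the choice of \(v_m\in\mathbb{P}(t^mW)\).
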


	Note that we only care about the components \(v_m+v_{m+1}+\dots+v_{n-m-1}\) of \(v\)
	in \(t^mW\oplus t^{m+1}W\oplus \dots\oplus t^{n-m-1}W\).
	Since we only consider \(V\) as a subspace of \((k[t]/\langle t^n\rangle)^{\oplus 2}\),
	the component \(v_m\) is well-defined only as a point in \(\mathbb{P}(t^mW)=\mathbb{P}^1\).
	Other components \(v_{m+1}+\dots+v_{n-m-1}\) are determined up to an action of the multiplication group
	\(1+kt+\dots+kt^{n-2m-1}\simeq \mathbb{A}^{n-2m-1}\).
	So \(\dim\Quot(t^n)=n\) and there is a stratification
	\[
		\Quot(t^{n})=\Quot(t^{n},0)\cup\Quot(t^{n},1)\cup\dots\cup\Quot(t^{n},b)
	\]
	according to the value of \(m=0,1,\dots,b\) and each stratum \(\Quot(t^{n},m)\)
	is an \(\mathbb{A}^{n-2m-1}\)-bundle over \(\mathbb{P}^1\),
	except that for \(n=2b\) and \(m=b\) one has \(V=t^bW\oplus\dots\oplus t^{2b-1}W\).
	So the stratum \(\Quot(t^{2b},b)\) is a single point.

	For example, in the special case \(n=3\) one has \(b=1\) and \(m=0,1\).
	The \(3\)-dimensional space \(\Quot(t^3)\simeq\Quot(t^3,0)\cup\Quot(t^3,1)\).
	Since \(\Quot(t^3,0)\) is an \(\mathbb{A}^2\)-bundle over \(\mathbb{P}^1\) and
	\(\Quot(t^3,1)\) is isomorphic to \(\mathbb{P}^1\),
	the dimension of stratum drops by \(2\).

\end{setup}

\section{Degree one stable pairs on curves}

In this and following sections we give an in depth analysis of the moduli spaces \(M_Z(2,n)\)
and the fibre of
\[
	M_Z(2,n)\longrightarrow \Hilb_Z^n
\]
for rank two stable pairs on curves and of degree \(n\leq 3\).

\begin{example}[Degree one]\label{exa:degree_one}
	Assume \(Z\) is a smooth projective curve and
	assume \(\mathcal{E},s\) is a stable pair of rank \(2\) and degree \(1\).
	Let
	\[
		X, D_1,D_2, A \longrightarrow Z
	\]
	be the associated model.
	By \cref{lem:intersection_of_two_sections_deg},
	\(D_1\cdot D_2=1\),
	so \(D_1\cap D_2\subseteq\) one fibre of \(f\),
	and divisor \(Q=\coker(s)\) is one point \(q\).
	Let \(F\) be the fibre over \(q\).

	\medskip
	\textbf{Case 1:}
	\(D_1,D_2,A\) have no vertical components.
	Then the associated stable minimal model is obtained as in this picture:
	\begin{center}
		\begin{tikzpicture}[x=0.75pt,y=0.75pt,yscale=-0.7,xscale=0.7]
			\draw (283,3) node [anchor=north west][inner sep=0.75pt] [xscale=0.7,yscale=0.7] {\(X^{\sim}\)};
			\draw (263,20) .. controls (313,43) and (387,-5) .. (424,30);
			\draw (248,124) .. controls (298,147) and (372,99) .. (409,134);
			\draw (263,20) -- (248,124);
			\draw (424,30) -- (409,134);
			\draw (261,40) .. controls (299,65) and (370,34) .. (421,45);
			\draw (426,35) node [anchor=north west][inner sep=0.75pt] [xscale=0.7,yscale=0.7] {\(D_{1}^{\sim}\)};
			\draw (258,61) .. controls (296,87) and (367,56) .. (418,67);
			\draw (423,58) node [anchor=north west][inner sep=0.75pt] [xscale=0.7,yscale=0.7] {\(A^{\sim}\)};
			\draw (255,83) .. controls (293,108) and (364,77) .. (415,88);
			\draw (420,82) node [anchor=north west][inner sep=0.75pt] [xscale=0.7,yscale=0.7] {\(D_{2}^{\sim}\)};
			\draw [color={rgb, 255:red, 139; green, 87; blue, 42}, draw opacity=1 ] (349,31) -- (329,122);
			\draw (354,23) node [anchor=north west][inner sep=0.75pt] [color={rgb, 255:red, 139; green, 87; blue, 42}, opacity=1,xscale=0.7, yscale=0.7] {\(E\)};
			\draw (327,96) -- (349,119);
			\draw (351,100) node [anchor=north west][inner sep=0.75pt] [xscale=0.7,yscale=0.7] {\(F^{\sim}\)};

			\draw (9,286) node [anchor=north west][inner sep=0.75pt] [xscale=0.7,yscale=0.7] {\(X\)};
			\draw (25,169) .. controls (75,193) and (150,145) .. (187,179);
			\draw  (10,273) .. controls (60,297) and (135,249) .. (172,283);
			\draw  (25,169) -- (10,273);
			\draw (187,179) -- (172,283);
			\draw (14.9,246) .. controls (66,246) and (132.9,183) .. (184,195);
			\draw (0,180) node [anchor=north west][inner sep=0.75pt] [xscale=0.7,yscale=0.7] {\(D_{1}\)};
			\draw (20,213) .. controls (64,227) and (133,204) .. (181,219);
			\draw (182.8,212) node [anchor=north west][inner sep=0.75pt] [xscale=0.7,yscale=0.7] {\(A\)};
			\draw (23,189) .. controls (79,182) and (125.9,245) .. (175.9,251);
			\draw (186,188) node [anchor=north west][inner sep=0.75pt] [xscale=0.7,yscale=0.7] {\(D_{2}\)};
			\draw (111,175) -- (91,266);
			\draw (98,249) node [anchor=north west][inner sep=0.75pt] [xscale=0.7,yscale=0.7] {\(F\)};
			\draw [fill={rgb, 255:red, 0; green, 0; blue, 0}, fill opacity=1] (100,215) .. controls (100,214) and (101,213) .. (102,213) .. controls (103,213) and (104,214) .. (104,215) .. controls (104,216) and (103,217) .. (102,217) .. controls (101,217) and (100,216) .. (100,215) -- cycle;

			\draw (445,269) node [anchor=north west][inner sep=0.75pt] [xscale=0.7,yscale=0.7] {\(X'\)};
			\draw (483.9,171) .. controls (533.9,195) and (608,147) .. (645,181);
			\draw (468.9,275) .. controls (518.9,299) and (593,251) .. (630,285);
			\draw (483.9,171) -- (468.9,275);
			\draw (645,181) -- (630,285);
			\draw (481,193) .. controls (520,219) and (591,187) .. (642,199);
			\draw (645,190) node [anchor=north west][inner sep=0.75pt] [xscale=0.7,yscale=0.7] {\(D_{1} '=D_{1}^{\sim}\)};
			\draw (478,215) .. controls (517,240) and (588,209) .. (639,220);
			\draw (642.9,212.2) node [anchor=north west][inner sep=0.75pt] [xscale=0.7,yscale=0.7] {\(A'=A^{\sim}\)};
			\draw (475,236) .. controls (514,262) and (585,230) .. (636,242);
			\draw (640,234) node [anchor=north west][inner sep=0.75pt] [xscale=0.7,yscale=0.7] {\(D_{2} '=D_{2}^{\sim}\)};
			\draw [color={rgb, 255:red, 245; green, 166; blue, 35}, draw opacity=1] (569,184) -- (549,276);
			\draw (576,177) node [anchor=north west][inner sep=0.75pt] [color={rgb, 255:red, 245; green, 166; blue, 35 }, opacity=1, xscale=0.7, yscale=0.7] {\(F'\)};
			\draw [fill={rgb, 255:red, 0; green, 0; blue, 0}, fill opacity=1] (564,200) .. controls (564,199) and (565,198) .. (566,198) .. controls (567,198) and (568,199) .. (568,200) .. controls (568,202) and (567,202) .. (566,202) .. controls (565,202) and (564,202) .. (564,200) -- cycle;
			\draw [fill={rgb, 255:red, 0; green, 0; blue, 0}, fill opacity=1] (559,222) .. controls (559,221) and (560,220) .. (561,220) .. controls (562,220) and (563,221) .. (563,222) .. controls (563,223.3) and (562,224) .. (561,224) .. controls (560,224) and (559,223.3) .. (559,222) -- cycle;
			\draw [fill={rgb, 255:red, 0; green, 0; blue, 0}, fill opacity=1] (552,255) .. controls (552,254) and (553,253) .. (554,253) .. controls (555,253) and (556,254) .. (556,255) .. controls (556,256) and (555,257) .. (554,257) .. controls (553,257) and (552,256) .. (552,255) -- cycle;
			\draw [fill={rgb, 255:red, 0; green, 0; blue, 0}, fill opacity=1] (555,243) .. controls (555,242) and (556,241) .. (557,241) .. controls (558,241) and (559,242) .. (559,243) .. controls (559,244) and (558,245) .. (557,245) .. controls (556,245) and (555,244) .. (555,243) -- cycle;
			\draw (562,249) node [anchor=north west][inner sep=0.75pt] [xscale=0.7,yscale=0.7] {\(p\)};

			\draw (245.9,360) .. controls (295.9,384) and (370,336) .. (407,370);
			\draw (364,365) node [anchor=north west][inner sep=0.75pt] [xscale=0.7,yscale=0.7] {\(Z\)};
			\draw  [fill={rgb, 255:red, 0; green, 0; blue, 0}, fill opacity=1] (315,363) .. controls (315,362) and (316,361) .. (317,361) .. controls (318.6,361) and (319,362) .. (319,363) .. controls (319,365) and (318.6,365) .. (317,365) .. controls (316,365) and (315,365) .. (315,363) -- cycle;
			\draw (311,336.9) node [anchor=north west][inner sep=0.75pt] [xscale=0.7,yscale=0.7] {\(q\)};

			\draw (191,293.5) -- (239,342);
			\draw [shift={(240,343)}, rotate = 225.1] [color={rgb, 255:red, 0; green, 0; blue, 0}][line width=0.75] (11,-3) .. controls (7,-1) and (3,0) .. (0,0) .. controls (3,0) and (7,1) .. (11,3);
			\draw (197,317.9) node [anchor=north west][inner sep=0.75pt] [xscale=0.7,yscale=0.7] {\(f\)};
			\draw (452,104) -- (500,151.92);
			\draw [shift={(501,153)}, rotate = 225.1] [color={rgb, 255:red, 0; green, 0; blue, 0}][line width=0.75] (11,-3) .. controls (7,-1) and (3,0) .. (0,0) .. controls (3,0) and (7,1) .. (11,3);
			\draw (485,114) node [anchor=north west][inner sep=0.75pt] [xscale=0.7,yscale=0.7] {blowdown of \(F^{\sim}\)};
			\draw (231,104) -- (183,150.6);
			\draw [shift={(182,152)}, rotate = 316] [color={rgb, 255:red, 0; green, 0; blue, 0}][line width=0.75] (11,-3) .. controls (7,-1) and (3,0) .. (0,0) .. controls (3,0) and (7,1) .. (11,3);
			\draw (60,111) node [anchor=north west][inner sep=0.75pt] [xscale=0.7,yscale=0.7] {blowup of \(D_{1}\cap D_{2}\)};
			\draw (480,294) -- (432,341);
			\draw [shift={(431,342)}, rotate = 316] [color={rgb, 255:red, 0; green, 0; blue, 0}][line width=0.75] (11,-3) .. controls (7,-1) and (3,0) .. (0,0) .. controls (3,0) and (7,1) .. (11,3);
			\draw (463,321) node [anchor=north west][inner sep=0.75pt] [xscale=0.7,yscale=0.7] {\(f'\)};

			\draw (51,22) node [anchor=north west][inner sep=0.75pt] [xscale=0.7,yscale=0.7] {\(E=\) exceptional divisor};
			\draw (561,304) node [anchor=north west][inner sep=0.75pt] [xscale=0.7,yscale=0.7] {$\begin{array}{l}
						F'=E^{\sim}           \\
						B'=D_{1} '+D_{2} '+F' \\
						p=\text{image of } F^{\sim}
					\end{array}$};
		\end{tikzpicture}
	\end{center}
	Here \(X'\simeq Z\times\mathbb{P}^1\) and
	\(D_1',D_2',A'\) are fibres on the projection \(X'\to \mathbb{P}^1\).

	\medskip
	\textbf{Case 2:}
	\(D_1\) or \(D_2\) has a vertical component.
	Since \(D_1\cdot D_2=1\) and since \(D_1,D_2\) have no common horizontal component,
	only one of \(D_1,D_2\) can have a vertical component,
	say \(D_1\).
	For similar reason,
	\(A\) cannot have a vertical component.
	Then the associated stable minimal model is obtained as in this picture:
	\begin{center}
		\begin{tikzpicture}[x=0.75pt,y=0.75pt,yscale=-0.7,xscale=0.7]
			\draw (283,3) node [anchor=north west][inner sep=0.75pt] [xscale=0.7,yscale=0.7] {\(X^{\sim}\)};
			\draw (263,20) .. controls (312.9,43) and (387,-5) .. (424,30);
			\draw (247.9,124) .. controls (297.9,147) and (372,99) .. (409,134);
			\draw (262.9,20) -- (247.9,124);
			\draw (424,30) -- (409,134);
			\draw (260.9,40) .. controls (299,65) and (370,34) .. (421,45);
			\draw (426,38) node [anchor=north west][inner sep=0.75pt] [xscale=0.7,yscale=0.7] {\(D_{2}^{\sim}\)};
			\draw (257.9,61) .. controls (296,87) and (367,56) .. (418,67);
			\draw (422,60) node [anchor=north west][inner sep=0.75pt] [xscale=0.7,yscale=0.7] {\(A^{\sim}\)};
			\draw (251.9,103) .. controls (290,128) and (361,97) .. (412,108);
			\draw (415,100) node [anchor=north west][inner sep=0.75pt] [xscale=0.7,yscale=0.7] {\(D_{1}^{\sim}\)};
			\draw [color={rgb, 255:red, 139; green, 87; blue, 42}, draw opacity=1]   (348.9,31) -- (309.5,100.75);
			\draw (354,23) node [anchor=north west][inner sep=0.75pt] [color={rgb, 255:red, 139; green, 87; blue, 42}, opacity=1, xscale=0.7, yscale=0.7] {\(E\)};
			\draw (314,81) -- (333,123);
			\draw (329,86) node [anchor=north west][inner sep=0.75pt] [xscale=0.7,yscale=0.7] {\(F^{\sim}\)};

			\draw (8.8,286) node [anchor=north west][inner sep=0.75pt] [xscale=0.7,yscale=0.7] {\(X\)};
			\draw (25,169) .. controls (75,193) and (149.9,145) .. (187,179);
			\draw (10,273) .. controls (60,297) and (135,249) .. (172,283);
			\draw (25,169) -- (10,273);
			\draw (187,179) -- (172,283);
			\draw (17.9,221) .. controls (69,221) and (132.9,183) .. (183.9,195);
			\draw (187,186) node [anchor=north west][inner sep=0.75pt] [xscale=0.7,yscale=0.7] {\(D_{2}\)};
			\draw (24,189) .. controls (72,187) and (123,212.75) .. (180,220);
			\draw (184,212) node [anchor=north west][inner sep=0.75pt] [xscale=0.7,yscale=0.7] {\(A\)};
			\draw (14.9,246) .. controls (73,254) and (123,232) .. (175.5,255);
			\draw (180,249) node [anchor=north west][inner sep=0.75pt] [xscale=0.7,yscale=0.7] {\(D_{1}\)};
			\draw (110,174) -- (99,225) -- (90,265);
			\draw (98,249) node [anchor=north west][inner sep=0.75pt] [xscale=0.7,yscale=0.7] {\(F\)};
			\draw [fill={rgb, 255:red, 0; green, 0; blue, 0}, fill opacity=1] (102,203) .. controls (102,202) and (103,201) .. (104,201) .. controls (105,201) and (106,202) .. (106,203) .. controls (106,204) and (105,205) .. (104,205) .. controls (103,205) and (102,204) .. (102,203) -- cycle;

			\draw (446,269) node [anchor=north west][inner sep=0.75pt] [xscale=0.7,yscale=0.7] {\(X'\)};
			\draw (483.9,171) .. controls (533.9,195) and (608,147) .. (645,181);
			\draw (468.9,275) .. controls (518.9,299) and (593,251) .. (630,285);
			\draw (483.9,171) -- (468.9,275);
			\draw (645,181) -- (630,285);
			\draw (481,193) .. controls (520,219) and (591,187) .. (642,199);
			\draw (647,185) node [anchor=north west][inner sep=0.75pt] [xscale=0.7,yscale=0.7] {\(D_{2} '=D_{2}^{\sim}\)};
			\draw (478,215) .. controls (517,240) and (588,209) .. (639,220);
			\draw (642.9,212.2) node [anchor=north west][inner sep=0.75pt] [xscale=0.7,yscale=0.7] {\(A'=A^{\sim}\)};
			\draw (475,236) .. controls (514,262) and (585,230) .. (636,242);
			\draw (641,234) node [anchor=north west][inner sep=0.75pt] [xscale=0.7,yscale=0.7] {\(D_{1} '=D_{1}^{\sim}\)};
			\draw [color={rgb, 255:red, 245; green, 166; blue, 35}, draw opacity=1]   (569,184) -- (549,276);
			\draw (576,177) node [anchor=north west][inner sep=0.75pt] [color={rgb, 255:red, 245; green, 166; blue, 35}, opacity=1, xscale=0.7, yscale=0.7] {\(F'\)};
			\draw  [fill={rgb, 255:red, 0; green, 0; blue, 0}, fill opacity=1] (564,200) .. controls (564,199) and (565,198) .. (566,198) .. controls (567,198) and (568,199) .. (568,200) .. controls (568,202) and (567,202) .. (566,202) .. controls (565,202) and (564,202) .. (564,200) -- cycle;
			\draw  [fill={rgb, 255:red, 0; green, 0; blue, 0}, fill opacity=1] (559,222) .. controls (559,221) and (560,220) .. (561,220) .. controls (562,220) and (563,221) .. (563,222) .. controls (563,223.3) and (562,224) .. (561,224) .. controls (560,224) and (559,223.3) .. (559,222) -- cycle;
			\draw  [fill={rgb, 255:red, 0; green, 0; blue, 0}, fill opacity=1] (555,243) .. controls (555,242) and (556,241) .. (557,241) .. controls (558,241) and (559,242) .. (559,243) .. controls (559,244) and (558,245) .. (557,245) .. controls (556,245) and (555,244) .. (555,243) -- cycle;
			\draw (562,249) node [anchor=north west][inner sep=0.75pt] [xscale=0.7,yscale=0.7] {\(p\)};

			\draw (245.9,360) .. controls (295.9,384) and (370,336) .. (407,370);
			\draw (364,365) node [anchor=north west][inner sep=0.75pt] [xscale=0.7,yscale=0.7] {\(Z\)};
			\draw [fill={rgb, 255:red, 0; green, 0; blue, 0}, fill opacity=1] (315,363) .. controls (315,362) and (316,361) .. (317,361) .. controls (318.6,361) and (319,362) .. (319,363) .. controls (319,365) and (318.6,365) .. (317,365) .. controls (316,365) and (315,365) .. (315,363) -- cycle;
			\draw (311,336.9) node [anchor=north west][inner sep=0.75pt] [xscale=0.7,yscale=0.7] {\(q\)};

			\draw (191,293.5) -- (239,342);
			\draw [shift={(240,343)}, rotate = 225.1] [color={rgb, 255:red, 0; green, 0; blue, 0}][line width=0.75]  (11,-3) .. controls (7,-1) and (3,0) .. (0,0) .. controls (3,0) and (7,1) .. (11,3);
			\draw (197,317.9) node [anchor=north west][inner sep=0.75pt] [xscale=0.7,yscale=0.7] {\(f\)};
			\draw (452,104) -- (500,151.92);
			\draw [shift={(501,153)}, rotate = 225.1] [color={rgb, 255:red, 0; green, 0; blue, 0}][line width=0.75] (11,-3) .. controls (7,-1) and (3,0) .. (0,0) .. controls (3,0) and (7,1) .. (11,3);
			\draw (485,114) node [anchor=north west][inner sep=0.75pt] [xscale=0.7,yscale=0.7] {blowdown of \(F^{\sim}\)};
			\draw (231,104) -- (183,150.6);
			\draw [shift={(182,152)}, rotate = 316] [color={rgb, 255:red, 0; green, 0; blue, 0}][line width=0.75] (11,-3) .. controls (7,-1) and (3,0) .. (0,0) .. controls (3,0) and (7,1) .. (11,3);
			\draw (75,112) node [anchor=north west][inner sep=0.75pt] [xscale=0.7,yscale=0.7] {blowup of \(D_{1}\cap D_{2}\)};
			\draw (480,294) -- (432,341);
			\draw [shift={(431,342)}, rotate = 316] [color={rgb, 255:red, 0; green, 0; blue, 0}][line width=0.75] (11,-3) .. controls (7,-1) and (3,0) .. (0,0) .. controls (3,0) and (7,1) .. (11,3);
			\draw (463,321) node [anchor=north west][inner sep=0.75pt] [xscale=0.7,yscale=0.7] {\(f'\)};

			\draw (51,22) node [anchor=north west][inner sep=0.75pt] [xscale=0.7,yscale=0.7] {\(E=\) exceptional divisor};
			\draw (561,304) node [anchor=north west][inner sep=0.75pt] [xscale=0.7,yscale=0.7] {$\begin{array}{l}
						F'=E^{\sim}           \\
						B'=D_{1} '+D_{2} '+F' \\
						p=\text{image of } F^{\sim} =D_{1} '\cap F'
					\end{array}$};
		\end{tikzpicture}
	\end{center}
	Again \(X'\simeq Z\times\mathbb{P}^1\) and \(D_1',D_2',A'\) are fibres of the projection \(X'\to \mathbb{P}^1\).

	\medskip
	\textbf{Case 3:}
	\(A\) has a vertical component.
	In this case \(D_1,D_2\) cannot have a vertical component.
	Then the associated stable minimal model is obtained as in this picture:
	\begin{center}
		\begin{tikzpicture}[x=0.75pt,y=0.75pt,yscale=-0.7,xscale=0.7]
			\draw (283.9,2) node [anchor=north west][inner sep=0.75pt] [xscale=0.7,yscale=0.7] {\(X^{\sim}\)};
			\draw (262.9,20) .. controls (312.9,43) and (387,-5) .. (424,30);
			\draw (247.9,124) .. controls (297.9,147) and (372,99) .. (409,134);
			\draw (262.9,20) -- (247.9,124);
			\draw (424,30) -- (409,134);
			\draw (260.9,40) .. controls (299,65) and (370,34) .. (421,45);
			\draw (426,29.9) node [anchor=north west][inner sep=0.75pt] [xscale=0.7,yscale=0.7] {\(D_{2}^{\sim}\)};
			\draw (257.9,61) .. controls (296,87) and (367,56) .. (418,67);
			\draw (422,57) node [anchor=north west][inner sep=0.75pt] [xscale=0.7,yscale=0.7] {\(D_{1}^{\sim}\)};
			\draw (251.9,103) .. controls (290,128) and (361,97) .. (412,108);
			\draw (418,99) node [anchor=north west][inner sep=0.75pt] [xscale=0.7,yscale=0.7] {\(A^{\sim}\)};
			\draw [color={rgb, 255:red, 139; green, 87; blue, 42}, draw opacity=1]   (348.9,31) -- (309.5,101);
			\draw (354,23) node [anchor=north west][inner sep=0.75pt] [color={rgb, 255:red, 139; green, 87; blue, 42}, opacity=1, xscale=0.7, yscale=0.7] {\(E\)};
			\draw (314,81) -- (333,123);
			\draw (329,86) node [anchor=north west][inner sep=0.75pt] [xscale=0.7,yscale=0.7] {\(F^{\sim}\)};

			\draw (8.8,286) node [anchor=north west][inner sep=0.75pt] [xscale=0.7,yscale=0.7] {\(X\)};
			\draw (25,169) .. controls (75,193) and (149.9,145) .. (187,179);
			\draw (10,273) .. controls (60,297) and (135,249) .. (172,283);
			\draw (25,169) -- (10,273);
			\draw (187,179) -- (172,283);
			\draw (17.9,221) .. controls (69,221) and (134.6,184.6) .. (183.9,195);
			\draw (189.6,186) node [anchor=north west][inner sep=0.75pt] [xscale=0.7,yscale=0.7] {\(D_{2}\)};
			\draw (23,189.8) .. controls (70,176) and (121.8,222.6) .. (180,220);
			\draw (184,212) node [anchor=north west][inner sep=0.75pt] [xscale=0.7,yscale=0.7] {\(D_{1}\)};
			\draw (14.9,246) .. controls (73,254) and (123,232) .. (175.5,255);
			\draw (181,249) node [anchor=north west][inner sep=0.75pt] [xscale=0.7,yscale=0.7] {\(A\)};
			\draw (110,174) -- (99,225) -- (90,265);
			\draw (98,249) node [anchor=north west][inner sep=0.75pt] [xscale=0.7,yscale=0.7] {\(F\)};
			\draw [fill={rgb, 255:red, 0; green, 0; blue, 0}, fill opacity=1] (102,203) .. controls (102,202) and (103,201) .. (104,201) .. controls (105,201) and (106,202) .. (106,203) .. controls (106,204) and (105,205) .. (104,205) .. controls (103,205) and (102,204) .. (102,203) -- cycle;

			\draw (446,269) node [anchor=north west][inner sep=0.75pt] [xscale=0.7,yscale=0.7] {\(X'\)};
			\draw (483.9,171) .. controls (533.9,195) and (608,147) .. (645,181);
			\draw (468.9,275) .. controls (518.9,299) and (593,251) .. (630,285);
			\draw (483.9,171) -- (468.9,275);
			\draw (645,181) -- (630,285);
			\draw (481,193) .. controls (520,219) and (591,187) .. (642,199);
			\draw (647,185) node [anchor=north west][inner sep=0.75pt] [xscale=0.7,yscale=0.7] {\(D_{2} '=D_{2}^{\sim}\)};
			\draw (478,215) .. controls (517,240) and (588,209) .. (639,220);
			\draw (645,211) node [anchor=north west][inner sep=0.75pt] [xscale=0.7,yscale=0.7] {\(D_{1} '=D_{1}^{\sim}\)};
			\draw (475,236) .. controls (514,262) and (585,230) .. (636,242);
			\draw (643,236.2) node [anchor=north west][inner sep=0.75pt] [xscale=0.7,yscale=0.7] {\(A'=A^{\sim}\)};
			\draw [color={rgb, 255:red, 245; green, 166; blue, 35}, draw opacity=1] (569,184) -- (549,276);
			\draw (576,177) node [anchor=north west][inner sep=0.75pt] [color={rgb, 255:red, 245; green, 166; blue, 35}, opacity=1, xscale=0.7, yscale=0.7] {\(F'\)};
			\draw [fill={rgb, 255:red, 0; green, 0; blue, 0}, fill opacity=1] (564,200) .. controls (564,199) and (565,198) .. (566,198) .. controls (567,198) and (568,199) .. (568,200) .. controls (568,202) and (567,202) .. (566,202) .. controls (565,202) and (564,202) .. (564,200) -- cycle;
			\draw [fill={rgb, 255:red, 0; green, 0; blue, 0}, fill opacity=1] (559,222) .. controls (559,221) and (560,220) .. (561,220) .. controls (562,220) and (563,221) .. (563,222) .. controls (563,223.3) and (562,224) .. (561,224) .. controls (560,224) and (559,223) .. (559,222) -- cycle;
			\draw [fill={rgb, 255:red, 0; green, 0; blue, 0}, fill opacity=1] (555,243) .. controls (555,242) and (556,241) .. (557,241) .. controls (558,241) and (559,242) .. (559,243) .. controls (559,244) and (558,245) .. (557,245) .. controls (556,245) and (555,244) .. (555,243) -- cycle;
			\draw (562,249) node [anchor=north west][inner sep=0.75pt] [xscale=0.7,yscale=0.7] {\(p\)};

			\draw (245.9,360) .. controls (295.9,384) and (370,336) .. (407,370);
			\draw (364,365) node [anchor=north west][inner sep=0.75pt] [xscale=0.7,yscale=0.7] {\(Z\)};
			\draw [fill={rgb, 255:red, 0; green, 0; blue, 0}, fill opacity=1] (315,363) .. controls (315,362) and (316,361) .. (317,361) .. controls (318.6,361) and (319,362) .. (319,363) .. controls (319,365) and (318.6,365) .. (317,365) .. controls (316,365) and (315,365) .. (315,363) -- cycle;
			\draw (311,336.9) node [anchor=north west][inner sep=0.75pt] [xscale=0.7,yscale=0.7] {\(q\)};

			\draw (191,293.5) -- (239,342);
			\draw [shift={(240,343)}, rotate = 225.1] [color={rgb, 255:red, 0; green, 0; blue, 0 } ][line width=0.75]  (11,-3) .. controls (7,-1) and (3,0) .. (0,0) .. controls (3,0) and (7,1) .. (11,3)  ;
			\draw (197,317.9) node [anchor=north west][inner sep=0.75pt] [xscale=0.7,yscale=0.7] {\(f\)};
			\draw (452,104) -- (500,151.92);
			\draw [shift={(501,153)}, rotate = 225.1] [color={rgb, 255:red, 0; green, 0; blue, 0}] [line width=0.75] (11,-3) .. controls (7,-1) and (3,0) .. (0,0) .. controls (3,0) and (7,1) .. (11,3);
			\draw (485,114) node [anchor=north west][inner sep=0.75pt] [xscale=0.7,yscale=0.7] {blowdown of \(F^{\sim}\)};
			\draw (231,104) -- (183,150.6);
			\draw [shift={(182,152)}, rotate = 316] [color={rgb, 255:red, 0; green, 0; blue, 0}] [line width=0.75] (11,-3) .. controls (7,-1) and (3,0) .. (0,0) .. controls (3,0) and (7,1) .. (11,3);
			\draw (75,112) node [anchor=north west][inner sep=0.75pt] [xscale=0.7,yscale=0.7] {blowup of \(D_{1} \cap D_{2}\)};
			\draw (480,294) -- (432,341);
			\draw [shift={(431,342)}, rotate = 316] [color={rgb, 255:red, 0; green, 0; blue, 0}] [line width=0.75] (11,-3) .. controls (7,-1) and (3,0) .. (0,0) .. controls (3,0) and (7,1) .. (11,3);
			\draw (463,321) node [anchor=north west][inner sep=0.75pt] [xscale=0.7,yscale=0.7] {\(f'\)};

			\draw (51,22) node [anchor=north west][inner sep=0.75pt] [xscale=0.7,yscale=0.7] {\(E=\) exceptional divisor};
			\draw (561,304) node [anchor=north west][inner sep=0.75pt] [xscale=0.7,yscale=0.7] {$\begin{array}{l}
						F'=E^{\sim}           \\
						B'=D_{1} '+D_{2} '+F' \\
						p=\text{image of } F^{\sim} =A '\cap F'
					\end{array}$};
		\end{tikzpicture}
	\end{center}
	Again \(X'\simeq Z\times\mathbb{P}^1\) and \(D_1',D_2',A'\) are fibres of the projection \(X'\to \mathbb{P}^1\).

	\medskip

	In summary,
	the class \([\mathcal{E},s]\) is determined by the model \(X,D_1,D_2,A\to Z\) which is in turn
	determined by the fixed model \(X',D_1',D_2',A'\to Z\) and the point \(p\) on the fibre \(F'\).
	Therefore, the fibre of
	\[
		M_Z(2,1)\longrightarrow \Hilb_Z^1=Z
	\]
	over each point \(q\) is \(\mathbb{P}^1\)
	and \(M_Z(2,1)\simeq Z\times \mathbb{P}^1\).

	This also completes the proof of \cref{thm:main_m21}.
\end{example}

\begin{example}[Degree one on \(\mathbb{P}^1\)]
	Assume \(Z=\mathbb{P}^1\).
	Then for any \([\mathcal{E},s]\in M_Z(2,1)\), \(\mathcal{E}\simeq \mathcal{O}_{Z}\oplus \mathcal{O}_{Z}(1)\)
	as \(\mathcal{E}\) is nef (\cref{cor:stable_pair_nef}).
	Moreover, by \cref{lem:class_equiv_inclusion},
	\(M_Z(2,1)\simeq\Quot(\mathcal{O}_{Z}^2,1)\),
	more precisely,
	\(M_Z(2,1)\) is parametrising embeddings
	\[
		\mathcal{O}_{Z}\oplus \mathcal{O}_{Z}(-1) \subseteq \mathcal{O}_{Z}\oplus \mathcal{O}_{Z}.
	\]
	Tensoring with \(\mathcal{O}_{Z}(1)\) we get
	\[
		\mathcal{O}_{Z}(1)\oplus \mathcal{O}_{Z} \subseteq \mathcal{O}_{Z}(1)\oplus \mathcal{O}_{Z}(1).
	\]
	Since both sheaves are generated by global sections,
	the above inclusion can be recovered from
	\[
		H^{0}(Z,\mathcal{O}_{Z}(1)\oplus\mathcal{O}_{Z})\subseteq H^{0}(Z,\mathcal{O}_{Z}(1)\oplus \mathcal{O}_{Z}(1)).
	\]
	Thus we get an embedding into a Grassmannian:
	\[
		M_Z(2,1) \hooklongrightarrow \Grass(3,4)\simeq \mathbb{P}^3.
	\]
	Since \(M_Z(2,1)\) has a \(\mathbb{P}^1\)-fibration onto \(Z\),
	which is of Picard number two,
	\(M_Z(2,1)\) is a hypersurface in \(\mathbb{P}^3\) of degree \(2\).
	Therefore,
	\[
		M_Z(2,1) \simeq \mathbb{P}^1 \times \mathbb{P}^1.
	\]
	This is of course consistent with \cref{exa:degree_one}
	where giving a class \([\mathcal{E},s]\in M_Z(2,1)\) is the same as a picking point \(q\in Z=\mathbb{P}^1\)
	and then picking a point \(p\) on the fibre \(F'=\mathbb{P}^1\) of \(X'\to Z\).
\end{example}

\begin{remark}\label{rem:the_edge_case}
	Here is another point of view.

	Assume \(Z=\mathbb{P}^1\).
	Let \(\mathcal{G}_1=\mathcal{O}_Z^{r-1}\), \(\mathcal{G}_2=\mathcal{O}_Z(n)\)
	and \(\mathcal{E}\coloneqq \mathcal{G}_1\oplus \mathcal{G}_2\).
	We consider the moduli space \(\mathcal{M}\) of maps \(\mathcal{O}_Z^r\xrightarrow{s}\mathcal{E}\)
	with the condition that its cokernel has \(0\)-dimensional support and the following equivalence holds
	\[
		\xymatrix@C=3em{
		\mathcal{O}_Z^r \ar[r]^-{s} \ar@{=}[d]_{\id} & \mathcal{E} \ar[d]^{\rotatebox{90}{\(\sim\)}} \\
		\mathcal{O}_Z^{r} \ar[r]_-{s'} & \mathcal{E}.
		}
	\]
	This \(\mathcal{M}\) can be viewed as a subvariety of the Quot-scheme \(\Quot(\mathcal{O}_{Z}^r,n)\),
	while the latter is smooth of dimension \(nr\).

	Such a map \(s\colon \mathcal{O}_Z^r\to \mathcal{E}\) is given by
	\[
		M=
		\begin{pmatrix}
			\alpha_1 & \alpha_2 & \cdots & \alpha_r \\
			\beta_1  & \beta_2  & \cdots & \beta_r
		\end{pmatrix}
		\in \overline{\mathcal{M}}
	\]
	where \(\alpha_{i}\in H^0(\mathcal{G}_2)\eqqcolon H\) and \(\beta_{i}\in H^0(\mathcal{G}_1)\)
	such that its determinant is a non-zero vector in \(H\).
	In particular,
	those \(\beta_i\)'s determine an \((r-1)\)-dimensional subspace in \(V\coloneqq H^0(\mathcal{O}_Z^r)\),
	denoted by \(\beta_M\).
	Then there is a morphism
	\begin{equation*}
		\overline{\pi}\colon \overline{\mathcal{M}}\longrightarrow H\times V, \quad
		M=
		\begin{pmatrix}
			\alpha_1 & \alpha_2 & \cdots & \alpha_r \\
			\beta_1  & \beta_2  & \cdots & \beta_r
		\end{pmatrix}
		\longmapsto (\det M,\beta_M).
	\end{equation*}
	Note that
	\[
		\Aut(\mathcal{E})=(\Aut(\mathcal{G}_1)\times\Aut(\mathcal{G}_2))\ltimes\Hom(\mathcal{G}_1,\mathcal{G}_2)
		=(k^{*}\times\GL(r-1))\ltimes\Hom(\mathcal{O}_Z^{r-1},\mathcal{O}_Z(n))
	\]
	acts on \(\overline{\mathcal{M}}\) by matrix multiplication
	\[
		\begin{pmatrix}
			\gamma_1 & \varphi  \\
			0        & \gamma_2
		\end{pmatrix}
		\begin{pmatrix}
			\alpha_1 & \alpha_2 & \cdots & \alpha_r \\
			\beta_1  & \beta_2  & \cdots & \beta_r
		\end{pmatrix}
		=
		\begin{pmatrix}
			\gamma_1\alpha_1+\varphi\beta_1 & \gamma_1\alpha_2+\varphi\beta_2 & \cdots & \gamma_1\alpha_r+\varphi\beta_r \\
			\gamma_2\beta_1                 & \gamma_2\beta_2                 & \cdots & \gamma_2\beta_r
		\end{pmatrix}
	\]
	where \(\gamma_1\in k^*\), \(\gamma_2\in\GL(r-1)\) and \(\varphi\in\Hom(\mathcal{O}_Z^{r-1},\mathcal{O}_Z(n))\).
	It is not hard to see that \(\det M\) is invariant up to scaling of \(\gamma_1\cdot\det\gamma_2\)
	under the action of \(\Aut(\mathcal{E})\),
	while \(\beta_M\) is invariant as a linear subspace of \(V\).
	Hence there is an induced morphism
	\(\pi\colon\mathcal{M}=\overline{\mathcal{M}}/\Aut(\mathcal{E})\to \mathbb{P}(H)\times\mathbb{P}(V)\).
	The injectivity and surjectivity can be derived from the definition.
	The surjectivity of \(\pi\) is clear.
	Assume that \(\pi(M)=\pi(M')\) for some \(M,M'\in \overline{\mathcal{M}}\) with
	\[
		M=
		\begin{pmatrix}
			\alpha_1 & \alpha_2 & \dots & \alpha_r \\
			\beta_1  & \beta_2  & \dots & \beta_r
		\end{pmatrix}
		\quad \text{and}\quad
		M'=
		\begin{pmatrix}
			\alpha_1' & \alpha_2' & \dots & \alpha_r' \\
			\beta_1'  & \beta_2'  & \dots & \beta_r'
		\end{pmatrix}
		.
	\]
	After multiplying by matrices from \(\Aut(\mathcal{E})\)
	we may assume that both \((\beta_2,\beta_3,\dots,\beta_r)\) and \((\beta_2',\beta_3',\dots,\beta_r')\)
	are the identity matrix \(I_{r-1}\),
	and \(\det M=\det M'\).
	Then \(\beta_1=\beta_1'\) and we may take \(\gamma_1=1\) and \(\gamma_2=I_{r-1}\).
	It is not hard to calculate that \(\det M=\alpha_1-\sum_{j=1}^{r-1}\beta_{1j}\alpha_{j+1}\).
	Now we define \(\varphi=(\varphi_1,\varphi_2,\dots,\varphi_{r-1})\in\Hom(\mathcal{O}_{Z}^{r-1},\mathcal{O}_{Z}(n))\)
	by letting \(\varphi_j=\alpha_{j+1}'-\alpha_{j+1}\).
	Then
	\[
		\begin{pmatrix}
			1 & \varphi \\
			0 & I_{r-1}
		\end{pmatrix}
		M=M'
	\]
	and hence \(\pi\) is injective.
	Since \(\mathcal{M}\) is connected
	and \(\mathbb{P}(H)\times\mathbb{P}(V)\) is smooth,
	the bijective morphism \(\pi\) is an isomorphism.

	In the case \(n=1\), any quotient of \(\mathcal{O}_Z^r\) of degree \(1\)
	has kernel \(\mathcal{O}_Z^{r-1}\oplus\mathcal{O}_Z(-1)\).
	Those quotients are corresponding to stable pairs
	\(\mathcal{O}_Z^r\to\mathcal{O}_Z^{r-1}\oplus\mathcal{O}_Z(1)\).
	In particular, we obtain \(\Quot(\mathcal{O}_{Z}^r,1)\simeq\mathbb{P}^{r-1}\times\mathbb{P}^1\).
\end{remark}

\begin{example}[Reduced cokernel divisor]\label{exa:reduced_cokernel_divisor}
	Assume \(Z\) is a smooth projective curve,
	\(\mathcal{E},s\) a stable pair with associated model
	\[
		X, D_1,D_2, A \xlongrightarrow{f} Z
	\]
	such that the cokernel divisor \(Q\) is reduced,
	say \(Q=q_1+\dots+q_n\).
	Assume \(F_{1}, \dots, F_{n}\) are the fibres passing through the points in \(D_1\cap D_2\),
	i.e., fibres over the \(q_{i}\).
	Then the stable minimal model \((X',B'=D_1'+D_2'),A'\to Z\) is obtained by blowing up \(D_1\cap D_2\)
	followed by blowing down \(F^{\sim}_{1}, \dots, F^{\sim}_{n}\).
	This picture illustrates the case \(n=4\) when \(D_1,D_2,A\) have no vertical component.

	\begin{center}
		\begin{tikzpicture}[x=0.75pt,y=0.75pt,yscale=-0.75,xscale=0.75]
			\draw (283,4) node [anchor=north west][inner sep=0.75pt] [xscale=0.75,yscale=0.75] {\(X^{\sim}\)};
			\draw (264,20) .. controls (314,43) and (389,-5) .. (426,30);
			\draw (249,124) .. controls (299,147) and (374,99) .. (411,134);
			\draw (264,20) -- (249,124);
			\draw (426,30) -- (411,134);
			\draw (262,40) .. controls (301,65) and (372,34) .. (423,45);
			\draw (427,34) node [anchor=north west][inner sep=0.75pt] [xscale=0.75,yscale=0.75] {\(D_{1}^{\sim}\)};
			\draw (259,61) .. controls (298,87) and (369,56) .. (420,67);
			\draw (424,59) node [anchor=north west][inner sep=0.75pt] [xscale=0.75,yscale=0.75] {\(A^{\sim}\)};
			\draw (256,84) .. controls (294,110) and (365,78) .. (416,90);
			\draw (423,84) node [anchor=north west][inner sep=0.75pt] [xscale=0.75,yscale=0.75] {\(D_{2}^{\sim}\)};
			\draw [color={rgb, 255:red, 139; green, 87; blue, 42}, draw opacity=1]   (305.6,34) -- (285.7,110);
			\draw (283,27) node [anchor=north west][inner sep=0.75pt] [color={rgb, 255:red, 139; green, 87; blue, 42 }, opacity=1, xscale=0.75, yscale=0.75] {\(E_{1}\)};
			\draw [color={rgb, 255:red, 139; green, 87; blue, 42}, draw opacity=1]   (333.6,37) -- (313.7,113);
			\draw (313,28) node [anchor=north west][inner sep=0.75pt] [color={rgb, 255:red, 139; green, 87; blue, 42 }, opacity=1, xscale=0.75, yscale=0.75] {\(E_{2}\)};
			\draw [color={rgb, 255:red, 139; green, 87; blue, 42}, draw opacity=1]   (368,34) -- (347.7,111);
			\draw (346,24) node [anchor=north west][inner sep=0.75pt] [color={rgb, 255:red, 139; green, 87; blue, 42 }, opacity=1, xscale=0.75, yscale=0.75] {\(E_{3}\)};
			\draw [color={rgb, 255:red, 139; green, 87; blue, 42}, draw opacity=1]   (398,35) -- (378,111);
			\draw (377,20) node [anchor=north west][inner sep=0.75pt] [color={rgb, 255:red, 139; green, 87; blue, 42 }, opacity=1, xscale=0.75, yscale=0.75] {\(E_{4}\)};
			\draw (285,100) -- (301.7,124);
			\draw (293,132) node [anchor=north west][inner sep=0.75pt] [xscale=0.75,yscale=0.75] {\(F_{1}^{\sim}\)};
			\draw (314,99) -- (330,122);
			\draw (328,127.57) node [anchor=north west][inner sep=0.75pt] [xscale=0.75,yscale=0.75] {\(F_{2}^{\sim}\)};
			\draw (346,96) -- (363,119);
			\draw (357,123) node [anchor=north west][inner sep=0.75pt] [xscale=0.75,yscale=0.75] {\(F_{3}^{\sim}\)};
			\draw (377,95) -- (393.7,118);
			\draw (386.54,126) node [anchor=north west][inner sep=0.75pt] [xscale=0.75,yscale=0.75] {\(F_{4}^{\sim}\)};

			\draw (1,278) node [anchor=north west][inner sep=0.75pt] [xscale=0.75,yscale=0.75] {\(X\)};
			\draw (27,169) .. controls (77,193) and (151,145) .. (188,179);
			\draw (12,273) .. controls (62,297) and (136,249) .. (173,283);
			\draw (27,169) -- (12,273);
			\draw (188,179) -- (173,283);
			\draw (26,201.89) .. controls (37,217) and (34,255) .. (56,254) .. controls (78,253) and (82.7,193) .. (103,193) .. controls (123.59,194) and (129.81,258) .. (145,259) .. controls (160,260) and (175,216) .. (186,199);
			\draw (190,194) node [anchor=north west][inner sep=0.75pt] [xscale=0.75,yscale=0.75] {\(D_{1}\)};
			\draw (18.5,222) .. controls (66,218) and (124,227) .. (181,231);
			\draw (186,222) node [anchor=north west][inner sep=0.75pt] [xscale=0.75,yscale=0.75] {\(A\)};
			\draw (16,253) .. controls (30,240) and (34.7,197) .. (58,198) .. controls (81,199) and (87,255.89) .. (103,257) .. controls (119.59,257) and (135,192) .. (152,193) .. controls (168,194) and (171,249) .. (176,258);
			\draw (181,251) node [anchor=north west][inner sep=0.75pt] [xscale=0.75,yscale=0.75] {\(D_{2}\)};
			\draw (41,177) -- (24,273);
			\draw (21,281) node [anchor=north west][inner sep=0.75pt] [xscale=0.75,yscale=0.75] {\(F_{1}\)};
			\draw (88.2,177) -- (71.7,273);
			\draw (65.7,281) node [anchor=north west][inner sep=0.75pt] [xscale=0.75,yscale=0.75] {\(F_{2}\)};
			\draw (134,173) -- (120,269);
			\draw (115,273) node [anchor=north west][inner sep=0.75pt] [xscale=0.75,yscale=0.75] {\(F_{3}\)};
			\draw (178.5,178) -- (162.5,274.6);
			\draw (154,281) node [anchor=north west][inner sep=0.75pt] [xscale=0.75,yscale=0.75] {\(F_{4}\)};
			\draw  [fill={rgb, 255:red, 0; green, 0; blue, 0}, fill opacity=1] (31.5,221) .. controls (31.5,220) and (32.5,219) .. (33.5,219) .. controls (34.5,219) and (35.5,220) .. (35.5,221) .. controls (35.5,222) and (34.5,223) .. (33.5,223) .. controls (32.5,223) and (31.5,222) .. (31.5,221) -- cycle;
			\draw  [fill={rgb, 255:red, 0; green, 0; blue, 0}, fill opacity=1] (77.9,222.7) .. controls (77.9,221.7) and (78.9,220.7) .. (79.9,220.7) .. controls (80.9,220.7) and (81.9,221.7) .. (81.9,222.7) .. controls (81.9,223.7) and (80.9,224.7) .. (79.9,224.7) .. controls (78.9,224.7) and (77.9,223.7) .. (77.9,222.7) -- cycle;
			\draw  [fill={rgb, 255:red, 0; green, 0; blue, 0}, fill opacity=1] (124,226) .. controls (124,225) and (125,224) .. (126,224) .. controls (127,224) and (128,225) .. (128,226) .. controls (128,228) and (127,228) .. (126,228) .. controls (125,228) and (124,228) .. (124,226) -- cycle;
			\draw  [fill={rgb, 255:red, 0; green, 0; blue, 0}, fill opacity=1] (168,230) .. controls (168,229) and (169,228) .. (170,228) .. controls (171,228) and (172,229) .. (172,230) .. controls (172,232) and (171,232) .. (170,232) .. controls (169,232) and (168,232) .. (168,230) -- cycle;

			\draw (447,269) node [anchor=north west][inner sep=0.75pt] [xscale=0.75,yscale=0.75] {\(X'\)};
			\draw (485,171) .. controls (535,195) and (610,147) .. (647,181);
			\draw (470,275) .. controls (520,299) and (595,251) .. (632,285);
			\draw (485,171) -- (470,275);
			\draw (647,181) -- (632,285);
			\draw (483,193) .. controls (521,219) and (592,187) .. (643,199);
			\draw (647,188) node [anchor=north west][inner sep=0.75pt] [xscale=0.75,yscale=0.75] {\(D_{1}'=D_{1}^{\sim}\)};
			\draw (480,215) .. controls (518,240) and (589,209) .. (640,220);
			\draw (645,213) node [anchor=north west][inner sep=0.75pt] [xscale=0.75,yscale=0.75] {\(A'=A^{\sim}\)};
			\draw (477,236) .. controls (515,262) and (586,230) .. (637,242);
			\draw (641,237) node [anchor=north west][inner sep=0.75pt] [xscale=0.75,yscale=0.75] {\(D_{2}'=D_{2}^{\sim}\)};
			\draw [color={rgb, 255:red, 245; green, 166; blue, 35}, draw opacity=1]   (524,183) -- (504,274);
			\draw (497,179) node [anchor=north west][inner sep=0.75pt] [color={rgb, 255:red, 245; green, 166; blue, 35 }, opacity=1, xscale=0.75, yscale=0.75] {\(F_{1}'\)};
			\draw [color={rgb, 255:red, 245; green, 166; blue, 35}, draw opacity=1]   (559,183) -- (539,274);
			\draw (537,179) node [anchor=north west][inner sep=0.75pt] [color={rgb, 255:red, 245; green, 166; blue, 35 }, opacity=1, xscale=0.75, yscale=0.75] {\(F_{2}'\)};
			\draw [color={rgb, 255:red, 245; green, 166; blue, 35}, draw opacity=1]   (596,177) -- (576,268);
			\draw (575,172) node [anchor=north west][inner sep=0.75pt] [color={rgb, 255:red, 245; green, 166; blue, 35 }, opacity=1, xscale=0.75, yscale=0.75] {\(F_{3}'\)};
			\draw [color={rgb, 255:red, 245; green, 166; blue, 35}, draw opacity=1]   (628,177) -- (608,268);
			\draw (607,170) node [anchor=north west][inner sep=0.75pt] [color={rgb, 255:red, 245; green, 166; blue, 35 }, opacity=1, xscale=0.75, yscale=0.75] {\(F_{4}'\)};
			\draw  [fill={rgb, 255:red, 0; green, 0; blue, 0}, fill opacity=1] (505,260) .. controls (505,259) and (506,258) .. (507,258) .. controls (508,258) and (509,259) .. (509,260) .. controls (509,261) and (508,262) .. (507,262) .. controls (506,262) and (505,261) .. (505,260) -- cycle;
			\draw (486.6,249) node [anchor=north west][inner sep=0.75pt] [xscale=0.75,yscale=0.75] {\(p_{1}\)};
			\draw  [fill={rgb, 255:red, 0; green, 0; blue, 0}, fill opacity=1] (540,260) .. controls (540,259) and (541,258.5) .. (542,258.5) .. controls (543,258.5) and (544,259) .. (544,260) .. controls (544,262) and (543,262.5) .. (542,262.5) .. controls (541,262.5) and (540,262) .. (540,260) -- cycle;
			\draw (522,247) node [anchor=north west][inner sep=0.75pt] [xscale=0.75,yscale=0.75] {\(p_{2}\)};
			\draw  [fill={rgb, 255:red, 0; green, 0; blue, 0}, fill opacity=1] (577,256) .. controls (577,255) and (578,254) .. (579,254) .. controls (580,254) and (581,255) .. (581,256) .. controls (581,257) and (580,258) .. (579,258) .. controls (578,258) and (577,257) .. (577,256) -- cycle;
			\draw (559,244.9) node [anchor=north west][inner sep=0.75pt] [xscale=0.75,yscale=0.75] {\(p_{3}\)};
			\draw  [fill={rgb, 255:red, 0; green, 0; blue, 0}, fill opacity=1] (609.2,255) .. controls (609.2,253.9) and (610,253) .. (611.2,253) .. controls (612,253) and (613.2,253.9) .. (613.2,255) .. controls (613.2,256) and (612,257) .. (611.2,257) .. controls (610,257) and (609.2,256) .. (609.2,255) -- cycle;
			\draw (593,242) node [anchor=north west][inner sep=0.75pt] [xscale=0.75,yscale=0.75] {\(p_{4}\)};

			\draw (247,360) .. controls (297,384) and (372,336) .. (409,370);
			\draw (365,365) node [anchor=north west][inner sep=0.75pt] [xscale=0.75,yscale=0.75] {\(Z\)};
			\draw  [fill={rgb, 255:red, 0; green, 0; blue, 0}, fill opacity=1] (275,367) .. controls (275,366) and (276,365) .. (277,365) .. controls (278,365) and (279,366) .. (279,367) .. controls (279,368) and (278,369) .. (277,369) .. controls (276,369) and (275,368) .. (275,367) -- cycle;
			\draw (271,341) node [anchor=north west][inner sep=0.75pt] [xscale=0.75,yscale=0.75] {\(q_{1}\)};
			\draw  [fill={rgb, 255:red, 0; green, 0; blue, 0}, fill opacity=1] (310,365) .. controls (310,364) and (311,363) .. (312,363) .. controls (313,363) and (314,364) .. (314,365) .. controls (314,366) and (313,367) .. (312,367) .. controls (311,367) and (310,366) .. (310,365) -- cycle;
			\draw (304.2,339) node [anchor=north west][inner sep=0.75pt] [xscale=0.75,yscale=0.75] {\(q_{2}\)};
			\draw  [fill={rgb, 255:red, 0; green, 0; blue, 0}, fill opacity=1] (340,360) .. controls (340,359) and (341,358) .. (342,358) .. controls (343,358) and (344,359) .. (344,360) .. controls (344,361) and (343,362) .. (342,362) .. controls (341,362) and (340,361) .. (340,360) -- cycle;
			\draw (335,335) node [anchor=north west][inner sep=0.75pt] [xscale=0.75,yscale=0.75] {\(q_{3}\)};
			\draw  [fill={rgb, 255:red, 0; green, 0; blue, 0}, fill opacity=1] (371,357) .. controls (371,356) and (372,355) .. (373,355) .. controls (373.8,355) and (375,356) .. (375,357) .. controls (375,358) and (373.8,359) .. (373,359) .. controls (372,359) and (371,358) .. (371,357) -- cycle;
			\draw (364.7,335) node [anchor=north west][inner sep=0.75pt] [xscale=0.75,yscale=0.75] {\(q_{4}\)};

			\draw (192,293.5) -- (240,342);
			\draw [shift={(242,343)}, rotate = 225.1] [color={rgb, 255:red, 0; green, 0; blue, 0}][line width=0.75] (11,-3) .. controls (7,-1) and (3,0) .. (0,0) .. controls (3,0) and (7,1) .. (11,3);
			\draw (198,317.9) node [anchor=north west][inner sep=0.75pt] [xscale=0.75,yscale=0.75] {\(f\)};
			\draw (453,104) -- (501,152);
			\draw [shift={(503,153)}, rotate = 225.1] [color={rgb, 255:red, 0; green, 0; blue, 0}][line width=0.75] (11,-3) .. controls (7,-1) and (3,0) .. (0,0) .. controls (3,0) and (7,1) .. (11,3);
			\draw (486,114) node [anchor=north west][inner sep=0.75pt] [xscale=0.75,yscale=0.75] {blowdown of \(F_{i}^{\sim}\)};
			\draw (232,104) -- (185,150.6);
			\draw [shift={(183,152)}, rotate = 316] [color={rgb, 255:red, 0; green, 0; blue, 0}][line width=0.75] (11,-3) .. controls (7,-1) and (3,0) .. (0,0) .. controls (3,0) and (7,1) .. (11,3);
			\draw (76,112) node [anchor=north west][inner sep=0.75pt] [xscale=0.75,yscale=0.75] {blowup of \(D_{1}\cap D_{2}\)};
			\draw (481,294) -- (434,341);
			\draw [shift={(432,342)}, rotate = 316] [color={rgb, 255:red, 0; green, 0; blue, 0}][line width=0.75]    (11,-3) .. controls (7,-1) and (3,0) .. (0,0) .. controls (3,0) and (7,1) .. (11,3);
			\draw (465,321) node [anchor=north west][inner sep=0.75pt] [xscale=0.75,yscale=0.75] {\(f'\)};

			\draw (53,22) node [anchor=north west][inner sep=0.75pt] [xscale=0.75,yscale=0.75] {\(E=\) exceptional divisor};
			\draw (7,348) node [anchor=north west][inner sep=0.75pt] [xscale=0.75,yscale=0.75] {cokernel divisor \(=q_{1}+\cdots+q_{4}\)};
			\draw (577,301) node [anchor=north west][inner sep=0.75pt] [xscale=0.75,yscale=0.75] {\(\begin{array}{l}
					F_{i} '=E_{i}^{\sim} \\
					p_{i} =\text{image of } F_{i}^{\sim}
				\end{array}\)};
		\end{tikzpicture}
	\end{center}

	The stable minimal model \((X',B'),A'\to Z\) satisfies
	\[
		X'=Z\times\mathbb{P}^1, \quad B'=D_1'+D_2'+\sum_{1}^{n}F_{i}'
	\]
	and this is equipped with the marked points \(p_{i}\in F_{i}'\).
	When \(D_1,D_2,A\) have no vertical component we have
	\[
		p_{i}\in F_{i}'\setminus (D_1'\cup D_2'\cup A') \simeq \mathbb{P}^1\setminus \{0,1,\infty\}.
	\]
	The marked stable minimal model
	\[
		(X',B'), A', p_{1}, \dots, p_{n}
	\]
	determines the equivalence class \([\mathcal{E},s]\).
	Moreover, starting with \(n\) distinct points \(q_{1}, \dots, q_{n}\in Z\)
	and considering a marked stable minimal model as above,
	the same process produces an equivalence class \([\mathcal{E},s]\).
	In other words,
	all such marked stable minimal models are parametrised by
	\[
		\Hilb_{Z,g}^n \times \mathbb{P}^1
	\]
	where \(\Hilb_{Z,g}\subseteq \Hilb_Z^n\) is the locus corresponding to reduced divisors
	(all coefficients equal \(1\))
	on \(Z\) of degree \(n\).

	The classes \([\mathcal{E},s]\) with the properties above form an open subset of \(M_Z(2,n)\).
	Thus \(M_Z(2,n)\) is birational to the product above.

	Note also that \(p_{i}\in D_1'\) if and only if \(D_1\) has a vertical component over \(q_{i}\).
	A similar remark applies to \(D_2'\) and \(A'\).
\end{example}

\section{Degree two stable pairs on curves}

\begin{example}[Degree two on \(\mathbb{P}^1\), geometric treatment]
	Assume \(Z=\mathbb{P}^1\) and \([\mathcal{E},s]\in M_Z(2,2)\).
	Then \(\mathcal{E}=\mathcal{O}_{Z}(1)\oplus \mathcal{O}_{Z}(1)\) or
	\(\mathcal{E}=\mathcal{O}_{Z}\oplus \mathcal{O}_{Z}(2)\) as \(\mathcal{E}\) is nef.

	\medskip

	(1)
	Assume \(\mathcal{E},s\) has reduced cokernel divisor \(Q=q_1+q_2\) as in \cref{exa:reduced_cokernel_divisor}.
	Let
	\[
		X, D_1,D_2, A \longrightarrow Z
	\]
	be the associated model and
	\[
		(X',B'), A', p_1,p_2 \longrightarrow Z
	\]
	be the associated marked stable minima model.
	We claim that
	\[
		\mathcal{E}=\mathcal{O}_{Z}\oplus \mathcal{O}_{Z}(2) \iff
		p_1,p_2 \in \text{the same fibre of the second projection } X'\longrightarrow \mathbb{P}^1.
	\]
	\begin{proof}
		\((\Longrightarrow)\)
		There exists a \((-2)\)-curve \(S\subseteq X\) corresponding to
		the summand \(\mathcal{O}_{Z}\) of \(\mathcal{E}\),
		i.e., corresponding to the surjection \(\mathcal{E}\twoheadrightarrow\mathcal{O}_{Z}\).
		Let \(F_1,F_2\) be the fibre of \(X\to Z\) over the points \(q_1,q_2\) respectively.
		There is a section \(u\in H^0(\mathcal{E})\simeq H^0(\mathcal{O}_{X}(1))\) whose divisor is \(S+F_1+F_2\).
		None of \(D_1,D_2\) can pass through \(S\cap F_1\) or \(S\cap F_2\)
		otherwise \(2=\deg \mathcal{E}=D_{i}\cdot(S+F_1+F_2)>2\), a contradiction.
		So if \(S'\subseteq X'\) is the birational transform of \(S\),
		then \(S'^2=0\) and \(p_1,p_2\in S'\).
		But we can check \(S'\sim D_i'\),
		so \(S'\cap D_i'=\emptyset\),
		hence \(S'\) is a fibre of the second projection \(X'=Z\times\mathbb{P}^1\to \mathbb{P}^1\).

		\medskip
		\((\Longleftarrow)\)
		Let \(S'\) be the fibre containing \(p_1,p_2\).
		Then we can check that \(S^2=-2\) where \(S\subseteq X\) is the birational transform of \(S'\).
		So this is possible only if \(\mathcal{E}=\mathcal{O}_{Z}\oplus \mathcal{O}_{Z}(2)\).
	\end{proof}

	The claim shows that the points \([\mathcal{E},s]\in M_Z(2,2)\) with
	\(\mathcal{E}=\mathcal{O}_{Z}\oplus \mathcal{O}_{Z}(2)\) and cokernel divisor \(q_1+q_2\) are
	parametrised by a copy of \(\mathbb{P}^1\).

	\medskip
	(2)
	Now assume \(\coker(s)\) divisor is non-reduced, say \(Q=2q\).

	\medskip

	\textbf{Case \Romannum{1}:}
	The fibre \(F\) over \(q\) is a component of both \(D_1,D_2\).
	Then \(X=X'\) and \(D_{i}'\) is the horizontal part of \(D_{i}\) (similarly for \(A\)).

	\medskip

	\textbf{Case \Romannum{2}:}
	\(F\) is not a component of \(D_1,D_2,A\).
	Then \(D_1\) and \(D_2\) are tangent to each other at some point:
	indeed if \(A\) intersects \(D_1\) at any point,
	then it also intersects \(D_2\) at the same point because \(A\) is the divisor of \(s_1+s_2\).
	Then the stable minimal model is obtained as follows:

	\begin{center}
		\begin{tikzpicture}[x=0.75pt,y=0.75pt,yscale=-0.75,xscale=0.75]
			\draw (396,3) .. controls (446,27) and (520.6,-21) .. (558,13);
			\draw (381,107) .. controls (431,131) and (505.6,83) .. (543,117);
			\draw (396,3) -- (381,107);
			\draw (558,13) -- (543,117);
			\draw (394,23) .. controls (433,49) and (503.6,17) .. (554.6,29);
			\draw (559.6,16) node [anchor=north west][inner sep=0.75pt] [xscale=0.75,yscale=0.75] {\(D_{1}^{\sim}\)};
			\draw (391,45) .. controls (429.6,70) and (500.6,39) .. (551.6,50);
			\draw (555.54,40) node [anchor=north west][inner sep=0.75pt] [xscale=0.75,yscale=0.75] {\(A^{\sim}\)};
			\draw (388,63) .. controls (426,89) and (498,58) .. (549,69);
			\draw (554,62) node [anchor=north west][inner sep=0.75pt] [xscale=0.75,yscale=0.75] {\(D_{2}^{\sim}\)};
			\draw [color={rgb, 255:red, 144; green, 19; blue, 254}, draw opacity=1]   (482,14) -- (467.76,91.7);
			\draw (488,5) node [anchor=north west][inner sep=0.75pt] [xscale=0.75,yscale=0.75] {\(G\)};
			\draw [color={rgb, 255:red, 245; green, 166; blue, 35}, draw opacity=1]   (480,78) -- (444.2,90);
			\draw (482,74) node [anchor=north west][inner sep=0.75pt] [color={rgb, 255:red, 245; green, 166; blue, 35 }, opacity=1, xscale=0.75, yscale=0.75] {\(E^{\sim}\)};
			\draw    (464,106) -- (451,82.58);
			\draw (459,109) node [anchor=north west][inner sep=0.75pt] [xscale=0.75,yscale=0.75] {\(F^{\sim}\)};

			\draw (1,293) node [anchor=north west][inner sep=0.75pt] [xscale=0.75,yscale=0.75] {\(X\)};
			\draw (18,176) .. controls (68,200) and (142,152) .. (179,186);
			\draw (3,280) .. controls (53,304) and (127,256) .. (164,290);
			\draw (18,176) -- (3,280);
			\draw (179,186) -- (164,290);
			\draw (5.71,256.29) .. controls (61.59,225.82) and (155,232.53) .. (168.31,262.2);
			\draw (177,215.5) node [anchor=north west][inner sep=0.75pt] [xscale=0.75,yscale=0.75] {\(D_{1}\)};
			\draw (12,218) .. controls (36.5,232) and (126.5,249) .. (175,218);
			\draw (172,258) node [anchor=north west][inner sep=0.75pt] [xscale=0.75,yscale=0.75] {\(D_{2}\)};
			\draw (9,235) .. controls (30,238) and (138,231) .. (171,244.4);
			\draw (175,235) node [anchor=north west][inner sep=0.75pt] [xscale=0.75,yscale=0.75] {\(A\)};
			\draw (108,186) -- (88,277);
			\draw (89,259) node [anchor=north west][inner sep=0.75pt] [xscale=0.75,yscale=0.75] {\(F\)};
			\draw  [fill={rgb, 255:red, 0; green, 0; blue, 0 }, fill opacity=1] (95,235.5) .. controls (95,234.4) and (96,233.5) .. (97,233.5) .. controls (98,233.5) and (99,234.4) .. (99,235.5) .. controls (99,237) and (98,237.5) .. (97,237.5) .. controls (96,237.5) and (95,237) .. (95,235.5) -- cycle;

			\draw (438,276) node [anchor=north west][inner sep=0.75pt] [xscale=0.75,yscale=0.75] {\(X'\)};
			\draw (476,178) .. controls (526,202) and (600.6,154) .. (638,188);
			\draw (461,282) .. controls (511,306) and (585.6,258) .. (623,292);
			\draw (476,178) -- (461,282);
			\draw (638,188) -- (623,292);
			\draw (474,200) .. controls (512,225) and (583,194) .. (634,205);
			\draw (639.6,191) node [anchor=north west][inner sep=0.75pt] [xscale=0.75,yscale=0.75] {\(D_{1} =D_{1}^{\sim}\)};
			\draw (471,221) .. controls (509,247) and (580,216) .. (631,227);
			\draw (637,217) node [anchor=north west][inner sep=0.75pt] [xscale=0.75,yscale=0.75] {\(A'=A^{\sim}\)};
			\draw (468,242.64) .. controls (506,268) and (577,236.89) .. (628,248);
			\draw (635.5,242.84) node [anchor=north west][inner sep=0.75pt] [xscale=0.75,yscale=0.75]	{\(D_{2}'=D_{2}^{\sim}\)};
			\draw [color={rgb, 255:red, 144; green, 19; blue, 254}, draw opacity=1]   (562,191) -- (542,282);
			\draw (541.87,289.84) node [anchor=north west][inner sep=0.75pt] [color={rgb, 255:red, 144; green, 19; blue, 254 }, opacity=1, xscale=0.75, yscale=0.75] {\(F'\)};
			\draw  [fill={rgb, 255:red, 0; green, 0; blue, 0}, fill opacity=1] (544,262) .. controls (544,261) and (545,260) .. (546,260) .. controls (547,260) and (548,261) .. (548,262) .. controls (548,263) and (547,263.97) .. (546,263.97) .. controls (545,263.97) and (544,263) .. (544,262) -- cycle;
			\draw (554.6,256) node [anchor=north west][inner sep=0.75pt] [xscale=0.75,yscale=0.75] {\(p\)};

			\draw (238,367) .. controls (288,391) and (363,343) .. (399.6,377);
			\draw (356,371) node [anchor=north west][inner sep=0.75pt] [xscale=0.75,yscale=0.75] {\(Z\)};
			\draw  [fill={rgb, 255:red, 0; green, 0; blue, 0}, fill opacity=1] (308,370) .. controls (308,369) and (309,368) .. (309.69,368) .. controls (310.8,368) and (312,369) .. (312,370) .. controls (312,371) and (310.8,372) .. (309.69,372) .. controls (309,372) and (308,371) .. (308,370) -- cycle;
			\draw (303,343.54) node [anchor=north west][inner sep=0.75pt] [xscale=0.75,yscale=0.75] {\(q\)};

			\draw (156,23) .. controls (206,47) and (280,-1) .. (317,33);
			\draw (141,127) .. controls (191,151) and (265,103) .. (302,137);
			\draw (156,23) -- (141,127);
			\draw (317,33) -- (302,137);
			\draw (145,98) .. controls (181,98) and (286.87,62) .. (312.87,66);
			\draw (319,55) node [anchor=north west][inner sep=0.75pt] [xscale=0.75,yscale=0.75] {\(D_{1}^{\sim}\)};
			\draw (148.2,81) .. controls (185,86) and (289,72) .. (309,90);
			\draw (314.8,83) node [anchor=north west][inner sep=0.75pt] [xscale=0.75,yscale=0.75] {\(A^{\sim}\)};
			\draw (149,66.7) .. controls (197,53) and (254.64,97) .. (306.64,110);
			\draw (310.8,106) node [anchor=north west][inner sep=0.75pt] [xscale=0.75,yscale=0.75] {\(D_{2}^{\sim}\)};
			\draw [color={rgb, 255:red, 245; green, 166; blue, 35}, draw opacity=1]   (243,32) -- (223,123);
			\draw (245,40) node [anchor=north west][inner sep=0.75pt] [color={rgb, 255:red, 245; green, 166; blue, 35 }, opacity=1, xscale=0.75, yscale=0.75] {\(E\)};
			\draw (220,97) -- (256,117);
			\draw (258,106) node [anchor=north west][inner sep=0.75pt] [xscale=0.75,yscale=0.75] {\(F^{\sim}\)};

			\draw (183,300) -- (231,348);
			\draw [shift={(233,349.64)}, rotate = 225.1] [color={rgb, 255:red, 0; green, 0; blue, 0}][line width=0.75] (11,-3) .. controls (7,-1) and (3,0) .. (0,0) .. controls (3,0) and (7,1) .. (11,3);
			\draw (188.93,324.54) node [anchor=north west][inner sep=0.75pt] [xscale=0.75,yscale=0.75] {\(f\)};
			\draw (502,126.97) -- (515,173);
			\draw [shift={(515.2,175)}, rotate = 254] [color={rgb, 255:red, 0; green, 0; blue, 0}][line width=0.75] (11,-3) .. controls (7,-1) and (3,0) .. (0,0) .. controls (3,0) and (7,1) .. (11,3);
			\draw (516,129) node [anchor=north west][inner sep=0.75pt] [xscale=0.75,yscale=0.75] {blowdown of \(F^{\sim}\) and then \(E^{\sim}\)};
			\draw (212,145) -- (185,175);
			\draw [shift={(183,176.64)}, rotate = 312] [color={rgb, 255:red, 0; green, 0; blue, 0}][line width=0.75] (11,-3) .. controls (7,-1) and (3,0) .. (0,0) .. controls (3,0) and (7,1) .. (11,3);
			\draw (472,301) -- (424,347.57);
			\draw [shift={(422.93,349)}, rotate = 316] [color={rgb, 255:red, 0; green, 0; blue, 0}][line width=0.75] (11,-3) .. controls (7,-1) and (3,0) .. (0,0) .. controls (3,0) and (7,1) .. (11,3);
			\draw (455.6,327.87) node [anchor=north west][inner sep=0.75pt] [xscale=0.75,yscale=0.75] {\(f'\)};
			\draw (375,30) -- (337,38.2);
			\draw [shift={(335,38.64)}, rotate = 347] [color={rgb, 255:red, 0; green, 0; blue, 0}][line width=0.75] (11,-3) .. controls (7,-1) and (3,0) .. (0,0) .. controls (3,0) and (7,1) .. (11,3);

			\draw (553,311) node [anchor=north west][inner sep=0.75pt] [xscale=0.75,yscale=0.75] {$ \begin{array}{l}
						F'=G^{\sim} \\
						p=\text{image of } E^{\sim}
					\end{array}$};
		\end{tikzpicture}
	\end{center}

	\medskip

	\textbf{Case \Romannum{3}:}
	\(F\) is a component of one of \(D_1,D_2,A\).
	(Now if \(F\) is a component of two of \(D_1,D_2,A\),
	then we are done in Case \Romannum{1}).
	Say \(F\) is a component of \(D_1\).
	Then \(D_2,A\) are tangent.
	Then the stable minimal model is obtained as follows:

	\begin{center}
		\begin{tikzpicture}[x=0.75pt,y=0.75pt,yscale=-0.75,xscale=0.75]
			\draw (396,3) .. controls (446,27) and (520.6,-21) .. (558,13);
			\draw (381,107) .. controls (431,131) and (505.6,83) .. (543,117);
			\draw (396,3) -- (381,107);
			\draw (558,13) -- (543,117);
			\draw (394,23) .. controls (433,49) and (503.6,17) .. (554.6,29);
			\draw (558.63,18.64) node [anchor=north west][inner sep=0.75pt] [xscale=0.75,yscale=0.75] {\(D_{1}^{\sim}\)};
			\draw (440,66.6) .. controls (484.6,50.6) and (500.6,39) .. (551.6,50);
			\draw (554.74,41) node [anchor=north west][inner sep=0.75pt] [xscale=0.75,yscale=0.75] {\(A^{\sim}\)};
			\draw (436.2,91) .. controls (480.6,75) and (497,64) .. (548,75);
			\draw (552,68) node [anchor=north west][inner sep=0.75pt] [xscale=0.75,yscale=0.75] {\(D_{2}^{\sim}\)};
			\draw [color={rgb, 255:red, 245; green, 166; blue, 35 }, draw opacity=1]   (447.8,15) -- (411,95);
			\draw (450,10) node [anchor=north west][inner sep=0.75pt] [color={rgb, 255:red, 245; green, 166; blue, 35 }, opacity=1, xscale=0.75, yscale=0.75] {\(E^{\sim}\)};
			\draw [color={rgb, 255:red, 144; green, 19; blue, 254}, draw opacity=1]   (427,42.61) -- (476.2,89.8);
			\draw (483,80) node [anchor=north west][inner sep=0.75pt] [color={rgb, 255:red, 144; green, 19; blue, 254 }, opacity=1, xscale=0.75, yscale=0.75] {\(G\)};
			\draw (410,84) -- (446,104);
			\draw (450,111) node [anchor=north west][inner sep=0.75pt] [xscale=0.75,yscale=0.75] {\(F^{\sim}\)};

			\draw (1,293) node [anchor=north west][inner sep=0.75pt] [xscale=0.75,yscale=0.75] {\(X\)};
			\draw (18,176) .. controls (68,200) and (142,152) .. (179,186);
			\draw (3,280) .. controls (53,304) and (127,256) .. (164,290);
			\draw (18,176) -- (3,280);
			\draw (179,186) -- (164,290);
			\draw (177,215.5) node [anchor=north west][inner sep=0.75pt] [xscale=0.75,yscale=0.75] {\(D_{1}\)};
			\draw (15,202.3) .. controls (53,214.7) and (152,201.5) .. (175,218);
			\draw (16.6,189) .. controls (80.6,182.6) and (141,235) .. (167.91,262);
			\draw (172,258) node [anchor=north west][inner sep=0.75pt] [xscale=0.75,yscale=0.75] {\(D_{2}\)};
			\draw (69,182.6) .. controls (88.6,214.6) and (150,225) .. (171,244);
			\draw (175,235) node [anchor=north west][inner sep=0.75pt] [xscale=0.75,yscale=0.75] {\(A\)};
			\draw (105,184) -- (85,276);
			\draw (89,259) node [anchor=north west][inner sep=0.75pt] [xscale=0.75,yscale=0.75] {\(F\)};
			\draw  [fill={rgb, 255:red, 0; green, 0; blue, 0}, fill opacity=1] (98,209) .. controls (98,208) and (98.5,207) .. (99.6,207) .. controls (101,207) and (101.6,208) .. (101.6,209) .. controls (101.6,210) and (101,211) .. (99.6,211) .. controls (98.5,211) and (98,210) .. (98,209) -- cycle;

			\draw (438,276) node [anchor=north west][inner sep=0.75pt] [xscale=0.75,yscale=0.75] {\(X'\)};
			\draw (476,178) .. controls (526,202) and (600.6,154) .. (638,188);
			\draw (461,282) .. controls (511,306) and (585.6,258) .. (623,292);
			\draw (476,178) -- (461,282);
			\draw (638,188) -- (623,292);
			\draw (474,200) .. controls (512,225) and (583,194) .. (634,205);
			\draw (639.6,191) node [anchor=north west][inner sep=0.75pt] [xscale=0.75,yscale=0.75] {\(D_{1}'=D_{1}^{\sim}\)};
			\draw (471,221) .. controls (509,247) and (580,216) .. (631,227);
			\draw (637,217) node [anchor=north west][inner sep=0.75pt] [xscale=0.75,yscale=0.75] {\(A'=A^{\sim}\)};
			\draw (468,242.64) .. controls (506,268) and (577,236.89) .. (628,248);
			\draw (635.5,242.84) node [anchor=north west][inner sep=0.75pt] [xscale=0.75,yscale=0.75] {\(D_{2}'=D_{2}^{\sim}\)};
			\draw [color={rgb, 255:red, 144; green, 19; blue, 254}, draw opacity=1]   (562,191) -- (542,282);
			\draw (541.87,289.84) node [anchor=north west][inner sep=0.75pt] [color={rgb, 255:red, 144; green, 19; blue, 254 }, opacity=1, xscale=0.75, yscale=0.75] {\(F'\)};
			\draw  [fill={rgb, 255:red, 0; green, 0; blue, 0}, fill opacity=1] (556,207) .. controls (556,206) and (557,205) .. (558,205) .. controls (560,205) and (560,206) .. (560,207) .. controls (560,208) and (560,209) .. (558,209) .. controls (557,209) and (556,208) .. (556,207) -- cycle;
			\draw (546.6,186) node [anchor=north west][inner sep=0.75pt] [xscale=0.75,yscale=0.75] {\(p\)};

			\draw (238,367) .. controls (288,391) and (363,343) .. (399.6,377);
			\draw (356,371) node [anchor=north west][inner sep=0.75pt] [xscale=0.75,yscale=0.75] {\(Z\)};
			\draw  [fill={rgb, 255:red, 0; green, 0; blue, 0}, fill opacity=1] (308,370) .. controls (308,369) and (309,368) .. (309.69,368) .. controls (310.8,368) and (312,369) .. (312,370) .. controls (312,371) and (310.8,372) .. (309.69,372) .. controls (309,372) and (308,371) .. (308,370) -- cycle;
			\draw (303,343.54) node [anchor=north west][inner sep=0.75pt] [xscale=0.75,yscale=0.75] {\(q\)};

			\draw (156,23) .. controls (206,47) and (280,-1) .. (317,33);
			\draw (141,127) .. controls (191,151) and (265,103) .. (302,137);
			\draw (156,23) -- (141,127);
			\draw (317,33) -- (302,137);
			\draw (153,49.8) .. controls (191,62.2) and (290.6,49) .. (312.87,66);
			\draw (319,55) node [anchor=north west][inner sep=0.75pt] [xscale=0.75,yscale=0.75] {\(D_{1}^{\sim}\)};
			\draw (149,66.7) .. controls (195,67.8) and (267,95.8) .. (306.64,110);
			\draw (310.8,106) node [anchor=north west][inner sep=0.75pt] [xscale=0.75,yscale=0.75] {\(D_{2}^{\sim}\)};
			\draw (148.2,82) .. controls (185,86.85) and (287.8,79.8) .. (309,91);
			\draw (314.8,83) node [anchor=north west][inner sep=0.75pt] [xscale=0.75,yscale=0.75] {\(A^{\sim}\)};
			\draw [color={rgb, 255:red, 245; green, 166; blue, 35}, draw opacity=1]   (244,31) -- (224,123);
			\draw (245,35) node [anchor=north west][inner sep=0.75pt] [color={rgb, 255:red, 245; green, 166; blue, 35 }, opacity=1, xscale=0.75, yscale=0.75] {\(E\)};
			\draw (220,97) -- (256,117);
			\draw (258,106) node [anchor=north west][inner sep=0.75pt] [xscale=0.75,yscale=0.75] {\(F^{\sim}\)};

			\draw (183,300) -- (231,348);
			\draw [shift={(233,349.64)}, rotate = 225.1] [color={rgb, 255:red, 0; green, 0; blue, 0}][line width=0.75] (11,-3) .. controls (7,-1) and (3,0) .. (0,0) .. controls (3,0) and (7,1) .. (11,3);
			\draw (188.93,324.54) node [anchor=north west][inner sep=0.75pt] [xscale=0.75,yscale=0.75] {\(f\)};
			\draw (502,126.97) -- (515,173);
			\draw [shift={(515.2,175)}, rotate = 254] [color={rgb, 255:red, 0; green, 0; blue, 0}][line width=0.75] (11,-3) .. controls (7,-1) and (3,0) .. (0,0) .. controls (3,0) and (7,1) .. (11,3);
			\draw (516,129) node [anchor=north west][inner sep=0.75pt] [xscale=0.75,yscale=0.75] {blowdown of \(F^{\sim}\) and then \(E^{\sim}\)};
			\draw (212,145) -- (185,175);
			\draw [shift={(183,176.64)}, rotate = 312] [color={rgb, 255:red, 0; green, 0; blue, 0}][line width=0.75] (11,-3) .. controls (7,-1) and (3,0) .. (0,0) .. controls (3,0) and (7,1) .. (11,3);
			\draw (472,301) -- (424,347.57);
			\draw [shift={(422.93,349)}, rotate = 316] [color={rgb, 255:red, 0; green, 0; blue, 0}][line width=0.75] (11,-3) .. controls (7,-1) and (3,0) .. (0,0) .. controls (3,0) and (7,1) .. (11,3);
			\draw (455.6,327.87) node [anchor=north west][inner sep=0.75pt] [xscale=0.75,yscale=0.75] {\(f'\)};
			\draw (375,30) -- (337,38.2);
			\draw [shift={(335,38.64)}, rotate = 347] [color={rgb, 255:red, 0; green, 0; blue, 0}][line width=0.75] (11,-3) .. controls (7,-1) and (3,0) .. (0,0) .. controls (3,0) and (7,1) .. (11,3);

			\draw (553,311) node [anchor=north west][inner sep=0.75pt] [xscale=0.75,yscale=0.75] {$\begin{array}{l}
						F'=G^{\sim} \\
						p=\text{image of } E^{\sim} =F'\cap D_{1}'
					\end{array}$};
		\end{tikzpicture}
	\end{center}
	If \(F\) is a component of \(D_2\) (resp.\ \(A\)),
	then the picture is similar with \(p=D_2'\cap F'\) (resp.\ \(p'=A'\cap F'\)).

	\medskip

	(3)
	We consider the fibre of \(M_Z(2,2)\to \Hilb_Z^2\).
	The fibres over points corresponding to reduced divisors, are \(\mathbb{P}^1\times\mathbb{P}^1\)
	by \cref{exa:reduced_cokernel_divisor}.
	The fibre over a point corresponding to \(2q\) is
	\[
		M_{2q}=\{[\mathcal{E},s]\in M_Z(2,2) \mid \coker(s) \div = 2q\}
	\]
	which has a stratification as in \cref{thm:stable_rank_2_over_curve_irreducible_cokernel},
	\[
		M_{2q}=G_0\cup G_2.
	\]
	Here \(G_0\) is one point corresponding to the case
	when the fibre \(F\) over \(q\) is a component of both \(D_1\) and \(D_2\).
	On the other hand \(G_2\) parametrising the case when \(D_1,D_2\) have no common component.
	By the proof of \cref{thm:stable_rank_2_over_curve_irreducible_cokernel},
	\(G_2\) admits a morphism
	\[
		G_2\longrightarrow \mathbb{P}^1
	\]
	whose fibres are \(\mathbb{A}^1\).
	This is seen from the process of going from \(X\) to \(X'\).
\end{example}

\begin{example}[Degree two on \(\mathbb{P}^1\), algebraic treatment]
	\hfill

	\medskip
	(1)
	Then \(M_Z(2,2)\simeq\Quot(\mathcal{O}_{Z}^2,2)\) and the latter parametrising embeddings
	\[
		\mathcal{K} \subseteq \mathcal{O}_{Z}\oplus \mathcal{O}_{Z}
	\]
	where either
	\[
		\mathcal{K}=\mathcal{O}_{Z}(-1)\oplus \mathcal{O}_{Z}(-1) \quad \text{or} \quad
		\mathcal{K}=\mathcal{O}_{Z}\oplus \mathcal{O}_{Z}(-2).
	\]
	Tensoring with \(\mathcal{O}_{Z}(1)\),
	\[
		\mathcal{K}(1) \subseteq \mathcal{O}_{Z}(1)\oplus \mathcal{O}_{Z}(1).
	\]
	In the first case, \(\mathcal{K}\) can be recovered from
	\begin{equation}
		H^{0}(Z,\mathcal{K}(1)) \subseteq H^{0}(Z,\mathcal{O}_{Z}(1)\oplus \mathcal{O}_{Z}(1))
	\end{equation}
	but not in the second case.
	In any case,
	the later inclusion gives a birational morphism
	\[
		g\colon\Quot(\mathcal{O}_{Z}^2,2)\longrightarrow \Gr(2,4)
	\]
	and \(\Gr(2,4)\subseteq \mathbb{P}^5\) is a quadric hypersurface.
	So we get a diagram
	\begin{equation}\label{eq:resolution}
		\xymatrix{
		& M_Z(2,2) \ar[ld]_g \ar[rd]^{\pi} \\
		\Gr(2,4) \ar@{-->}[rr]_{\text{rational map}} & & \Hilb_Z^2 \simeq \mathbb{P}^2.
		}
	\end{equation}

	Think of points in \(\Gr(2,4)\) as (projective) lines in \(\mathbb{P}(V\otimes S^1)\)
	where \(V=H^0(\mathcal{O}^{\oplus 2})\) and \(S^{\ell}=H^0(\mathbb{P}^1,\mathcal{O}(\ell))\).
	There are three types of lines in \(\mathbb{P}(V\otimes S^1)\).
	\begin{enumerate}[label=(\roman*),leftmargin=*]
		\item \(\mathbb{P}(v\otimes S^1)\) that generates \(v\otimes S^{\ell}\subseteq V\otimes S^{\ell}\) of codimension \(\ell+1\).
		\item \(\mathbb{P}(V\otimes f(x,y))\) for some linear function \(f\neq 0\).
		      By change of coordinates, we may assume \(f(x,y)=x\).
		      In \(V\otimes S^{\ell}\), it generates (a subspace isomorphic to) \(V\otimes S^{\ell-1}\)
		      which has codimension \(2\),
		      where its complement space is generated by \(V\otimes y^{\ell}\).
		\item line intersects Segre quadric in \(2\) points.
		      We may assume it is represented by the subspace spanned by \(v_1\otimes x\) and \(v_2\otimes y\) (as \(2\times 2\) matrices).
		      In \(V\otimes S^{\ell}\), it generates a subspace spanned by \(v_1\otimes x g(x,y)\), \(v_2\otimes x g(x,y)\),
		      which also has codimension \(2\),
		      where its complement space is spanned by \(v_1\otimes y^{\ell}\) and \(v_2\otimes x^{\ell}\).
	\end{enumerate}

	Now the non-injective case is (i),
	i.e., when \(H^0(\mathcal{K}(1))=\langle v\rangle\times S^1\hookrightarrow\Gamma(V\otimes S^1)\simeq V\otimes S^1\).
	This happens when \(H^0(\mathcal{K}|_{k\langle v\rangle})\subseteq H^0(V\otimes\mathcal{O}_Z)=V\).
	That is, \(\mathcal{K}=\mathcal{O}_Z\oplus\mathcal{O}_Z(-2)\).
	The point in \(\Gr(2,4)\) has no information about the embeddings \(\mathcal{O}_Z(-2)\hookrightarrow \mathcal{O}_Z^2\)
	and that can be added in after tensoring \(\mathcal{K}\hookrightarrow\mathcal{O}_Z^2\)
	by \(\mathcal{O}_Z(2)\) and taking global sections.
	That will be a subspace
	\[
		\Big\langle\langle v\rangle\otimes S^2+\langle w\rangle\Big\rangle\subseteq V\otimes S^2,
	\]
	where \(\langle w\rangle\) is the image of \(H^0(\mathcal{O}_Z(-2)\otimes\mathcal{O}_Z(2))\).
	Note that \(\langle w\rangle\) is a well-defined line in \((V/\langle v\rangle)\otimes S^2\),
	i.e., a point of \(\mathbb{P}^2\simeq \mathbb{P}(S^2)\).
	So the special points of \(\Quot(\mathcal{O}_{Z}^2,2)\) form a \(\mathbb{P}(S^2)\)-bundle over \(\mathbb{P}(V)\).
	Indeed, it is isomorphic to \(\mathbb{P}^2\times\mathbb{P}^1\);
	see the calculation below and also \cref{rem:the_edge_case}.
	So the birational morphism \(g\) is blowup of a copy of \(\mathbb{P}^1=\mathbb{P}(V)\)
	\cite[Corollary~4.11]{andreatta1993note}.
	Thus the diagram \eqref{eq:resolution} is the resolution of indeterminacies of the lower rational map.
	It is possible to write down this map explicitly in coordinate.

	To compute the normal bundle of
	\(\mathbb{P}(V)\simeq \mathbb{P}^1\hookrightarrow G\coloneqq\Gr(2,V\otimes S^1)\) explicitly,
	use the Pl\"{u}ker map
	\[
		G\hooklongrightarrow \mathbb{P}^5 = \mathbb{P}(\wedge^2(V\otimes S^1))
	\]
	and \(\wedge^2(V\otimes S^1)=(\wedge^2V\otimes S^2)\oplus(\Sym^2V\otimes\wedge^2 S^1)\).
	On one hand, the Pl\"{u}ker coordinates of \(\mathbb{P}(V)\subseteq G\) is \([0:x_1^2:x_1x_2:x_1x_2:x_2^2:0]\),
	which is not contained in any plane in \(G\),
	where \(x_0,x_1\) are the homogeneous coordinates of \(\mathbb{P}(V)\).
	On the other hand, a subspace \(W\subseteq V\otimes S^1\) has form \(v\otimes S^1\) if and only if the projection
	from \(\wedge^2 W\) to the first factor \(\wedge^2 V\otimes S^2\) is zero.
	The space \(\wedge^2 w\) for \([w]\in G\subseteq \mathbb{P}^5\) is the fibre of determinant of
	the universal subbundle
	which is \(\mathcal{O}_{\mathbb{P}^5}(-1)|_G\eqqcolon \mathcal{O}_{G}(-1)\).
	So \(\mathbb{P}(V)\subseteq G\) is the zero locus of a canonical bundle homomorphism
	\[
		\mathcal{O}_{G}(-1)\longrightarrow\mathcal{O}_{G}\otimes\wedge^2V\otimes S^2.
	\]
	Alternatively it is the zero locus for a section of \(\mathcal{O}_G(1)\otimes\wedge^2V\otimes S^2\)
	and the normal bundle is isomorphic to
	\[
		(\mathcal{O}_G(1)\otimes\wedge^2V\otimes S^2)|_{\mathbb{P}(V)}
	\]
	which is \(\mathcal{O}_{\mathbb{P}(V)}(2)^{\oplus 3}\),
	noting that \(\mathbb{P}(V)\) is a conic in \(\mathbb{P}^5\).

	Since \(g\) is a blowup, the canonical divisor of \(M_Z(2,2)\) is linearly equivalent to
	\(g^*K_G-2E\sim 4g^*H|_G-2E\), where \(E\) is the \(g\)-exceptional divisor,
	\(H\subseteq\mathbb{P}^5\) is a hyperplane.
	Let \(\ell\subseteq G\) be a line that intersects \(\mathbb{P}(V)\) at a point \(q\) (with multiplicity one).
	By the blowup formula \cite[Theorem~6.7]{fulton1998intersection},
	\(g^*[\ell]=[\widetilde{\ell}]+[L]\) in the Chow group \(A_1(M_Z(2,2))\),
	where \(\widetilde{\ell}\) is the proper transform of \(\ell\) and
	\(L\subseteq \mathbb{P}^2\subseteq M_Z(2,2)\) is a line in the fibre of \(g\) over \(q\).
	Since \(\cNE(M_Z(2,2))\) is generated by the classes of \(\widetilde{\ell}\) and \(L\),
	Kleiman's criterion implies that \(4g^{*}H|_G-2E\) is ample and hence \(M_Z(2,2)\) is Fano;
	see also \cite[Theorem~7]{mukai1989biregular}.

	\medskip
	(2)
	We calculate the fibres of \(\pi\).
	We need to parametrise quotients
	\[
		\mathcal{O}_{Z}^2\longrightarrow \mathcal{L}
	\]
	with \(\mathcal{L}\) of rank zero whose divisor if of degree \(2\).

	First consider the case when \(\mathcal{L}\) is supported at distinct points \(q_1,q_2\).
	By \cref{thm:moduli_of_product,rem:reduced_cokernel_divisor_algebraic},
	such quotients are parametrised by \(\mathbb{P}^1\times\mathbb{P}^1\).
	This agrees with the geometric picture above.

	Now consider fibre over \(2q\),
	that is, when \(\mathcal{L}\) is supported at one point \(q\).
	By \cref{rem:reduced_cokernel_divisor_algebraic},
	it is enough to parametrise quotients
	\[
		k[t]/\langle t^2 \rangle\oplus k[t]/\langle t^2 \rangle \longrightarrow L
	\]
	where \(L\) is a \(k[t]/\langle t^2 \rangle\)-module of length \(2\).
	Such \(L\) is either \(k\oplus k\) or \(k[t]/\langle t^2\rangle\).

	First consider \(L=k[t]/\langle t^2\rangle\).
	A quotient is determined by the choice of
	\[
		e=a_1+b_1t, \quad  h=a_2+b_2t \in k[t]/\langle t^2\rangle
	\]
	such that either \(a_1\neq 0\) or \(a_2\neq 0\).
	Then kernel of the quotient is
	\[
		\{(f,g)\in k[t]/\langle t^2\rangle\oplus k[t]/\langle t^2\rangle \mid fe+gh=0\}.
	\]
	If \(a_1\neq 0\), then \(e\) is invertible in \(k[t]/\langle t^2\rangle\),
	so if \((f,g)\) is in the kernel,
	then
	\[
		f=-e^{-1}gh,
	\]
	so
	\[
		(f,g)=(-e^{-1}gh,g)=e^{-1}g(-h,e).
	\]
	If \(a_2\neq 0\), we can see
	\[
		(f,g)=h^{-1}f(h,-e).
	\]
	So in any case the kernel is \(\langle (-h,e)\rangle\),
	i.e., the submodule generated by \((-h,e)\).


	When \(e\) is invertible
	\[
		\langle (-h,e)\rangle = \langle (-e^{-1}h,1)\rangle
	\]
	where
	\[
		-e^{-1}h\eqqcolon m_1+m_2t
	\]
	is uniquely determined.
	Thus such kernels are parametrised by the points of \(\mathbb{A}^1\times\mathbb{A}^1\) with coordinates \(m_1,m_2\).
	Similarly, when \(h\) is invertible,
	the corresponding kernel is \(\langle (1,-h^{-1}e)\rangle\)
	where
	\[
		-h^{-1}e\eqqcolon l_1+l_2t.
	\]
	Such kernels are parametrised by \(\mathbb{A}^1\times\mathbb{A}^1\) with coordinates \(l_1,l_2\).

	When both \(e,h\) are invertible,
	\[
		\langle (m_1+m_2t,1)\rangle = \langle (1,l_1+l_2t)\rangle
	\]
	so
	\[
		(m_1+m_2t)(l_1+l_2t)=1 \text{ in } k[t]/\langle t^2\rangle.
	\]
	This means the two \(\mathbb{A}^1\times\mathbb{A}^1\) are glued
	along \(\mathbb{A}^1\setminus \{0\}\times\mathbb{A}^1\) by the isomorphism
	\begin{align*}
		\mathbb{A}^1\setminus\{0\}\times\mathbb{A}^1 & \longrightarrow \mathbb{A}^1\setminus\{0\}\times\mathbb{A}^1 \\
		(m_1,m_2)                                    & \longmapsto \Big(\frac{1}{m_1},-\frac{m_2}{m_1^2}\Big).
	\end{align*}
	The union of the two copies \(\mathbb{A}^1\times\mathbb{A}^1\) under the above isomorphism
	is a variety mapping to \(\mathbb{P}^1\) and with fibres \(\mathbb{A}^1\).
	This corresponds to the morphism \(G_2\to \mathbb{P}^1\) determined in the geometric discussion above.

	Consider
	\[
		V(xz+y^2) \subseteq \mathbb{P}^3
	\]
	where we consider the coordinates \(x,y,z,u\) on \(\mathbb{P}^3\).
	The points with \(x\neq 0\) are
	\[
		(1:y:-y^2,u)
	\]
	which are parametrised \(\mathbb{A}^1\times\mathbb{A}^1\) with coordinates \(y,u\).
	And the points with \(z\neq 0\) are
	\[
		(-y^2:y:1:u)
	\]
	which are parametrised \(\mathbb{A}^1\times\mathbb{A}^1\) with coordinates \(-y,u\).
	The intersection of the two sets of points is the set of points with \(x\neq 0\), \(z\neq 0\), \(y\neq 0\)
	which consists of points
	\[
		(1:y:-y^2:u) = \Big(-\frac{1}{y^2}:-\frac{1}{y}:1:-\frac{u}{y^2}\Big)
	\]
	corresponding to an isomorphism from \(\mathbb{A}^1\setminus \{0\}\times\mathbb{A}^1\) given by
	\[
		(y,u) \longmapsto \Big(\frac{1}{y}:-\frac{u}{y^2}\Big)
	\]
	with respect to the coordinates above.
	So \(V(xz+y^2)\) consists of the union of two copies of \(\mathbb{A}^1\times\mathbb{A}^1\)
	glued by the above isomorphism together with the singular point \((0:0:0:1)\).

	Now consider all the quotients
	\[
		k[t]/\langle t^2\rangle \oplus k[t]/\langle t^2\rangle \longrightarrow L
	\]
	with \(L=k\oplus k\).
	Such a quotient is given by sending \((1,0),(0,1)\) to \((a_1,b_1),(a_2,b_2)\in k^2\) such that
	\[
		\det
		\begin{bmatrix}
			a_1 & a_2 \\
			b_1 & b_2
		\end{bmatrix}
		\neq 0.
	\]
	The quotient send \(t\) to zero,
	so it factors through a surjection \(k\oplus k\to L\) which should be an isomorphism.
	So the kernel is \(\langle t\rangle\oplus \langle t\rangle\)
	and this is unique.

	Finally by \cref{lem:fibre_normal},
	all the quotients for \(L=k[t]/\langle t^2\rangle\) and \(L=k\oplus k\) are parametrised by a normal surface.
	And the above discussion shows that this normal surface is isomorphic to \(V(xz+y^2)\)
	when we remove one point from each side.
	But since both are normal surfaces, they are isomorphic.

	This completes the proof of \cref{thm:main_m22}.
\end{example}

\section{Degree three stable pairs on curves}

\subsection{Algebraic treatment}\label{sub:alg_treat_degree_3}

Assume \(Z=\mathbb{P}^1\).
We consider sheaf stable pairs \(\mathcal{O}_Z^2\to\mathcal{E}\) of rank \(2\) and degree \(3\).
Since \(\mathcal{E}\) is nef,
either \(\mathcal{E}\simeq\mathcal{O}_Z(1)\oplus\mathcal{O}_Z(2)\),
or \(\mathcal{E}\simeq\mathcal{O}_Z\oplus\mathcal{O}_Z(3)\).

We first focus on the former, which is the generic case.
Let \(\mathcal{M}\) be the moduli space of stable pairs with \(\mathcal{E}=\mathcal{O}_Z(1)\oplus\mathcal{O}_Z(2)\).

Denote \(V\coloneqq H^0(\mathcal{O}_Z(1)\oplus\mathcal{O}_Z(1))\),
\(H\coloneqq H^0(\det\mathcal{E})\simeq H^0(\mathcal{O}_{\mathbb{P}^1}(3))\),
which are both vector spaces of dimension \(4\).
We claim that there is an injective morphism \(\mathcal{M}\to\mathbb{P}(H)\times\mathbb{P}(V)\).

As in \cref{rem:the_edge_case},
a stable map \(s\colon\mathcal{O}_Z^2\to\mathcal{E}=\mathcal{O}_Z(1)\oplus\mathcal{O}_Z(2)\) is given by
\[
	M=
	\begin{pmatrix}
		\alpha_1 & \alpha_2 \\
		\beta_1  & \beta_2
	\end{pmatrix}
	\in\overline{\mathcal{M}}
\]
where \(\alpha_{i}\in H^0(\mathcal{O}_{\mathbb{P}^1}(2))\) and \(\beta_{i}\in H^0(\mathcal{O}_{\mathbb{P}^1}(1))\)
such that its determinant is a non-zero vector in \(H=H^0(\det\mathcal{E})\).
Note that
\[
	\Aut(\mathcal{E})=(\Aut(\mathcal{G}_1)\times\Aut(\mathcal{G}_2))\ltimes\Hom(\mathcal{G}_1,\mathcal{G}_2)
	=(k^{*}\times k^*)\ltimes\Hom(\mathcal{O}_Z(1),\mathcal{O}_Z(2))
\]
acts on \(\overline{\mathcal{M}}\) by matrix multiplication

%
%
%
%


\[
	\begin{pmatrix}
		\gamma_1 & \varphi  \\
		0        & \gamma_2
	\end{pmatrix}
	\begin{pmatrix}
		\alpha_1 & \alpha_2 \\
		\beta_1  & \beta_2
	\end{pmatrix}
	=
	\begin{pmatrix}
		\gamma_1 \alpha_1 + \varphi \beta_1 & \gamma_2 \alpha_2 + \varphi \beta_2 \\
		\gamma_2 \beta_1                    & \gamma_2 \beta_2
	\end{pmatrix}
\]
where \(\gamma_i\in k^*\) and \(\varphi\in\Hom(\mathcal{O}_Z(1),\mathcal{O}_Z(2))\).
We still see that the determinant \(\det M=\alpha_1\beta_2-\alpha_2\beta_1\)
is invariant up to scaling of \(\gamma_1\gamma_2\).
View \([\beta_1:\beta_2]\) as an element of \(V\simeq\mathbb{A}^4\),
which is invariant up to scaling of \(\gamma_2\).
So there is an induced morphism
\[
	\mathcal{M}\longrightarrow \mathbb{P}(H)\times\mathbb{P}(V), \quad M\longmapsto (\det M, [\beta_1:\beta_2]).
\]

Assume that \(\pi(M)=\pi(M')\) for some \(M,M'\in \overline{\mathcal{M}}\) with
\[
	M=
	\begin{pmatrix}
		\alpha_1 & \alpha_2 \\
		\beta_1  & \beta_2
	\end{pmatrix}
	\quad \text{and}\quad
	M'=
	\begin{pmatrix}
		\alpha_1' & \alpha_2' \\
		\beta_1'  & \beta_2'
	\end{pmatrix}
	.
\]
After multiplying by matrices from \(\Aut(\mathcal{E})\)
we may assume that \([\beta_1:\beta_2]=[\beta_1':\beta_2']\)
and \(\det M=\det M'\).
Then we may take \(\gamma_1=\gamma_2=1\).
In the case when \(\beta_1\) and \(\beta_2\) are proportional,
let \(\varphi\in\Hom(\mathcal{O}_Z(1),\mathcal{O}_Z(2))\) be any homomorphism satisfying \(\varphi\beta_1=\alpha_1'-\alpha_1\);
in the case when \(\beta_1\) and \(\beta_2\) are not proportional,
let \(\varphi=0\) be given by \(\varphi\beta_j=\alpha_j'-\alpha_j\) for \(j=1,2\).
Then
\[
	\begin{pmatrix}
		1 & \varphi \\
		0 & 1
	\end{pmatrix}
	M=M'
\]
and hence \(\pi\) is injective.

\medskip

Now we turn to the general setting.
Note that \(M_Z(2,3)\simeq\Quot(\mathcal{O}_{Z}^2,3)\) and the latter parametrising embeddings
\[
	\mathcal{K}\hooklongrightarrow \mathcal{O}_Z\oplus\mathcal{O}_Z
\]
where either \(\mathcal{K}=\mathcal{O}_Z(-1)\oplus\mathcal{O}_Z(-2)\) or \(\mathcal{K}=\mathcal{O}_Z\oplus \mathcal{O}_Z(-3)\).
In both cases,
\[
	H^0(\mathcal{K}(2))\hooklongrightarrow H^0(\mathcal{O}_Z(2)\oplus\mathcal{O}_Z(2)) = V\otimes S^2
\]
is a \(3\)-dimensional subspace in \(6\)-dimensional vector space,
where \(V=H^0(\mathcal{O}_Z\oplus\mathcal{O}_Z)\) and \(S^{\ell}=H^0(\mathcal{O}_Z(\ell))\).
But \(\mathcal{K}\) is globally generated only in the first (generic) case.

Assume that \(\oplus_{\ell\geq 0}S^{\ell}\simeq k[x,y]\) as graded modules.
For a \(3\)-dimensional subspace \(W\hookrightarrow V\otimes S^2\) there is a linear map
\[
	W\oplus W\longrightarrow V\otimes S^3, \quad (w_1,w_2)\longmapsto w_1x+w_2y.
\]
The image is expected to be \(H^0(\mathcal{K}(3))\) and hence of codimension \(3\)
in the \(8\)-dimensional vector space \(V\otimes S^3\).
On the Grassmannian \(G=\Gr(3,6)\) we have a universal subbundle \(\mathcal{S}\hookrightarrow\mathcal{O}_{G}\otimes V\otimes S^2\) of rank \(3\),
with a morphism \(\sigma\colon\mathcal{S}\oplus\mathcal{S}\to \mathcal{O}_{G}\otimes V\otimes S^3\)
for which we like the image of \(\sigma\) to have rank \(5\).
By general theory in \cite[Chapter~14]{fulton1998intersection},
the \emph{expected codimension} of the \emph{degeneracy locus}
\[
	D_5(\sigma)=\{x\in G\mid \rk\sigma(x)\leq 5\}
\]
in \(G\) is \(3\).
By \cite[\S~4.1]{banica1991smooth}, the degeneracy locus \(D_5(\sigma)\) has singularities along \(D_4(\sigma)\).
This corresponds to the case when \(\mathcal{K}=\mathcal{O}_Z\oplus\mathcal{O}_Z(-3)\).
In this case \(H^0(\mathcal{K}(2))=v\otimes S^2\) for some vector \(v\in V\),
which spans \(H^0(\mathcal{K})\hookrightarrow H^0(\mathcal{O}_Z\otimes V)\).
The image of \(W\oplus W\to V\otimes S^3\) for \(W=v\otimes S^2\) is \(v\otimes S^3\)
which has dimension \(4\).
To get all of \(5\)-dimensional \(H^0(\mathcal{K}(3))\hookrightarrow H^0(V\otimes\mathcal{O}(3))=V\otimes S^3\)
we need to specify an additional line in
\[
	V\otimes S^3 / v\otimes S^3 \subseteq (V/\langle v\rangle)\otimes S^3.
\]
This line is \(H^0(\mathcal{O}_Z(-3)\otimes\mathcal{O}_Z(3))\hookrightarrow H^0(\mathcal{K}(3))\),
so if \(\Quot(\mathcal{O}_{Z}^2,3)=\Quot_{1,2}\cup\Quot_{0,3}\)
then the map \(\pi\colon\Quot(\mathcal{O}_{Z}^2,3)\to G\) is injective on \(\Quot_{1,2}\) and
collapses \(\Quot_{0,3}\simeq\mathbb{P}^2\times\mathbb{P}^1\) onto \(\mathbb{P}^1=\mathbb{P}(V)\)
which is the singular locus of \(\pi(\Quot(\mathcal{O}_{Z}^2,3))\subseteq\Gr(3,6)\).

If we consider \(W\subseteq V\otimes S^2\) and want \(\dim xW+yW=5\) for \(xW+yW\subseteq V\otimes S^3\),
then this means that there exist \(w_1,w_2\in W\) such that \(w_1x=w_2y\).
As \(W\subseteq V\otimes S^2\),
there exists \(v\in V\otimes S^1\) such that \(w_1=vy\) and \(w_2=vx\).
This \(v\in V\otimes S^1\) is unique up to rescaling
since \(\dim xW \cap yW=1\) (as subspaces in \(V\otimes S^3\)).
Hence, on an open subset of \(\Quot(\mathcal{O}_{Z}^2,3)\),
we take \(v\in V\otimes S^1\) then construct \(\langle vx,vy\rangle\) in \(V\otimes S^2\),
which are always independent vectors,
and complete this to a \(3\)-dimensional subspace \(W\) by choosing a line
\[
	\langle w\rangle\subseteq V\otimes S^2 / \langle vx,vy\rangle.
\]
This is an open subset in a \(\mathbb{P}^3\)-bundle over \(\mathbb{P}^3\):
although each \(\langle v\rangle\), \(\langle w\rangle\) corresponds a point of \(\mathbb{P}^3\),
occasionally we will end up with \(\dim xW\cap yW\geq 2\) in \(V\otimes S^3\).

Suppose \(\dim xW\cap yW\geq 2\).
Then there exist \(v_1,v_2\in V\otimes S^1\) such that \(v_1x,v_1y,v_2x,v_2y\in W\).
But \(W\) is \(3\)-dimensional,
so \(\Span \langle v_1x,v_1y\rangle \cap \Span \langle v_2x,v_2y\rangle\neq 0\).
This means that there exists \(u\in V\) such that \(uS^2 \subseteq W\)
and thus \(uS^2=W\).
Such subspaces \(W\) are parametrised by locus \(\langle u\rangle\in\mathbb{P}(V)=\mathbb{P}^1\) and
their \(\pi\)-preimage in \(\Quot(\mathcal{O}_{Z}^2,3)\) is isomorphic to \(\mathbb{P}^2\times\mathbb{P}^1\).
Since \(\pi^{-1}(\mathbb{P}(V))\) is not a divisor on \(\Quot(\mathcal{O}_{Z}^2,3)\),
this \(\mathbb{P}(V)=\mathbb{P}^1\subseteq G=\Gr(3,6)\) is singular in the \(\pi\)-image of \(\Quot(\mathcal{O}_{Z}^2,3)\).

\subsection{Description the fibre of Hilbert-Chow morphism via equations}






Let \(Z=\mathbb{P}^1\).
Denote by \(F\) be the fibre of \(\pi\colon M_Z(2,3)\to\Hilb_Z^3\) over \(3q\) for some closed point \(q\in Z\).
We may assume that \(q\) is the origin.


Let \(t\) be the local coordinate of the origin in \(\mathbb{A}^1\subseteq\mathbb{P}^1\).
Then all possible embeddings \(\mathcal{K}\hookrightarrow \mathcal{O}_Z^2\) with \(\det\mathcal{K}^{\vee}=\mathcal{O}_Z(3q)\) are in bijective correspondence with \(3\)-dimensional subspaces in
\[
	(k[t]/\langle t^3\rangle)^{\oplus 2}\coloneqq M
\]
which are \(t\)-invariant;
see Section 6.4.
There are two types of such submodules \(N\subseteq M\) with \(\dim N=3\):

\begin{enumerate}[leftmargin=*]
	\item \(N=k[t]/\langle t^3\rangle\) as a \(k[t]\) module.
	      This is an open subset of \(F\) (of dimension \(3\)) corresponds to the case when \(N\) projects onto a line in \(M/tM\).
	\item \(N\simeq k[t]/\langle t^2\rangle\oplus k[t]/\langle t\rangle\) corresponds to the case when
	      \(t^2M\subsetneq N\subsetneq M\).
	      This is the singular set,
	      isomorphic to \(\mathbb{P}^1\), which is the set of lines in \(tM/t^2M\).
\end{enumerate}

Local equations can be written in local coordinates on the Grassmannian \(\Gr(3,M)\).
If \(W\subseteq M\) is a \(3\)-dimensional subspace and one chooses a splitting \(M\simeq W\oplus M/W\),
then these subspaces in the neighbourhood of \(W\) correspond to graphs of linear maps \(W\to M/W\).
For example, \(W\) itself corresponds to zero map.

Write \(M=\Span(u,ut,ut^2,v,vt,vt^2)\) and choose \(W=\Span(ut,ut^2,vt^2)\).
This corresponds to the Case (2) above and they are all isomorphic.
Then \(M/W\simeq \Span(u,v,vt)\).
If we order the basis as \((ut,ut^2,vt^2,u,v,vt)\) then subspaces near \(W\) are spanned by rows of
\[
	\begin{pmatrix}
		1 & 0 & 0 & a & b & c \\
		0 & 1 & 0 & d & e & f \\
		0 & 0 & 1 & g & h & i
	\end{pmatrix}
\]
where the right half of the matrix, i.e.,
\[
	\begin{pmatrix}
		a & b & c \\
		d & e & f \\
		g & h & i
	\end{pmatrix}
\]
is the linear map \(W\to M/W\).
By assumption, this subspace is \(t\)-invariant.
Denote the \(j\)-th row by \(R_j\), \(1\leq j\leq 4\).
Since \(t(ut+au+bv+cvt)=ut^2+aut+bvt+cvt^2\),
we have \(tR_1=(a,1,c,0,0,b)\) and hence \(tR_1=aR_1+R_2+cR_3\).
This gives equations
\[
	\begin{cases}
		-a^2-d-gc=0, \\
		-ab-e-hc=0,  \\
		b-ac-f-ci=0.
	\end{cases}
\]
%
Repeat this process, we have the full set of equations on \(9\) variables:
\begin{empheq}[left=\empheqlbrace]{alignat=2}
	& -a^2-d-gc=0, \label{eq:9-1} \\
	& -ab-e-ch=0, \nonumber \\
	& b-ac-f-ci=0, \label{eq:9-3} \\
	& -ad-fg=0, \nonumber \\
	& -bd-fh=0, \nonumber \\
	& e-cd-fi=0, \nonumber \\
	& -ag-gi=0, \label{eq:9-7} \\
	& -bg-hi=0, \label{eq:9-8} \\
	& h-cg-i^2=0. \label{eq:9-9}
\end{empheq}

In \cref{eq:9-7} we have \(g(a+i)=0\).
If \(g=0\) then \cref{eq:9-8} gives \(hi=0\) and from \cref{eq:9-9} we obtain \(h=i^2\).
So both \(h,i\) are also zero.
Remaining equations then give:
\begin{empheq}[left=\empheqlbrace]{alignat=2}
	& -a^2-d=0, \label{eq:6-1} \\
	& -ab-e=0, \nonumber \\
	& b-ac-f=0, \nonumber \\
	& -ad=0, \label{eq:6-4} \\
	& -bd=0, \nonumber \\
	& e-cd=0. \nonumber
\end{empheq}
From \cref{eq:6-1,eq:6-4} we have \(a=d=0\).
Then \(e=0\), \(b=f\) and \(c\) is a free variable.
This gives
\begin{equation}\label{eq:two_dim_matrix_space}
	\begin{pmatrix}
		1 & 0 & 0 & 0 & b & c \\
		0 & 1 & 0 & 0 & 0 & b \\
		0 & 0 & 1 & 0 & 0 & 0 \\
	\end{pmatrix}
\end{equation}
which is \(2\)-dimensional.

If \(g\neq 0\) in \cref{eq:9-7},
then \(i=-a\) from \cref{eq:9-7},
\(b=f\) from \cref{eq:9-3} and
\(h=-d\) from \cref{eq:9-1,eq:9-9}.
So we are left with variables \(a,b,c,d,e,g\) and equations
\[
	\begin{cases}
		-a^2-d-gc = 0, \\
		-ab-e+dc = 0,  \\
		-ad - bg = 0.
	\end{cases}
\]
Note that \(d,e\) can be eliminated from the first two equations
while the last one gives
\[
	-a(-a^2-gc)-bg=0 \quad \Rightarrow \quad a^3=g(b-ac)
\]
in \(\mathbb{A}^4\) with coordinates \(a,b,c,g\).
Finally collect together
\[
	\begin{cases}
		b=f, h=-d, i=-a, \\
		d = -a^2 - gc,   \\
		e = dc - ab,
	\end{cases}
\]
which gives matrix
\[
	\begin{pmatrix}
		1 & 0 & 0 & a       & b     & c  \\
		0 & 1 & 0 & -a^2-gc & dc-ab & b  \\
		0 & 0 & 1 & g       & -d    & -a
	\end{pmatrix}
\]
with relation \(a^3+agc=bg\).
In particular, we can take \(a=g=d=0\) and that recovers \cref{eq:two_dim_matrix_space} which we considered above.

Therefore, locally around a single point, the equation of \(F\) can be written as \(a^3=g(b-ac)\) in \(a,b,c,g\) coordinates
which is a \(cA_2\)-singularity.
In particular, \(F\) has canonical singularities.

\medskip
The calculation above can be generalised to higher dimension.

\begin{proposition}
	Let \(Q_n\) be the variety of \(n\)-dimensional submodules in \(M_n=(k[t]/\langle t^n\rangle)^{\oplus 2}\).
	\begin{enumerate}[leftmargin=*]
		\item \(\dim Q_n=n\);
		\item the singular locus \(Q_n^{\Sing}\simeq Q_{n-2}\) when \(n\geq 2\);
		\item the smooth locus \(Q_n^{\circ}\coloneqq Q_n\setminus Q_n^{\Sing}\) is an \(\mathbb{A}^{n-1}\)-bundle over \(\mathbb{P}^1\);
		\item if \(x\in Q_{n-2}^{\circ}\subseteq Q_{n-2}\subseteq Q_n\),
		      then in some local coordinates around \(x\),
		      \(Q_n\) is isomorphic to \(\mathbb{A}^{n-2}\times (\text{singular surface } xy=z^n)\),
		      which is a higher dimensional \(cA_{n-1}\)-singularity.
	\end{enumerate}
\end{proposition}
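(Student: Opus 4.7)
The plan makes essential use of the stratification $Q_n = \Quot(t^n,0)\cup \Quot(t^n,1)\cup\dots\cup \Quot(t^n,\lfloor n/2\rfloor)$ established in the previous subsection, where $\Quot(t^n, m)$ parametrises submodules of type $k[t]/\langle t^m\rangle \oplus k[t]/\langle t^{n-m}\rangle$ and is an $\mathbb{A}^{n-2m-1}$-bundle over $\mathbb{P}^1$ of dimension $n-2m$. I would prove (3) first, deduce (1) from it, then establish (2), and finally (4).

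For (3), the open stratum $Q_n^\circ = \Quot(t^n, 0)$ parametrises cyclic submodules $N = k[t]\cdot v$ with $v \in M_n$ having non-zero image in $M_n/tM_n$; two generators determine the same $N$ iff they differ by a unit of $k[t]/\langle t^n\rangle$. The assignment $N \mapsto [v \bmod tM_n] \in \mathbb{P}^1$ gives a natural morphism. Over the affine chart where the leading term is $e_1$, one normalises $v = e_1 + \sum_{i\geq 1}(a_i t^i e_1 + b_i t^i e_2)$, and the $(n-1)$-dimensional unit group $1 + t\cdot k[t]/\langle t^n\rangle$ kills the $a_i$'s exactly, leaving $(b_1, \ldots, b_{n-1}) \in \mathbb{A}^{n-1}$ as the fibre. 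Part (1) follows: $\dim Q_n^\circ = n$, all other strata have strictly smaller dimension, and $Q_n$ is irreducible by \cref{lem:fibre_normal} together with \cref{rem:mq_smooth_cod_one}.

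For (2), the preceding description shows that every non-cyclic $N \in Q_n^{\Sing}$ satisfies $t^{n-1}M_n \subseteq N \subseteq tM_n$. The $k[t]$-isomorphism $tM_n \xrightarrow{\sim} M_{n-1}$ sending $te_i$ to the basis vector $e_i'$ of $M_{n-1}$ carries $t^{n-1}M_n$ onto $t^{n-2}M_{n-1}$, whence $tM_n/t^{n-1}M_n \simeq M_{n-2}$. I would define $\Phi\colon Q_n^{\Sing} \to Q_{n-2}$ by sending $N$ to its image in $M_{n-2}$ under this composition, which is an $(n-2)$-dimensional $t$-invariant subspace. The inverse $\Psi$ sends $\widetilde N \in Q_{n-2}$ to its preimage in $tM_n\subseteq M_n$; the resulting subspace is $n$-dimensional, $t$-invariant, and necessarily non-cyclic since any $w\in tM_n$ satisfies $t^{n-1}w = 0$. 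Both $\Phi$ and $\Psi$ are regular morphisms of varieties and mutually inverse, yielding the isomorphism $Q_n^{\Sing} \simeq Q_{n-2}$.

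For (4), pick $x \in Q_{n-2}^\circ$; by adjusting the basis of $M_n$ we may assume $x$ corresponds under the bijection of (2) to
\[
    N_x = \langle te_1,\, t^2 e_1,\, \dots,\, t^{n-1}e_1,\, t^{n-1}e_2 \rangle.
\]
I would take the complement $N_x' = \langle e_1, e_2, te_2, \ldots, t^{n-2}e_2\rangle$ and parametrise nearby subspaces of $\Gr(n, M_n)$ as graphs of linear maps $N_x \to N_x'$, then impose $t$-invariance row by row. The $n=3$ calculation above serves as a template: nine variables collapse to the relation $a^3 = g(b-ac)$ in four variables, which becomes $\mathbb{A}^1 \times \{a^3 = gb'\}$ after the substitution $b' = b - ac$. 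For general $n$ I expect the analogous book-keeping to yield $n-2$ free coordinates (corresponding to tangent directions along $Q_{n-2}^\circ$) together with a single transverse relation $a^n = gb'$ in three variables, which defines the Du Val $A_{n-1}$-surface $\{xy = z^n\}$. The main obstacle is organising this invariance computation for arbitrary $n$ so as to cleanly isolate the one nontrivial relation; a conceptually cleaner alternative is to construct an étale-local transverse slice to $Q_{n-2}^\circ$ in $Q_n$ and identify it directly with $\{xy = z^n\}$, which would give (4) immediately once (3) is in hand.
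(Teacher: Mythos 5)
The paper itself gives no proof of this proposition: it is stated immediately after the explicit $n=3$ computation with only the remark that ``the calculation above can be generalised to higher dimension'', so your plan is being measured against an intention rather than an argument. Your treatment of (1) and (3) is correct and is exactly the stratification analysis already carried out in the paper's ``variety of invariant subspaces'' discussion ($\Quot(t^n,m)$ is an $\mathbb{A}^{n-2m-1}$-bundle over $\mathbb{P}^1$, the $m=0$ stratum is open of dimension $n$). Your part (2) is actually more explicit than anything in the paper and the construction is sound: for a non-cyclic $N$ the socle of $N$ must equal $\operatorname{soc}M_n=t^{n-1}M_n$ and every element of $N$ is killed by $t^{n-1}$, so $t^{n-1}M_n\subseteq N\subseteq tM_n$, and $N\mapsto N/t^{n-1}M_n\subseteq tM_n/t^{n-1}M_n\simeq M_{n-2}$ is inverse to taking preimages. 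This is a genuinely useful addition.

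There are, however, two gaps. First, in (2) you silently identify $Q_n^{\Sing}$ with the non-cyclic locus. The smoothness of the open stratum only gives the inclusion $Q_n^{\Sing}\subseteq\{\text{non-cyclic }N\}$; you still must show every non-cyclic $N$ \emph{is} a singular point of $Q_n$. Since $Q_n$ is irreducible of dimension $n$ and the non-cyclic locus is a closed subset of dimension $n-2$, nothing formal forces these points to be singular --- you need either the local equations of (4) (which exhibit the $xy=z^n$ transverse singularity along the image of $Q_{n-2}^{\circ}$, and then closedness of the singular locus plus density of $Q_{n-2}^{\circ}$ in $Q_{n-2}$ handles the deeper strata), or a direct tangent-space computation; note that the naive count $\Hom_{k[t]}(N,M_n/N)$ computes the tangent space of the possibly non-reduced $t$-fixed-point scheme in the Grassmannian, so some care is needed there. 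Either way, your proof of (2) cannot precede (4) as written. Second, (4) --- the heart of the proposition, and the input needed for the canonical-singularity claim in \cref{thm:main_m23} --- is not actually carried out: you state what you ``expect'' the general bookkeeping to yield and name the obstacle. Setting up the graph coordinates on $\Gr(n,M_n)$ at the base point $N_x=\langle te_1,\dots,t^{n-1}e_1,t^{n-1}e_2\rangle$ and imposing $t$-invariance row by row is the right template, but until the elimination is performed for general $n$ and shown to leave exactly one relation of the form $a^n=gb'$ after a coordinate change, part (4) (and hence the full statement of (2)) remains unproved --- which, to be fair, is also the state the paper leaves it in.
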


\medskip

Now we can give a proof of \cref{thm:main_m23}.

\begin{proof}[Proof of \cref{thm:main_m23}]
	By \cref{thm:moduli_of_product} we can assume \(Z=\mathbb{P}^1\).
	And by \cref{rem:reduced_cokernel_divisor_algebraic}
	it is enough to consider the fibre \(F\) of \(\pi\) over \(3q\).

	As in the previous section,
	it is enough to parametrise all quotients
	\[
		k[t]/\langle t^3\rangle \oplus k[t]/\langle t^3\rangle \longrightarrow L
	\]
	where \(L\) is a \(k[t]/\langle t^3\rangle\)-module with length \(3\).
	The only possibilities for \(L\) are
	\[
		\begin{cases}
			k[t]/\langle t^3\rangle \\
			k[t]/\langle t^2\rangle \oplus k.
		\end{cases}
	\]
	It is enough to consider the case
	\[
		L=k[t]/\langle t^3\rangle
	\]
	as this is the generic case by \cref{rem:mq_smooth_cod_one}.
	This corresponds to the case
	when the divisors of \(s_1,s_2\) have no common component,
	that is, \(3=n=m\) as in \cref{thm:stable_rank_2_over_curve_irreducible_cokernel}.
	Each quotient is determined by
	\[
		\begin{array}{l}
			(1,0) \longmapsto e \\
			(0,1) \longmapsto h
		\end{array}
		\quad (e \text{ or } h \text{ is invertible})
	\]
	and then the kernel of the quotient is the submodule \(\langle (-h,e)\rangle\).

	The quotients with \(e\) invertible are parametrised by \(\mathbb{A}^3\) via
	\begin{align*}
		\mathbb{A}^3  & \longrightarrow U \subseteq F                   \\
		(m_1,m_2,m_3) & \longmapsto \langle (m_1+m_2t+m_3t^2,1)\rangle.
	\end{align*}
	Similarly, the quotients with \(h\) invertible are parametrised by \(\mathbb{A}^3\) via
	\begin{align*}
		\mathbb{A}^3  & \longrightarrow V \subseteq F                   \\
		(l_1,l_2,l_3) & \longmapsto \langle (1,l_1+l_2t+l_3t^2)\rangle.
	\end{align*}
	When both \(e,h\) are invertible we have
	\begin{align*}
		 & \phantom{\;=\;} \langle (m_1+m_2t+m_3t^2,1)\rangle = \langle (1,l_1+l_2t+l_3t^2)\rangle             \\
		 & = \Big\langle\Big(1,\frac{1}{m_1}-\frac{m_2}{m_1^2}t+\frac{m_2^2-m_1m_3}{m_1^3}t^2\Big) \Big\rangle
	\end{align*}
	inducing a birational map
	\begin{align*}
		U             & \dashrightarrow V                                                                \\
		(m_1,m_2,m_3) & \longmapsto \Big(\frac{1}{m_1},\frac{m_2}{m_1^2},\frac{m_2^2-m_1m_3}{m_1^3}\Big)
	\end{align*}
	which is an isomorphism on points with non-zero first coordinates.

	The above birational map induces a birational map
	\begin{align*}
		\varphi\colon X=\mathbb{P}^3 & \dashrightarrow \mathbb{P}^3=Y                                                 \\
		(m_1:m_2:m_3:m_4)            & \longmapsto (m_1^2m_4:-m_1m_2m_4:m_2^2m_4-m_1m_3m_4:m_1^3) = (l_1:l_2:l_3:l_4)
	\end{align*}
	where we identify \(U\) with points with \(m_4\neq 0\) and \(V\) with \(l_4\neq 0\).

	Define the hyperplanes
	\[
		A_{i}: \quad m_{i}=0 \qquad \text{and} \qquad H_{i}: \quad l_{i}=0.
	\]
	By construction, \(\varphi\) gives an isomorphism
	\[
		X\setminus (A_1\cup A_4) \longrightarrow Y\setminus (H_1 \cup H_4)
	\]
	and one has \(\varphi^2=\id\).
	Also we can see that
	\[
		\varphi \text{ contracts } A_1,A_4 \quad \text{and} \quad
		\varphi^{-1} \text{ contracts } H_1,H_4
	\]
	and no other divisors are contracted.
	Pick the canonical divisor \(K_Y\) such that
	\[
		K_Y+H_1+H_2+H_3+H_4=0 \quad (=0 \text{ not just} \sim 0).
	\]
	We want to compute
	\[
		\varphi^{*}(K_Y+\sum_1^4 H_{i})
	\]
	in terms of \(K_X\) and \(A_{i}\).
	First note that
	\[
		\begin{array}{l}
			\varphi^{*}H_1 = 2A_1 + A_4,      \\
			\varphi^{*}H_2 = A_1 + A_2 + A_4, \\
			\varphi^{*}H_3 = A_4 + G,         \\
			\varphi^{*}H_4 = 3A_1,
		\end{array}
	\]
	where \(G\subseteq X\) is given by \(m_2^2-m_1m_3=0\).
	Moreover,
	\[
		\varphi^{*}K_Y = K_X + a A_1 + b A_4
	\]
	where \(K_X\) is determined as a Weil divisor by \(K_Y,\varphi\), and \(a,b\in \mathbb{Z}\).
	So
	\begin{align*}
		0 = \varphi^{*}(K_Y+\sum_1^4 H_{i}) & = K_X+a A_1+b A_4 + 2A_1+A_4 + A_1+A_2+A_4 + A_4+G + 3A_1 \\
		                                    & = K_X + (a+6)A_1 + A_2 + (b+3) A_4 + G.
	\end{align*}
	Since \(\deg G=2\) and \(\deg K_X=-4\),
	\[
		-4 + (a+6) + 1 + (b+3) +2 = 0
	\]
	hence
	\[
		(a+6) + (b+3) = 1.
	\]
	On the other hand,
	\((Y,\sum_1^4 H_{i})\) has lc singularities,
	so
	\[
		a+6\leq 1 \quad \text{and} \quad b+3\leq 1.
	\]
	Thus one of \(a+6\) and \(b+3\) is zero and the other is one.

	We claim that \(a+6=0\).
	Assume not.
	Then
	\[
		(X,A_1+A_2+G)
	\]
	is lc.
	But then
	\[
		(A_1,A_2|_{A_1}+G|_{A_1})
	\]
	is lc \cite[Theorem~5.50]{kollar1998birational} which is not the case,
	so we have \(a+6=0\) and \(b+3=1\) as claimed
	(note \(G|_{A_1}\) is a double line).
	Summarising, we have
	\begin{equation}\label{eq:pullback_K_Y}
		\varphi^{*}(K_Y+\sum_1^4 H_{i}) = K_X + A_2 + A_4 + G.
	\end{equation}
	Since the construction are symmetric for \(X,Y\),
	we also have
	\[
		(\varphi^{-1})^{*}(K_X+\sum_1^4 A_{i}) = K_Y + H_2 + H_4 + P
	\]
	for some appropriate choices of \(K_X,K_Y\) where \(P\) is a hypersurface of degree two.
	Let \(p\colon W'\to X\) and \(q\colon W'\to \) be a common resolution of \(X\) and \(Y\).
	Then we have
	\[
		p^{*}(K_X+\frac{1}{2}A_1+A_2+A_3+\frac{1}{2}A_4+\frac{1}{2}G) = q^{*}(K_Y+\frac{1}{2}H_1+H_2+H_3+\frac{1}{2}H_4+\frac{1}{2}P).
	\]
	Therefore, the log discrepancy \(a(A_1,Y,\frac{1}{2}H_1+H_2+H_3+\frac{1}{2}H_4+\frac{1}{2}P)=\frac{1}{2}\) and
	hence we can extract \(A_1\) from \(Y\) by an extremal birational contraction \(\rho\colon W\to Y\)
	where \(W\) is of Fano type \cite[Lemma~4.6]{birkar2016effectivity}.
	Then by \eqref{eq:pullback_K_Y} we have \(a(A_1,Y,\sum_1^4 H_i)=1\) and thus
	\[
		\rho^{*}(K_Y+\sum_1^4 H_{i}) = K_W + \sum_1^4 H_{i}^{\sim}
	\]
	where \(\sim\) denotes birational transform.
	Moreover,
	\[
		\begin{array}{l}
			\rho^{*}H_1 = H_1^{\sim} + 2A_1^{\sim}, \\
			\rho^{*}H_2 = H_2^{\sim} + A_1^{\sim},  \\
			\rho^{*}H_3 = H_3^{\sim},               \\
			\rho^{*}H_4 = H_4^{\sim} + 3A_1^{\sim}.
		\end{array}
		\qquad (A_2^{\sim}=H_2^{\sim} \text{ and } G^{\sim}=H_3^{\sim})
	\]

	\begin{claim}\label{clm:eff_w}
		The cone of effective divisors of \(W\) is generated by \(H_4^{\sim}\) and \(A_1^{\sim}\).
	\end{claim}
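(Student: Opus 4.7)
My plan is to exploit that $\rho\colon W\to Y=\mathbb{P}^{3}$ is an extremal birational contraction extracting the single prime divisor $A_{1}^{\sim}$, so $\Pic(W)=\rho^{*}\Pic(Y)\oplus\mathbb{Z}[A_{1}^{\sim}]$ has rank two; since $W$ is of Fano type (hence a Mori dream space), the effective cone $\overline{\mathrm{Eff}}(W)$ is a rational polyhedral cone of dimension two and is generated by its two extremal rays. I will identify these with $[A_{1}^{\sim}]$ and $[H_{4}^{\sim}]$.

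The class $[A_{1}^{\sim}]$ is extremal because $A_{1}^{\sim}$ is $\rho$-exceptional: a general curve $C$ in a fibre of $\rho|_{A_{1}^{\sim}}$ has $C\cdot A_{1}^{\sim}<0$, so $A_{1}^{\sim}$ is the unique effective representative of its class. For the second ray I will show $H_{4}^{\sim}$ is rigid. Let $D$ be any effective divisor on $W$ with $D\sim H_{4}^{\sim}$. Since $A_{1}^{\sim}$ is the only $\rho$-exceptional prime divisor, $\rho_{*}D$ is an effective divisor on $Y$ of hyperplane class, hence equal to a single hyperplane $H\subseteq Y$; writing $D=\rho^{*}H-eA_{1}^{\sim}$ for some integer $e$ and comparing with $H_{4}^{\sim}\sim\rho^{*}H_{4}-3A_{1}^{\sim}$ in $\Pic(W)$ forces $e=3$, whereas effectivity of $D$ demands $e\leq\ord_{A_{1}^{\sim}}(H)$, so $\ord_{A_{1}^{\sim}}(H)\geq 3$.

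From the pullback data $\rho^{*}H_{i}=H_{i}^{\sim}+c_{i}A_{1}^{\sim}$ with $(c_{1},c_{2},c_{3},c_{4})=(2,1,0,3)$, we read off $\ord_{A_{1}^{\sim}}(l_{i})=c_{i}$, and since these four integers are pairwise distinct the ultrametric inequality for $\ord_{A_{1}^{\sim}}$ becomes an equality, yielding $\ord_{A_{1}^{\sim}}(\sum a_{i}l_{i})=\min\{c_{i}:a_{i}\neq 0\}$. This minimum is $\geq 3$ only when $a_{1}=a_{2}=a_{3}=0$, so $H=H_{4}$ up to scalar and $D=H_{4}^{\sim}$, showing $H_{4}^{\sim}$ is rigid. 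The rays $[A_{1}^{\sim}]$ and $[H_{4}^{\sim}]$ are distinct since $\rho_{*}A_{1}^{\sim}=0$ but $\rho_{*}H_{4}^{\sim}=H_{4}\neq 0$, so they span the two-dimensional cone $\overline{\mathrm{Eff}}(W)$. The principal technical point is the equality in the ultrametric inequality, which rests entirely on the distinctness of the four coefficients $c_{i}$ already recorded in the pullback data; beyond that, I do not anticipate significant obstacles.
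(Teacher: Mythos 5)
Your route is genuinely different from the paper's: you pin down the two extremal rays of the (two\nobreakdash-dimensional, rational polyhedral) effective cone of the Mori dream space \(W\), whereas the paper takes an arbitrary effective Weil divisor \(D\), writes \(D\sim \beta H_4^{\sim}+(3\beta-\alpha)A_1^{\sim}\) with \(\beta=\deg\rho_{*}D\geq 0\), and shows \(3\beta-\alpha\geq 0\) by pulling back along \(X\dashrightarrow W\) and using pseudo-effectivity on \(X=\mathbb{P}^3\). Your identification of \(\mathbb{R}_{\geq 0}[A_1^{\sim}]\) as an extremal ray is fine, and your degree-one computation — that the only effective divisor linearly equivalent to \(H_4^{\sim}\) is \(H_4^{\sim}\) itself, via \(\ord_{A_1^{\sim}}(\sum a_il_i)=\min\{c_i: a_i\neq 0\}\) — is correct.

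There is, however, a gap at the final step: rigidity of the linear system \(\abs{H_4^{\sim}}\) does not imply that \([H_4^{\sim}]\) spans an extremal ray of the effective cone. If \([H_4^{\sim}]\) lay in the interior of the cone, then \(NH_4^{\sim}-N\epsilon A_1^{\sim}\) would be effective for some rational \(\epsilon>0\) and \(N\gg 0\), producing a member of \(\abs{NH_4^{\sim}}\) different from \(NH_4^{\sim}\); this contradicts rigidity of \(\abs{NH_4^{\sim}}\) for large \(N\), not of \(\abs{H_4^{\sim}}\). (Compare \(h^0(mp)\) for a non-Weierstrass point \(p\) on a genus-two curve: triviality at low levels says nothing about higher multiples.) So you must show \(\abs{mH_4^{\sim}}=\{mH_4^{\sim}\}\) for all \(m\geq 1\), and here your mechanism breaks down: for a degree-\(m\) form \(f(l_1,\dots,l_4)\) with \(m\geq 2\), distinct monomials can have equal order along \(A_1^{\sim}\) and cancellation can raise the order — for instance \(l_2^2-l_1l_3=m_1^3m_3m_4^2\) has order \(3\) along \(A_1\), while both monomials have order \(2\), so the ultrametric equality fails. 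The repair is a degree count: \(\varphi^{*}f\) is a form of degree \(3m\) in \(m_1,\dots,m_4\), hence vanishes along the hyperplane \(A_1\) to order at most \(3m\), with equality only if \(\varphi^{*}f=c\,m_1^{3m}=c\,\varphi^{*}(l_4^{m})\), i.e.\ \(f=c\,l_4^{m}\). This is essentially the computation the paper carries out in \cref{clm:kodaira_dim}; once you add it, your argument closes.
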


	\begin{proof}[Proof of the \cref{clm:eff_w}]
		Pick a Weil divisor \(D\geq 0\) on \(W\).
		We can write
		\[
			\rho^{*}\rho_{*}D=D+\alpha A_1^{\sim}
		\]
		for some \(\alpha\in \mathbb{Z}\).
		Letting \(\beta=\deg\rho_{*}D\) and noting \(\rho_{*}D\sim \beta H_4\),
		we get
		\[
			\beta H_4^{\sim}+3\beta A_1^{\sim} \sim D + \alpha A_1^{\sim}.
		\]
		Then the left side of
		\begin{align*}
			(\rho^{-1}\varphi)^{*} (\beta H_4^{\sim}+(3\beta-\alpha)A_1^{\sim})
			 & = \varphi^{*}H_4 - (\rho^{-1}\varphi)^{*}\alpha A_1^{\sim} \\
			 & \leq 3A_1 - \alpha A_1
		\end{align*}
		is pseudo-effective on \(X\) which shows
		\[
			\alpha \leq 3.
		\]
		Thus \(3\beta-\alpha\geq 0\) implying the claim.
		Note that if \(\beta=0\),
		then \(D=-\alpha A_1^{\sim}\) with \(-\alpha\geq 0\).
	\end{proof}

	\begin{claim}\label{clm:kodaira_dim}
		The Kodaira dimension \(\kappa(H_4^{\sim})=0\).
	\end{claim}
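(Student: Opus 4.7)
The plan is to prove $\kappa(H_4^{\sim}) = 0$ by establishing the stronger statement $h^0(W, m H_4^{\sim}) = 1$ for every $m \geq 0$. Since $l_4^m$ always pulls back to a section, one has $h^0(m H_4^{\sim}) \geq 1$, so the entire content of the claim lies in the reverse inequality.

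The first step is to reinterpret the section space on $Y$. Using $\rho^* H_4 = H_4^{\sim} + 3A_1^{\sim}$ and the projection formula for the proper birational morphism $\rho$, one has
\[
	H^0(W, m H_4^{\sim}) = \{\, f \in H^0(\mathbb{P}^3_Y, \mathcal{O}(m)) \,:\, \text{ord}_{A_1^{\sim}}(\rho^* f) \geq 3m \,\}.
\]
Then I would transfer the valuation to $X$: by construction, $\rho$ is the extremal extraction of precisely the divisorial valuation on $k(Y) = k(X)$ associated to the hyperplane $A_1 \subseteq X$, so under $\psi = \rho^{-1}\varphi : X \dashrightarrow W$ the two valuations agree, and the defining condition becomes $\text{ord}_{A_1}(\varphi^* f) \geq 3m$, i.e.\ the $m_1$-adic order of $\varphi^* f \in k[m_1, m_2, m_3, m_4]$ is at least $3m$.

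The heart of the argument is then a one-line homogeneity check. From the explicit formulas
\[
	\varphi^* l_1 = m_1^2 m_4, \quad \varphi^* l_2 = -m_1 m_2 m_4, \quad \varphi^* l_3 = m_4(m_2^2 - m_1 m_3), \quad \varphi^* l_4 = m_1^3,
\]
the pullback $\varphi^* f$ of a degree-$m$ homogeneous polynomial $f$ on $Y$ is homogeneous of degree $3m$ in $(m_1, m_2, m_3, m_4)$. Requiring every monomial of $\varphi^* f$ to have $m_1$-exponent $\geq 3m$ while its total degree equals $3m$ forces every monomial to be a scalar multiple of $m_1^{3m}$, so $\varphi^* f = c \cdot m_1^{3m} = \varphi^*(c \cdot l_4^m)$ for some $c \in k$. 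Since $\varphi$ is dominant, the four polynomials $\varphi^* l_i$ are algebraically independent, so the induced graded ring map $\varphi^* : k[l_1, \ldots, l_4] \to k[m_1, \ldots, m_4]$ is injective on polynomials. This upgrades the identity to $f = c \cdot l_4^m$. Hence $H^0(W, m H_4^{\sim}) = k \cdot l_4^m$ is one-dimensional for every $m \geq 0$, giving $\kappa(H_4^{\sim}) = 0$.

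The only nontrivial step I anticipate is justifying cleanly the identification $\text{ord}_{A_1^{\sim}}(\rho^* f) = \text{ord}_{A_1}(\varphi^* f)$; once the two are recognized as the same divisorial valuation on the common function field (which is built into the construction of $W \to Y$ as the extremal extraction of the $A_1$-valuation), the rest is a degree count using the homogeneity of $\varphi^* f$. It is worth noting that this argument does not require Claim~\ref{clm:eff_w} on the effective cone; it uses only the explicit description of $\varphi$ and the defining property of $\rho$.
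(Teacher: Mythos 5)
Your proof is correct. The identification $H^0(W,mH_4^{\sim})=\{f\in H^0(\mathcal{O}_{\mathbb{P}^3}(m)) : \ord_{A_1^{\sim}}(\rho^*\operatorname{div}f)\geq 3m\}$ is right (the only exceptional divisor of $\rho$ is $A_1^{\sim}$, which appears with coefficient $3$ in $\rho^*H_4$ and coefficient $0$ in $H_4^{\sim}$), the transfer of the valuation to $X$ is legitimate since $A_1^{\sim}$ and $A_1\subseteq X$ realise the same divisorial valuation on the common function field and the multiplicity of a pulled-back Cartier divisor along a fixed divisorial valuation is model-independent, and the degree count $\deg\varphi^*f=3m$ together with $m_1^{3m}\mid\varphi^*f$ forces $\varphi^*f=c\,m_1^{3m}$, whence $f=c\,l_4^m$ by injectivity of $\varphi^*$ on polynomials.

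The core computation is the same as the paper's: in both arguments a degree-$m$ hypersurface on $Y$ whose $\varphi$-pullback vanishes to order $3m$ along $A_1$ is forced, by comparing the order of vanishing with the total degree $3m$, to pull back to $c\,m_1^{3m}$. The difference is in the packaging. The paper argues by contradiction: assuming $\kappa\geq 1$ it produces $M\sim eH_4^{\sim}$ with $H_4^{\sim}\not\subseteq\Supp M$, pushes down to $Y$, and derives a contradiction from the fact that $H_1$ and $H_4$ are the only divisors contracted by $\varphi^{-1}$ while $\varphi^*H_1=2A_1+A_4\neq 3A_1$. You instead compute $h^0(W,mH_4^{\sim})=1$ directly, replacing the inventory of $\varphi^{-1}$-exceptional divisors by the injectivity of $\varphi^*$ on the polynomial ring. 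Your route is slightly cleaner: it proves the stronger statement that every pluri-section is a power of $l_4$, and it avoids the mild subtlety of extracting a member of $|eH_4^{\sim}|$ whose support omits $H_4^{\sim}$. What it costs is nothing beyond the one point you already flagged, namely justifying $\ord_{A_1^{\sim}}(\rho^*f)=\ord_{A_1}(\varphi^*f)$, and your justification of that point is the correct one.
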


	\begin{proof}[Proof of the \cref{clm:kodaira_dim}]
		Assume not.
		Then \(H_4^{\sim}\) is movable and there exist \(e\in\mathbb{N}\) and \(M\geq 0\)
		such that \(e H_4^{\sim}\sim M\)
		where \(H_4^{\sim}\not\subseteq \Supp M\).
		Then from
		\[
			\rho^{*}\rho_{*}H_4^{\sim} = \rho^{*} H_4 = eH_4^{\sim}+3A_1^{\sim}
		\]
		we get
		\[
			\rho^{*}\rho_{*}M = M+3eA_1^{\sim}.
		\]
		So
		\[
			\varphi^{*}(\rho_{*}M) = R + 3e A_1 \quad \text{for some } R\geq 0.
		\]
		Moreover, \(e=\deg\rho_{*}M\),
		so \(\rho_{*}M\) is a hypersurface defined by a polynomial \(\Pi\) of degree \(e\).
		Thus \(\varphi^{*}(\rho_{*}M)\) is given by the polynomial
		\[
			\Pi(m_1^2m_4,-m_1m_2m_4,m_2^2m_4-m_1m_3m_4,m_1^3)
		\]
		of degree \(3e\).
		But then \(R=0\) and \(\varphi^{*}(\rho_{*}M)=3e A_1\).

		Since \(H_1,H_4\) are the only exceptional divisors of \(\varphi^{-1}\)
		and since \(H_4\not\subseteq \Supp\rho_{*}M\),
		we have \(\rho_{*}M=e H_1\).
		Thus \(\varphi^{*}\rho_{*}M=2e A_1 + e A_4\), a contradiction.
		So we have proved the claim.
	\end{proof}

	Now run the \(H_4^{\sim}\)-MMP and
	let \(T\) be the resulting model.
	We can run this MMP as \(W\) is of Fano type.
	Since \(W\) is \(\mathbb{Q}\)-factorial of Picard number two \cite[Lemma~4.6]{birkar2016effectivity},
	\(T\) is \(\mathbb{Q}\)-factorial of Picard number one as \(H_4^{\sim}\) is contracted,
	given \(\kappa(H_4^{\sim})=0\).
	In particular, \(T\) is a (klt) Fano variety.

	By construction,
	\(W\) contains \emph{big open subsets} of both \(U\) and \(V\), i.e., open subset with complement of codimension \(\geq 2\):
	\begin{itemize}
		\item for \(U\) we use the fact that \(U\dashrightarrow W\) contracts no divisor;
		      recall that \(U\setminus A_1\xrightarrow{\sim}V\setminus H_1\)
		      and \(A_1^{\sim}\subseteq W\);
		\item for \(V\) it is clear as \(W\to Y\) is a birational morphism.
	\end{itemize}
	In fact the complement in \(W\) of the union the two big open sets
	contains only one prime divisor, \(H_4^{\sim}\).
	Now \(W\dashrightarrow T\) is an isomorphism outside \(H_4^{\sim}\) and
	\(H_4^{\sim}\) does not intersect the mentioned big open sets.
	Thus \(T\) also contains big open subsets of \(U\) and \(V\).
	Moreover, the complement in \(T\) of the union of two big open sets, is of codimension \(\geq 2\).

	Therefore \(T\) is isomorphic to the fibre \(F_3\) of \(\pi\) because both \(T,F_3\) are normal varieties
	with isomorphic open subsets with codimension \(\geq 2\) complements.
	Now since \(F_3\) has klt singularities and \(K_{F_3}\) is Cartier by \cref{thm:main_rank_2_on_curves} (3),
	it has canonical singularities,
	along a copy of \(\mathbb{P}^1\) as in Section~\ref{sub:alg_treat_degree_3}.
\end{proof}


\printbibliography

\end{document}